\title{A variational representation for $G$-Brownian functionals}
\author{Emi Osuka\thanks{Mathematical Institute, 
Tohoku University, Aoba-ku, Sendai 980-8578, Japan.}}
\date{\empty}
\numberwithin{equation}{section}
\theoremstyle{plain}
	\newtheorem{thm}{Theorem}[section]
	\newtheorem{prop}[thm]{Proposition}
	\newtheorem{lem}[thm]{Lemma}
\theoremstyle{definition}
\theoremstyle{remark}
	\newtheorem{rem}[thm]{Remark}
\newcommand	\tref	{Theorem~\ref}
\newcommand	\pref	{Proposition~\ref}
\newcommand	\lref	{Lemma~\ref}
\newcommand	\cref	{Corollary~\ref}
\newcommand	\rref	{Remark~\ref}
\newcommand	\A	{\mathbb{A}}		
\newcommand	\E	{\mathbb{E}}		
		\newcommand	\bH	{\mathbb{H}}
		\newcommand	\N	{\mathbb{N}}
		\newcommand	\R	{\mathbb{R}}
		\newcommand	\X	{\mathbb{X}}
\newcommand	\calA	{\mathcal{A}}		\newcommand	\calB	{\mathcal{B}}
\newcommand	\calE		{\mathcal{E}}		\newcommand	\calF		{\mathcal{F}}
		\newcommand	\calH		{\mathcal{H}}
		\newcommand	\calL		{\mathcal{L}}
		\newcommand	\calN		{\mathcal{N}}
		\newcommand	\calP		{\mathcal{P}}
	\newcommand	\calX	{\mathcal{X}}
\newcommand	\qv	[1]	{\langle #1 \rangle}
\newcommand	\fp	[2]	{\frac{\partial #1}{\partial #2}}
\newcommand	\bt		{\textbf}
\newcommand	\bm		{\boldsymbol}
\newcommand	\ddd		{, \dots ,}
\newcommand	\n			{\text}
\newcommand	\one		{\normalfont{\mbox{1}\hspace{-0.25em}\mbox{l}}}
\newcommand	\qd		{\quad}
\newcommand	\qn		{\quad\text}
\newcommand	\tr		{\mathrm{tr}}
\newcommand	\ve		{\varepsilon}
\newcommand	\vp		{\varphi}
\renewcommand	\le	{\leqslant}
\renewcommand	\ge	{\geqslant}
\renewcommand	\hat	{\widehat}
\newcommand	\ol		{\bar}
\newcommand	\OL		{\overline}
\newcommand	{\bLip}	{C_{b,Lip}}
\newcommand	{\sbl}	{\mathcal{S}_{b,Lip}}
\newcommand	{\cA}	{\calA_{0,1}^{\Theta}}
\newcommand	{\lip}		{\mathrm{Lip}}
\newcommand	{\MG}	{M^2_G (0,1)}
\newcommand	{\MGR}	{M^2_G (0,1 ; \R ^d)}
\begin{document}

\maketitle

\begin{abstract}
The purpose of this paper is to establish a variational representation
\begin{align}
	\log \E \left[ e^{f(B)} \right]
	=
	\sup_h
	\E 
	\left[ 
	f \left( B + \int _0^{ \cdot } d \qv{B}_s \, h_s \right) 
	-\frac{1}{2} \int _0^1 h_s \cdot \left( d \qv{B}_s \, h_s \right) 
	\right]
\end{align}
for functionals of the $d$-dimensional $G$-Brownian motion $B$.
Here $\E$ is a sublinear expectation called $G$-expectation,
$f$ is any bounded function in the domain of $\E$ 
mapping $C ([0,1] ; \R^d)$ to $\R$,
the integrals are taken with respect to the quadratic variation of $B$,
and the supremum runs over all $h$'s for which these integrals are well-defined.
As an application, we give another proof of the results obtained by Gao-Jiang (2010),
large deviations for $G$-Brownian motion.

\end{abstract}

\footnote{e-mail: sa9m06@math.tohoku.ac.jp}
\footnote{phone: +81 227956401, fax: +81 227956400}
\footnote{\bt{Keywords}: 
$G$-Brownian motion, Variational representation, Large deviations, Girsanov's formula}
\footnote{\bt{Mathematical Subject Classifications (2012)}: 60F10, 60H30}

\section{Introduction}\label{;intro}

This paper is concerned with $G$-Brownian motion introduced by S.~Peng.
$G$-Brownian motion can be regarded 
as a Brownian motion with an uncertain variance process.
One of its features is that, 
while the classical Brownian motion is defined on a probability space, 
$G$-Brownian motion is defined 
on a sublinear expectation space $(\Omega , \calH , \E)$.
Here $\Omega$ is a given set and $\calH$ 
is a vector lattice of real-valued functions on $\Omega$ 
containing $1$, which is the domain of a sublinear expectation $\E$.
Peng \cite{Peng;06,Peng;08b} constructed 
a sublinear expectation space on which the canonical 
process of the space $\Omega = C ([0,1] ; \R^d)$ 
of continuous paths starting from 0 
becomes a $G$-Brownian motion.
The sublinear expectation in this space is called $G$-expectation.
Also defined in \cite{Peng;06,Peng;08b} 
were the quadratic variation process of $G$-Brownian motion, 
and stochastic integrals with respect to $G$-Brownian motion 
and its quadratic variation for a certain class of stochastic processes.
It is known that any sublinear expectation can be represented 
as a supremum of linear expectations, referred to as an upper expectation.
Recently, L.~Denis, M.~Hu and S.~Peng gave 
a concrete upper expectation representation 
for $G$-expectation in \cite{Denis;10}.
Through the upper expectation, a related capacity is defined, 
and it plays a similar role to a probability measure in the classical stochastic analysis.
For instance, Gao-Jiang \cite{Gao;10} formulated and proved large deviation principles 
for $G$-Brownian motion under this capacity.

In this paper, we establish a variational representation 
for functionals of $G$-Brownian motion:
\begin{align}
	\log \E \left[ e^{f(B)} \right]
	=
	\sup _{h \in ( \MG )^d}
	\E 
	\left[ 
	f \left( B + \int _0^{\cdot} d \qv{B}_s \, h_s \right) 
	- \frac{1}{2} \int _0^1 h_s \cdot \left( d \qv{B}_s \, h_s \right) 
	\right] . 
	\label{eq:intro}
\end{align}
Here $\E$ is $G$-expectation, 
$B$ is the $d$-dimensional $G$-Brownian motion,
$f$ is any bounded function in the domain of $G$-expectation 
that maps $C ([0,1] ; \R^d)$ to $\R$, 
the integrals are taken with respect to 
the quadratic variation $\qv{B}$ of $G$-Brownian motion, 
and the supremum runs over all $\R^d$-valued processes $h$ 
for which these integrals are well-defined. 
Precise definitions will be seen in Section~\ref{;prem}.

One of our motivations for this representation 
comes from large deviation principles for $G$-Brownian motion.
It is well known that, on a probability space, 
the large deviation principle for a given family 
of random variables is equivalent to its Laplace principle. 
In \cite{Boue;98}, M.~Bou\'e and P.~Dupuis 
established a variational representation for functionals 
of Brownian motion and showed its usefulness 
in the derivation of Laplace principles when the family 
of concern consists of functionals of Brownian motion.
Our variational representation \eqref{eq:intro} 
has the same application in the framework of $G$-expectation space; 
indeed, it is also true that the Laplace principle formulated under $G$-expectation 
is equivalent to the large deviation principle 
formulated under the capacity, and the representation \eqref{eq:intro} 
can be used to derive Laplace principles for families of random variables 
given as functionals of $G$-Brownian motion.
As an illustration, we prove the Laplace principles for the families 
$\{ \sqrt{\ve} B ; \ve>0 \}$ and $\{ ( \sqrt{\ve} B , \qv{B} ) ; \ve >0 \}$.
Large deviations for these families were originally obtained by Gao-Jiang \cite{Gao;10};
they employed a discretization technique.
Our variational representation gives another proof.

The proof of the representation \eqref{eq:intro} 
is split into the derivations of the lower and upper bounds. 
By virtue of  approximating method we employ, 
proofs of these bounds are reduced to showing their validity 
for a particular class of functions $f$, 
namely the class of bounded Lipschitz cylinder functions.
To obtain the lower bound, Girsanov's formula 
for $G$-Brownian motion in \cite{Osuka;11} 
allows us to use a similar argument to that in Bou\'e-Dupuis \cite{Boue;98}.
The proof of the upper bound is in the same spirit as Zhang \cite{Zhang;09}, 
which extended the representation of Bou\'e-Dupuis 
to the framework of an abstract Wiener space 
as simplifying the proof of the upper bound by using the Clark-Ocone formula; 
we use a type of the Clark-Ocone formula under $G$-expectation (\lref{lem:u2}) 
to prove the upper bound. 
Prior to the proof of the representation \eqref{eq:intro}, 
the well-definedness of the right-hand side has also to be verified,
that is, it is needed to show that for any bounded function $f$ 
in the domain of $G$-expectation, 
functionals of the form $f ( B + \int_0^{\cdot} d \qv{B}_s \, h_s )$ with 
$h$ as described above are again in the domain.
A key is to establish an absolute continuity between $B$ and 
$B + \int_0^{\cdot} d \qv{B}_s \, h_s$ under the capacity (\pref{;abs}), 
which is done by using relative entropy estimates as given in Bou\'e-Dupuis 
\cite{Boue;98} and also by using Girsanov's formula for $G$-Brownian motion.

We give an outline of the paper.
In Section~\ref{;prem}, we introduce necessary notions 
and related results as preliminaries:
the construction of $G$-expectation, 
stochastic integrals for $G$-Brownian motion,
the upper expectation for $G$-expectation due to Denis-Hu-Peng \cite{Denis;10},
and Girsanov's formula for $G$-Brownian motion obtained by \cite{Osuka;11}.
Main results of this paper are stated and proved in Section~\ref{;pr2};
we verify the well-definedness of the right-hand side 
of \eqref{eq:intro} in Subsection~\ref{;pr1},
and prove the representation \eqref{eq:intro} 
in Subsections~\ref{;prflb} and \ref{;prfub}. 
In Section~\ref{;ld}, we derive large deviation principles for $G$-Brownian motion 
as an application of our representation. 
In Section~\ref{s:rem}, we show an absolute continuity relationship between $B$ and 
$B + \int_0^{\cdot} d \qv{B}_s \, h_s$ under the capacity.

Throughout this paper, for a probability measure $P$, 
$E_P$ denotes the expectation with respect to $P$.
For a real-valued function $f$ on any metric space 
$( \X , d )$, we denote by $\lip (f)$ the Lipschitz constant of $f$: 
\begin{align}
	\lip (f)
	:=
	\sup _{
	\begin{subarray}{c}
		x , y \in \X\\
		x \neq y
	\end{subarray}
	}
	\frac{ |f(x)-f(y)| }{ d(x,y) }. 
\end{align}
Other notation will be introduced as needed.

\section{$\bm{G}$-Brownian motion and related stochastic analysis}\label{;prem}

In this section, we briefly recall from \cite{Peng;06,Peng;08b,Denis;10} 
some notions and related results about 
$G$-Brownian motion and $G$-expectation space. As preparing some 
necessities such as the notions of $G$-stochastic integrals and 
$G$-martingales, we then introduce Girsanov's formula for multidimensional 
$G$-Brownian motion established in \cite{Osuka;11}. 

\subsection{$\bm{G}$-expectation space and the related capacity}

Let $\Omega $ be the set of $\R^d$-valued continuous functions 
$\omega :[0,1]\to \R ^{d}$ with $\omega_0 =0$, equipped with the distance 
\begin{align}\label{eq:rho}
	\rho (\omega^1 , \omega^2) := \sup _{0\le t\le 1} 
	| \omega^1_t - \omega^2_t | , \quad \omega ^{1}, \omega ^{2}\in \Omega . 
\end{align}
For each $t \in [0,1]$, we also set 
$\Omega_t := \{ \omega _{\cdot \wedge t} : \omega \in \Omega \}$. 
We denote by $\calB ( \Omega )$ (resp.~by $\calB ( \Omega_t )$)
the associated Borel $\sigma$-algebra of $\Omega$ (resp.~of $\Omega_t$).
In the sequel we denote by $B=\{ B_t ; 0 \le t \le 1 \}$ the canonical process in
$\Omega$: $B_t(\omega ):=\omega _{t},\,0\le t\le 1, \omega \in \Omega $. 
For each $t\in [0,1]$, let $\bLip (\Omega _{t})$ be 
the set of bounded Lipschitzian cylinder functionals on $\Omega _{t}$:
\begin{align*}
	\bLip ( \Omega_t )
	:= \left\{ 
	\vp ( B_{t_1} \ddd B_{t_n}) : \, n \in \N ,~ 
	t_1 \ddd t_n \in [0,t] ,~ \vp \in \bLip ( (\R^d)^ n ) 
	\right\} ;
\end{align*}
when $t=1$, we simply write $\bLip (\Omega )$.
Here and below, $\bLip (\R ^{m})$ denotes 
the set of bounded Lipschitz functions on $\R ^{m}$.
Let $\R ^{d\times d}$ be the set of $d\times d$ matrices and 
$\Theta $ a non-empty, bounded and closed subset of $\R ^{d\times d}$;
the set $\Theta $ is a collection of parameters 
that represents the variance uncertainty of $G$-Brownian motion.
We associate $\Theta$ with two constants $\sigma_1 ,\sigma_0 \ge 0$ via
\begin{align}\label{;variance}
	\sigma_0 ^2=\inf _{\gamma \in \Theta }
	\inf _{
	\begin{subarray}{c}
	x\in \R ^{d}\\
	|x|=1
	\end{subarray}
	}
	x\cdot \gamma \gamma ^{*}x, 
	&& 
	\sigma_1 ^2=\sup _{\gamma \in \Theta }
	\sup _{
	\begin{subarray}{c}
	x\in \R ^{d}\\
	|x|=1
	\end{subarray}
	}
	x\cdot \gamma \gamma ^{*}x. 
\end{align}
For a $d\times d$ 
symmetric matrix $A$, define 
\begin{flalign}\label{;gen}
	G(A)
	:= 
	\frac{1}{2}
	\sup_{\gamma \in \Theta}
	\tr \left[ A \gamma \gamma ^* \right] . 
\end{flalign}
For a given $\vp \in \bLip (\R ^{d})$, 
we denote by $u_{\vp }$ the unique viscosity 
solution to the following nonlinear partial differential equation 
called the $G$-heat equation:
\begin{align}\label{;gheat}
	\begin{cases}
	\dfrac{\partial u}{\partial t}-G\left( D^2u\right) =0 & 
	\text{in }(0,1)\times \R ^{d}, \\
	u|_{t=0}=\vp & \text{in } \R ^{d}, 
	\end{cases}
\end{align}
where $D^{2}u=\left( \fp{^2u}{ x^{i} \partial x^{j} } \right) _{i,j=1}^{d}$ 
is the Hessian matrix of $u$. 

\begin{rem}\label{hitaika}
	For the existence and uniqueness of the viscosity solution to \eqref{;gheat}, 
	we refer to \cite[Section~C.3]{Peng;10}. 
	If $\sigma_0 >0$, then the solution to 
	\eqref{;gheat} becomes a $C^{1,2}$-solution. 
\end{rem}

It is shown in \cite{Peng;06,Peng;08b} that there exists a unique sublinear 
expectation functional $\E :\bLip (\Omega )\to \R $ 
that possesses the following two properties:
\begin{enumerate}[(i)]{}
	\item for all $0\le s<t\le 1$ and $\vp \in \bLip (\R ^{d})$, 
		\begin{align}
			\E[ \varphi( B_{t}-B_s )] = \E [\varphi(B_{t-s})] = u_{\vp }(t-s,0); 
		\end{align}
	\item for all $n\in \N $, $0\le t_{1}\le \dots \le t_{n}\le 1$ 
		and $\psi \in \bLip ((\R ^{d})^{n})$, 
		\begin{align}\label{e:GE}
			\E \left[ \psi (B_{t_1} , \ldots , B_{t_n})\right] = 
			\E \left[ \psi _1(B_{t_1} , \ldots , B_{t_{n-1}})\right] , 
		\end{align}
		where $\psi _{1}:(\R ^{d})^{n-1}\to \R $ is defined by 
		\begin{align*}
			\psi _{1}(x_1 ,\ldots , x_{n-1})
			=\E \left[ \psi (x_1 ,\ldots , x_{n-1} , B_{t_n}^{t_{n-1}}+ x_{n-1})
			\right] 
		\end{align*}
	with $B^{s}_{t}:=B_{t}-B_{s}$ for $0\le s\le t\le 1$. 
\end{enumerate}
For $t_{k-1} \le t < t_k$, the related conditional expectation 
of $\vp ( B_{t_1} , \dots , B_{t_n} )$ 
on $\bLip (\Omega_t)$ is defined by
\begin{align}\label{e:CGE}
	\E_t [ \vp ( B_{t_1} , \dots , B_{t_n}) ] :=\vp _{n-k} (B_{t_1} , \dots , 
	B_{t_{k-1}} , B_t),
\end{align}
where $\vp _{n-k} (x_1 , \dots , x_{k-1} , x_k) = 
\E [\vp (x_1 , \dots , x_{k-1 }, B^t_{t_k} +x_k , \dots , B^t_{t_n} + x_k )]$. 
The completion of $\bLip (\Omega _{t})$ with respect to the norm 
$\E[| \cdot|]$ is denoted by $\calL^1_G (\Omega _{t})$;
we simply write $\calL^{1}_{G}(\Omega )$ for $\calL^{1}_{G}(\Omega _{1})$.
The functional $\E $ (resp.~$\E _{t}$) is then uniquely extended 
to a sublinear expectation (resp.~a conditional sublinear expectation) 
on $\calL^1_G (\Omega )$.
This extension is called $G$-expectation (resp.~conditional $G$-expectation) 
and will still be denoted by $\E $ (resp.~by $\E _{t}$) in the sequel.
The triplet $(\Omega ,\calL^1_G (\Omega ),\E )$ is called $G$-expectation space,
on which the canonical process $B$ is a $d$-dimensional $G$-Brownian motion;
for more details, we send the reader to \cite{Peng;10} and references therein. 

Let $W= \{ W_{t}= ( W^{1}_{t},\dots ,W^{d}_{t} ) ^{*};t\ge 0 \}$,
together with a probability measure $P $ defined on a suitable measurable space,
be a $d$-dimensional Brownian motion starting from the origin: 
$P (W_{0}=0)=1$.
We denote by $\{ \calF_t \} _{t \ge 0}$ its augmented filtration: 
\begin{align*}
	\calF_t := \sigma ( W_s , 0 \le s \le t) \vee \calN , \quad t \ge 0 , 
\end{align*}
where $\calN$ is the collection of $P $-null events. 
We denote by $\cA$ the set of $\Theta $-valued $\{ \calF_t \} $-progressively 
measurable processes over the interval $[0,1]$. 
For each $\theta \in \cA$,
we denote by $P_{\theta}$ the law of the process 
\begin{align}
	\int_0^{t} \theta _s \,dW_s, \quad 0\le t\le 1,
\end{align}
induced on $\Omega $.
Now we define the capacity $c: \calB (\Omega ) \to [0,1]$ by
\begin{align}
	c \left( A \right) 
	:= 
	\sup_{\theta \in \cA} 
	P_{\theta} (A) 
	\qn{for }A \in \calB (\Omega ) .
	\label{c}
\end{align}
We list some capacity-related terms:
(i)~A set $A\in \calB (\Omega )$ is called polar if $c(A)=0$;
(ii)~a property is said to hold quasi-surely (q.s.) 
if it holds outside a polar set; 
(iii)~a mapping $X : \Omega \to \R $ is said to be quasi-continuous (q.c.) 
if for all $\ve >0$, there exists an open set $O$ with 
$c(O) < \ve$ such that $X|_{O^c}$ is continuous;
(iv)~we say that $X : \Omega \to \R$ has a q.c.\ version 
if there exists a q.c.\ function $Y : \Omega \to \R$ with $X=Y$ q.s.
For each $t\in [0,1]$, we denote by $L^0 (\Omega _{t})$ 
the set of $\calB (\Omega _{t})$-measurable real-valued functions,
and write $L^{0}(\Omega )$ for $L^{0}(\Omega _{1})$.
For each $X \in L^0 (\Omega )$ such that $E_{P_{\theta}} [X]$ exists 
for all $\theta \in \cA$, set
\begin{align}
	\ol{\E} [X] 
	:= 
	\sup_{\theta \in \cA} 
	E_{P_{\theta}} [X]. 
\end{align}
The following characterization of $G$-expectation space 
is given by \cite[Theorem~54]{Denis;10}: 
\begin{align}
	&\calL^1_G (\Omega _{t})
	=
	\left\{ 
	X \in L^0 (\Omega _{t}) : \n{ $X$ has a q.c.\ version, }
	\lim_{n \to \infty} \ol{\E} [|X| \one_{ \{ |X|>n  \} }] =0 
	\right\},
	\label{;char1}\\
	&\E [X] 
	= 
	\ol{\E}[X] 
	\qd \text{for all } X \in \calL^1_G (\Omega ). 
	\label{;char2}
\end{align}
We refer to the latter identity \eqref{;char2} 
as the upper expectation representation for $G$-expectation.
If we denote by $\calP $ the closure of the family 
$\{ P_{\theta} : \theta \in \cA\}$ 
with respect to the topology of weak convergence,
then the same conclusion as \eqref{;char2} holds for the upper expectation 
relative to $\calP$ \cite[Theorem~52]{Denis;10}:
for each $X \in L^0 (\Omega )$ such that $E_{P} [X]$ 
exists for all $P \in \calP $,
set $\hat{\E}[X] := \sup_{P \in \calP } E_{P} [X]$. 
Then
\begin{align}\label{;char3}
	\E [X] 
	= 
	\hat{\E}[X] 
	\qn{for all } X \in \calL^1_G (\Omega ). 
\end{align}
Let us consider another capacity
\begin{align}\label{hatc}
	\hat{c} \left( A \right) 
	:= 
	\sup_{P \in \calP} P ( A ) , 
	\qd A \in \calB ( \Omega ) ,
\end{align}
associated to the upper expectation representation \eqref{;char3}. 
If $N \in \calB ( \Omega )$ is polar under the capacity $c$, 
then $N$ is also polar under the capacity $\hat{c}$.
Indeed, since the indicator function $\one_N$ 
is equal to $0$ q.s.\ under the capacity $c$,
we have in particular that $\one_N (B) \in \calL^1_G ( \Omega )$ 
by \eqref{;char1}.
Then by \eqref{;char3},
we have $\hat{c} \left( N \right) = \hat{\E} \left[ \one_N \right] =0$.
This shows that the quasi-sureness under $\hat{c}$ 
is equivalent to that under $c$. 
Thus we do not need to distinguish these two
and simply write q.s. 

\begin{rem}\label{nimed}
	If variational representations hold 
	under the laws of continuous martingales, 
	then the representation \eqref{eq:intro} is immediate 
	from those and \eqref{;char2}; 
	however, as far as we know, such representations 
	have not been obtained in any existing literature. 
\end{rem}

\subsection{Girsanov's formula for $\bm{G}$-Brownian motion}

In order to introduce the statement of Girsanov's formula for 
$G$-Brownian motion from \cite{Osuka;11},
we recall some notions first. 

For each $p\ge 1$ and $t\in [0,1]$,
we denote by $\calL^p_G (\Omega _{t})$ 
the completion of $\bLip (\Omega _{t})$ 
with respect to the norm $\E [| \cdot |^p]^{1/p}$.
When $t=1$, we drop it from notation.
Let
\begin{multline}\label{;mpg}
	M^{p,0}_G (0,1) 
	:=
	\Biggl\{
	\sum_{k=0}^{n-1} \xi_k \one_{[t_k , t_{k+1})} :
	n \in \N ,~ 0= t_0 < t_1 < \dots < t_{n} =1,\\
	\xi_k \in 
	\calL^p_G (\Omega _{t_k}),~k=0,\ldots ,n-1
	\Biggr\}. 
\end{multline}
We denote by $M^p_G (0,1)$ the completion of 
$M^{p,0}_G (0,1)$ under the norm 
\begin{align}
	\| \eta \|_{M^p_G (0,1)}
	:=
	\left\{ 
	\int _{0}^{1} \E 
	\left[ 
	\left| \eta_{t} \right| ^{p} d t
	\right] 
	\right\} ^{1/p}. 
\end{align}
To $\eta =( \eta^i )_{i=1}^d \in ( M^p_G (0,1) )^d$, 
we assign the norm $\| \eta \|_{M^p_G (0,1 ; \R^d )}$ by
\begin{align}
	\| \eta \|_{M^p_G (0,1 ; \R^d )}
	:=
	\| | \eta | \|_{ M^p_G (0,1) }. 
\end{align}

For every $h \in ( \MG )^d$ and $t \in [0,1]$,
the It\^o integral for $G$-Brownian motion
\begin{align}
	\int_0^t h_s \cdot dB_s := \sum_{i=1}^d \int_0^t h^i_s \, dB^i_s 
\end{align}
is defined as an element of $\calL^2_G ( \Omega_t )$.
Here $B^i$ denotes the $i$-th coordinate of $B$.
For $i,j=1 \ddd d$, the mutual variation of $B^i$ and $B^j$
\begin{align}
	\qv{B^i , B^j}_t 
	:= 
	B^i_t B^j_t - \int_0^t B^i_s \, dB^j_s
	- \int_0^t B^j_s \, dB^i_s
\end{align}
is also defined since $B^i$, $B^j$ belong to $\MG$.
We denote the quadratic variation of $B$
by $\qv{B}_t := ( \qv{ B^i , B^j}_t )_{i,j=1}^d$, $0 \le t \le 1$.
In addition, for each $\eta \in ( M^1_G (0,1) )^d$, we define
\begin{align}
	\int_0^t d\qv{B}_s \, \eta_s
	:= 
	\left( 
	\sum_{i=1}^d \int_0^t \eta^i_s \, d\qv{B^1 , B^i}_s
	\ddd 
	\sum_{i=1}^d \int_0^t \eta^i_s \, d\qv{B^d , B^i}_s 
	\right) ^*
\end{align}
as an element of $\left( \calL^1_G (\Omega_t) \right)^d$.
Note that if $\eta^1, \eta^2 \in \MG$, 
then $\eta^1 \eta^2 \in M^1_G (0,1)$.
For each $h \in ( \MG )^d$, we write
\begin{align}
	\int_0^t h_s \cdot \left( d \qv{B}_s \, h_s \right)
	:= 
	\sum_{i,j=1}^d 
	\int_0^t h^i_s h^j_s \, d \qv{B^i , B^j}_s .
\end{align}

\begin{rem}
	From the upper expectation representation \eqref{;char2} 
	and the definition of $P_{\theta}$, $\theta \in \cA$, 
	it is seen that for every $\theta \in \cA$,
	\begin{align}\label{;qv}
		\frac{ d \qv{B}_s }{d s} 
		\in \{ \gamma \gamma^* : \gamma \in \Theta \} 
		\qn{for a.e.\ } s \in [0,1] ~ P_{\theta} \n{-a.s.}
	\end{align}
\end{rem}

An $\R$-valued process $\eta=\{ \eta_t ; 0 \le t \le 1 \}$ 
defined on $( \Omega , \calL^1_G (\Omega) , \E )$ is called a $G$-martingale 
if $\eta_t \in \calL^1_G (\Omega_t)$ for every $0 \le t \le 1$ and
its conditional $G$-expectations satisfy
\begin{align}
	\E_s [ \eta_t ] = \eta_s 
	\qn{in } \calL^1_G ( \Omega_s )
\end{align}
for all $0 \le s \le t$;
$\eta$ is called a symmetric $G$-martingale 
if both $\eta$ and $-\eta$ are $G$-martingales.

With these notions, we now introduce Girsanov's formula 
for $G$-Brownian motion.
Let $h \in ( \MG )^d$.
We define, for $0 \le t \le 1$,
\begin{align}
	&D^{(h)}_t 
	:= 
	\exp 
	\left(
	\int_0^t h_s \cdot d B_s
	- \frac{1}{2} 
	\int_0^t h_s \cdot \left( d \qv{B}_s \, h_s \right) 
	\right), 
	\label{Dh}\\
	&\ol{B}_t 
	:= 
	B_t - \int_0^t d \qv{B}_s \, h_s,
\end{align}
and we set
\begin{align}
	C^{(h)}_{b,Lip} ( \Omega ) 
	:= 
	\{ 
	\vp (\ol{B}_{t_1} \ddd \ol{B}_{t_n} )
	:  n \in \N ,~ t_1 \ddd t_n \in [0,1] ,~ \vp \in \bLip ((\R^d)^n ) 
	\} .
\end{align}

\begin{thm}[\cite{Osuka;11}, Theorem~5.3]\label{;gir}
	Assume that $\sigma_0$ defined by \eqref{;variance} 
	is strictly positive and 
	that $D^{(h)}$ is a symmetric $G$-martingale on 
	$(\Omega , \calL^1_G ( \Omega ) , \E )$.
	Define a sublinear expectation $\E^h$ by
	\begin{align}
		\E^h [X] 
		:= 
		\E [X D^{(h)}_1 ] 
		\qn{for } X \in C^{(h)}_{b,Lip} (\Omega ).
	\end{align}
	Let $\calL^{1, (h)}_G ( \Omega )$ be the completion of 
	$C^{(h)}_{b,Lip} ( \Omega )$ under the norm $\E^h [| \cdot |]$, 
	and extend $\E^h$ to a unique sublinear expectation 
	on $\calL^{1, (h)}_G ( \Omega )$.
	Then the process $\{ \ol{B}_t ; 0 \le t \le 1 \}$ 
	is a $G$-Brownian motion 
	on the sublinear expectation space 
	$( \Omega , \calL^{1, (h)}_G ( \Omega ) , \E^h )$.
\end{thm}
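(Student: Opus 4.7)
The plan is to verify that the shifted process $\ol B$ has the same finite-dimensional $\E^h$-distributions as $B$ does under $\E$, namely
\[
	\E^h [\psi(\ol B_{t_1}, \ldots, \ol B_{t_n})] = \E[\psi(B_{t_1}, \ldots, B_{t_n})]
\]
for every $n \ge 1$, every $0 \le t_1 \le \cdots \le t_n \le 1$, and every $\psi \in \bLip ((\R^d)^n)$. By the characterizing properties (i)--(ii) of $G$-Brownian motion recalled above, this identity (together with its conditional analogue, obtained by running the same argument on a shifted interval and invoking \eqref{e:CGE}) will force $\ol B$ to be a $G$-Brownian motion on $(\Omega, \calL^{1,(h)}_G(\Omega), \E^h)$. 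The main mechanism will be the upper expectation representation \eqref{;char2}, which reduces every $G$-expectation to a supremum of classical expectations over $\{ P_\theta : \theta \in \cA\}$.

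Since $D^{(h)}$ is a symmetric $G$-martingale with $D^{(h)}_0 = 1$, applying \eqref{;char2} to both $D^{(h)}_1$ and $-D^{(h)}_1$ forces $E_{P_\theta}[D^{(h)}_1] = 1$ for every $\theta \in \cA$, so $dQ_\theta := D^{(h)}_1 \, dP_\theta$ is a probability measure. Under $P_\theta$, the canonical process $B$ is a classical continuous martingale with quadratic variation $\int_0^{\cdot} \theta_s \theta_s^* \, ds$, and the $G$-stochastic integrals $\int_0^{\cdot} h_s \cdot dB_s$ and $\int_0^{\cdot} d\qv{B}_s \, h_s$ coincide $P_\theta$-a.s.\ with their classical It\^o and Stieltjes counterparts. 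The classical Girsanov theorem then yields that, under $Q_\theta$, the process $\ol B_t = B_t - \int_0^t d\qv{B}_s \, h_s$ is a continuous martingale whose quadratic variation is still $\int_0^{\cdot} \theta_s \theta_s^* \, ds$. By martingale representation (on a suitable enlargement of the probability space), the law of $\ol B$ under $Q_\theta$ coincides with $P_{\tilde\theta}$ for some $\tilde\theta \in \cA$ with $\tilde\theta \tilde\theta^* = \theta \theta^*$. Combining these ingredients produces
\[
	\E^h [\psi(\ol B_{t_1}, \ldots, \ol B_{t_n})]
	= \sup_{\theta \in \cA} E_{Q_\theta} [\psi(\ol B_{t_1}, \ldots, \ol B_{t_n})]
	= \sup_{\theta \in \cA} E_{P_{\tilde\theta}} [\psi(B_{t_1}, \ldots, B_{t_n})]
	\le \E[\psi(B_{t_1}, \ldots, B_{t_n})].
\]

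The main obstacle will be the reverse inequality, i.e.\ the surjectivity in law of the map $\theta \mapsto \tilde\theta$ into $\cA$. To handle it, for every $\sigma \in \cA$ I would run the Girsanov transformation in reverse, using the symmetric exponential martingale associated with $-h$ under $P_\sigma$, and extract a preimage $\theta \in \cA$. The assumption $\sigma_0 > 0$ is crucial here: it guarantees the non-degeneracy of $\qv{B}$ required to define the inverse shift and the associated reverse density along each $P_\sigma$-trajectory, and (via \rref{hitaika}) that the $G$-heat equation admits a classical solution so that the identification on cylinder functionals can be carried out rigorously. Once the identity above is established on $\bLip$-cylinders, the density of $C^{(h)}_{b,Lip}(\Omega)$ in $\calL^{1,(h)}_G(\Omega)$ propagates the conclusion to the entire space.
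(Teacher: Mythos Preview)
The paper does not contain a proof of this theorem; it is quoted verbatim from \cite{Osuka;11}, Theorem~5.3, and used as a black box throughout. So there is no in-paper argument to compare against. What one can infer from the surrounding text is that the original proof in \cite{Osuka;11} almost certainly proceeds via the $G$-heat equation and $G$-It\^o's formula: one shows directly that $\E^{h}[\vp(\ol B_t-\ol B_s)]=u_{\vp}(t-s,0)$ by applying It\^o's formula to $u_{\vp}(\,\cdot\,,\ol B)\,D^{(h)}$, and the hypothesis $\sigma_0>0$ is invoked precisely so that $u_{\vp}\in C^{1,2}$ (cf.\ Remark~\ref{hitaika} and the mechanics of Lemma~\ref{lem:u1}). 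That route yields the \emph{equality} in one stroke, with no separate upper and lower bounds.

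Your approach via the upper-expectation representation is genuinely different, and the forward inequality is essentially correct: under each $Q_{\theta}=D^{(h)}_{1}P_{\theta}$, classical Girsanov makes $\ol B$ a continuous martingale whose quadratic-variation density lies in $\{\gamma\gamma^{*}:\gamma\in\Theta\}$, hence its law lies in $\calP$, and \eqref{;char3} gives $E_{Q_{\theta}}[\psi(\ol B_{\cdot})]\le \E[\psi(B_{\cdot})]$. Two cautions here: first, writing $\E^{h}[X]=\sup_{\theta}E_{Q_{\theta}}[X]$ presupposes $X\,D^{(h)}_{1}\in\calL^{1}_{G}(\Omega)$, which is exactly the content of \cite[Remark~5.8]{Osuka;11} that the paper invokes, not something you get for free; second, your appeal to Remark~\ref{hitaika} and the $G$-heat equation is misplaced in this probabilistic route---the regularity of $u_{\vp}$ plays no role in your argument, and what $\sigma_0>0$ actually buys you is the invertibility of $d\qv{B}_s/ds$ needed to run martingale representation.

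The real gap is the reverse inequality. Your plan ``run Girsanov in reverse under $P_{\sigma}$ with the density associated to $-h$'' does not close: the shift $\omega\mapsto\ol B(\omega)=\omega-\int_0^{\cdot}d\qv{\omega}_s\,h_s(\omega)$ is \emph{not} inverted by $\omega\mapsto\omega+\int_0^{\cdot}d\qv{\omega}_s\,h_s(\omega)$ when $h$ is path-dependent, because after shifting the argument of $h$ changes. Concretely, you would need, for every $\sigma\in\cA$, to exhibit some $\theta\in\cA$ with $(Q_{\theta})\circ\ol B^{-1}=P_{\sigma}$; constructing such a $\theta$ requires solving a fixed-point/SDE-type problem for the drift $d\qv{B}_s\,h_s$ along the $P_{\sigma}$-canonical process, and you have not indicated how to do this or why it stays inside $\cA$. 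Without that step your argument yields only $\E^{h}[\psi(\ol B_{\cdot})]\le\E[\psi(B_{\cdot})]$. The PDE approach sketched above sidesteps this difficulty entirely, which is presumably why \cite{Osuka;11} takes it.
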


\begin{rem}\label{rem:gir}
	Suppose that the assumptions of \tref{;gir} are fulfilled.
	If a functional $F \equiv F(B)$ on $\Omega$ 
	belongs to $\calL^1_G ( \Omega )$, 
	then we see that by its construction, 
	$\calL^{1, (h)}_G ( \Omega )$ contains $F ( \ol{B} )$
	and then by \tref{;gir},
	\begin{align}
		\E \left[ F(B) \right]
		=
		\E^h \left[ F( \ol{B} ) \right] ;
		\label{eq:gir}
	\end{align}
	this transformation of $G$-expectation will be seen 
	in Sections~\ref{;pr2} and \ref{s:rem}.
\end{rem}

A sufficient condition for $D^{(h)}$ to be a symmetric $G$-martingale,
referred to as $G$-Novikov's condition,
is also given in \cite{Osuka;11}:
there exists $\ve >0$ such that
\begin{align}
	\E 
	\left[ 
	\exp 
	\left( 
	\frac{1}{2} (1+ \ve ) \int_0^1 h_s \cdot (d \qv{B}_s \, h_s) 
	\right) 
	\right] 
	< \infty .
\end{align}

\begin{prop}[\cite{Osuka;11}, Proposition~5.9]\label{;nov}
	If $h \in ( \MG )^d$ satisfies $G$-Novikov's condition, 
	then the process $D^{(h)}$ is a symmetric $G$-martingale.
\end{prop}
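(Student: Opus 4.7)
The plan is to reduce to the classical Novikov theorem on each slice $P_{\theta}$ via the upper expectation representation \eqref{;char2}, and then to promote the resulting family of $P_{\theta}$-martingale properties to the symmetric $G$-martingale property by approximating $h$ with bounded simple integrands.

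First, I would reformulate $D^{(h)}$ in the classical picture: for each $\theta \in \cA$, under $P_{\theta}$ the canonical process $B$ is the It\^o integral $\int_0^{\cdot} \theta_s \, dW_s$, and the $G$-integrals appearing in \eqref{Dh} become classical ones,
\[
	\int_0^t h_s \cdot dB_s = \int_0^t (\theta_s^{*} h_s) \cdot dW_s,
	\qquad
	\int_0^t h_s \cdot (d\qv{B}_s \, h_s) = \int_0^t |\theta_s^{*} h_s|^2 \, ds,
\]
$P_{\theta}$-a.s.\ (first on simple integrands, then by the closure defining $M^p_G$). Applying \eqref{;char2} to the $G$-Novikov assumption gives the uniform bound
\[
	\sup_{\theta \in \cA} E_{P_{\theta}}\left[ \exp\left( \tfrac{1+\ve}{2}\int_0^1 |\theta_s^{*} h_s|^2 \, ds \right) \right] < \infty,
\]
so classical Novikov applies simultaneously on every $P_{\theta}$; moreover, the extra $\ve$-margin yields, by a standard Dol\'eans-Dade exponent trick (H\"older's inequality against a slightly larger exponent), a uniform estimate $\sup_{\theta \in \cA} E_{P_{\theta}}[(D^{(h)}_t)^p] \le K$ for some $p = p(\ve) > 1$ and all $t \in [0,1]$, giving uniform integrability of $\{D^{(h)}_t\}$ across the whole family $\{P_{\theta}\}$.

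Next, I would take simple bounded $h^n \in M^{2,0}_G$ with $\| h^n - h \|_{\MGR} \to 0$, chosen so that each $h^n$ still satisfies $G$-Novikov's condition with the same $\ve$ and hence inherits the same uniform $L^p$ bound. For each $n$, the $G$-It\^o formula gives
\[
	D^{(h^n)}_t = 1 + \int_0^t D^{(h^n)}_s \, h^n_s \cdot dB_s,
\]
and since $D^{(h^n)} h^n \in (\MG)^d$ the right-hand side is a symmetric $G$-martingale by construction of the $G$-stochastic integral. Convergence $h^n \to h$ in $\MGR$ yields $\int_0^{\cdot} h^n_s \cdot dB_s \to \int_0^{\cdot} h_s \cdot dB_s$ in $\calL^2_G(\Omega_t)$ and $\int_0^{\cdot} h^n_s \cdot (d\qv{B}_s h^n_s) \to \int_0^{\cdot} h_s \cdot (d\qv{B}_s h_s)$ in $\calL^1_G(\Omega_t)$, so $D^{(h^n)}_t \to D^{(h)}_t$ in capacity; combined with the uniform $L^p$ bound, this improves to convergence in $\calL^1_G(\Omega_t)$. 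Thus \eqref{;char1} places $D^{(h)}_t$ in $\calL^1_G(\Omega_t)$, and the $\calL^1_G$-continuity of conditional $G$-expectation yields, for every $0 \le s \le t \le 1$,
\[
	\E_s[D^{(h)}_t] = \lim_n \E_s[D^{(h^n)}_t] = \lim_n D^{(h^n)}_s = D^{(h)}_s \quad \text{q.s.},
\]
and analogously $\E_s[-D^{(h)}_t] = -D^{(h)}_s$, proving that $D^{(h)}$ is a symmetric $G$-martingale.

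The main obstacle I expect is extracting a single exponent $p > 1$ for which $(D^{(h^n)}_t)^p$ has expectation bounded uniformly in both $n$ and $\theta \in \cA$. Classical Novikov alone only delivers martingality and uniform integrability on each fixed probability space, whereas here the bound must be cooked from the single scalar assumption $\E[\exp(\tfrac{1+\ve}{2}\int_0^1 h_s \cdot (d\qv{B}_s h_s))] < \infty$. The trick is to spend the $\ve$-margin in a H\"older decomposition of the Dol\'eans-Dade identity so that each factor is an exponential moment controllable by the $G$-Novikov quantity; one must then choose the approximating sequence $h^n$ (for instance by truncating along the level sets of $\int_0^{\cdot} h_s \cdot (d\qv{B}_s h_s)$) in such a way that this exponential bound is not worsened along the sequence.
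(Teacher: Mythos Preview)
The paper does not prove this proposition at all: it is merely quoted from \cite{Osuka;11} (Proposition~5.9 there) and used as a black box throughout. There is therefore no ``paper's own proof'' to compare your attempt against.

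That said, your outline is in the right spirit for how such a result is typically established, and the key idea---using the extra $\ve$ in the exponent to manufacture a uniform $L^p$ bound on the exponential martingales via a H\"older splitting of the Dol\'eans--Dade identity---is exactly the standard device. The places where your sketch would need real work are: (a) verifying that $D^{(h^n)}_s h^n_s$ genuinely lies in $(\MG)^d$, which requires $D^{(h^n)}_s \in \calL^2_G(\Omega_s)$ uniformly in $s$ rather than merely in $\calL^1_G$; (b) making precise the choice of approximants $h^n$ so that the $G$-Novikov bound is preserved along the sequence (truncation along level sets of the quadratic variation integral is reasonable, but you must check that the truncated processes remain in $(\MG)^d$ and that the exponential moment does not blow up); and (c) the $\calL^1_G$-continuity of the conditional $G$-expectation $\E_s[\cdot]$, which is true but should be cited or proved. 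None of these is a fatal gap, but each is more than a formality.
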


In the sequel we denote by $\| \cdot \|_{\infty}$ 
the supremum norm under the capacity $c$: 
\begin{align}
	\| X \|_{\infty}
	:=
	\inf \{ M \ge 0 : c (|X| > M) =0 \}
	\qn{for } X \in L^0 ( \Omega ) .
\end{align}
We say that $h \in ( \MG )^d$ is bounded if
\begin{align}
	\sup_{ 0 \le t \le 1 }
	\| | h_t | \|_{\infty}
	<
	\infty .
\end{align}
$G$-Novikov's condition implies that $D^{(h)}$ 
is a symmetric $G$-martingale if $h$ is bounded.
We also write $\E^h [X]$ for $X \in L^0 ( \Omega )$ 
to denote $\E [ X D^{(h)}_1 ]$ 
whenever $X D^{(h)}_1 \in \calL^1_G ( \Omega )$; 
we recall from \cite[Remark~5.8]{Osuka;11} 
that under the assumptions of \tref{;gir}, 
we have $X D^{(h)}_1 \in \calL^1_G ( \Omega )$ 
if $X \in \calL^{1, (h)}_G ( \Omega )$. 
We close this section with a lemma 
that will also be referred to in Sections~\ref{;pr2} and \ref{s:rem}.

\begin{lem}\label{l:sym}
	Let $h \in ( \MG )^d$ be bounded.
	\begin{enumerate}[{\normalfont (i)}]
	\item
		Let $X \in L^0 ( \Omega )$ be such that 
		$X \in \calL^p_G ( \Omega )$ for some $p>1$.
		Then $\E^h [X]$ is well-defined, that is, 
		$X D^{(h)}_1 \in \calL^1_G ( \Omega )$. 
	\item
		Denote $\ol{B} = B - \int_0^{\cdot} d \qv{B}_s \, h_s$ as above.
		Then it holds that
		\begin{align}
			\left(
			\int_0^1 h_s \cdot d \ol{B}_s
			\right)
			D^{(h)}_1
			\in \calL^1_G ( \Omega )
		\end{align}
		and
		\begin{align}
			\E^h
			\left[
			\int_0^1 h_s \cdot d \ol{B}_s
			\right]
			=
			- \E^h
			\left[
			- \int_0^1 h_s \cdot d \ol{B}_s
			\right]
			= 0 .
		\end{align}
	\end{enumerate}
\end{lem}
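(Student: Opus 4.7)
For part (i), I would first establish that the Girsanov density $D^{(h)}_1$ has finite $G$-moments of all orders. Using \eqref{;qv} and \eqref{;variance}, the boundedness of $h$ yields a constant $C>0$ with $\int_0^1 h_s\cdot(d\qv{B}_sh_s)\le C$ quasi-surely. For $q\ge 1$, the algebraic identity
\[
(D^{(h)}_1)^q=D^{(qh)}_1\,\exp\!\Bigl(\tfrac{q(q-1)}{2}\int_0^1 h_s\cdot d\qv{B}_sh_s\Bigr)
\]
combined with \pref{;nov} applied to the still-bounded process $qh$ (so that $\E[D^{(qh)}_1]=1$) gives $\E[(D^{(h)}_1)^q]\le e^{Cq(q-1)/2}<\infty$, hence $D^{(h)}_1\in\calL^q_G(\Omega)$ for every $q\ge 1$. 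Now fix $X\in\calL^p_G(\Omega)$ with $p>1$, pick $r\in(1,p)$, and apply H\"older's inequality (for the upper expectation $\ol{\E}$) to obtain $\ol{\E}[|XD^{(h)}_1|^r]<\infty$; Markov then yields $\ol{\E}[|XD^{(h)}_1|\,\one_{\{|XD^{(h)}_1|>n\}}]\le n^{-(r-1)}\ol{\E}[|XD^{(h)}_1|^r]\to 0$. Since $XD^{(h)}_1$ also admits a q.c.\ version (as the product of two such), the characterization \eqref{;char1} gives $XD^{(h)}_1\in\calL^1_G(\Omega)$.

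For the integrability in part (ii), I would decompose
\[
\int_0^1 h_s\cdot d\ol{B}_s=\int_0^1 h_s\cdot dB_s-\int_0^1 h_s\cdot(d\qv{B}_sh_s);
\]
the It\^o integral belongs to $\calL^2_G(\Omega)$ by the isometry applied to bounded $h$, and the drift is bounded quasi-surely by the same estimate as in (i), so the sum lies in $\calL^2_G(\Omega)$. Part (i) applied with $p=2$ then yields $\bigl(\int_0^1 h_s\cdot d\ol{B}_s\bigr)D^{(h)}_1\in\calL^1_G(\Omega)$.

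To show that the two expressions vanish, my plan is to exploit \tref{;gir}: under $\E^h$, $\ol{B}$ is a $G$-Brownian motion, so $G$-It\^o integrals against $\ol{B}$ of bounded integrands are symmetric $G$-martingales with vanishing $\E^h$-expectation. Concretely I would approximate $h$ in $(\MG)^d$ by uniformly bounded step processes $h^n=\sum_k\xi_k^{(n)}\one_{[t_k^{(n)},t_{k+1}^{(n)})}$ with $\xi_k^{(n)}$ bounded by $\sup_t\|\,|h_t|\,\|_\infty$. For each $n$,
\[
\int_0^1 h^n_s\cdot d\ol{B}_s=\sum_k\xi_k^{(n)}\cdot(\ol{B}_{t_{k+1}^{(n)}}-\ol{B}_{t_k^{(n)}}),
\]
and the tower property of $\E^h$ together with $\E^h_t[\pm(\ol{B}_u-\ol{B}_t)]=0$ for $t\le u$ (from \tref{;gir}) gives $\E^h[\pm\int_0^1 h^n_s\cdot d\ol{B}_s]=0$. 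Passing to the limit uses $\int_0^1 h^n_s\cdot d\ol{B}_s\to\int_0^1 h_s\cdot d\ol{B}_s$ in $\calL^2_G(\Omega)$ (It\^o's isometry on the martingale part, uniform $L^\infty$-control on the drift part), combined with H\"older and $D^{(h)}_1\in\calL^2_G(\Omega)$ inside $\E^h[\cdot]=\E[\cdot\,D^{(h)}_1]$.

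The main technical point will be this last limiting argument: one must confirm that the elementary integrands $\xi_k^{(n)}$, which are naturally in $\calL^p_G(\Omega_{t_k^{(n)}})$, also belong to the conditional domain $\calL^{1,(h)}_G(\Omega_{t_k^{(n)}})$ where the tower property of $\E^h_{t_k^{(n)}}$ is available. Together with the identification of $B$ and $\ol{B}$ on $[0,t_k^{(n)}]$ afforded by \tref{;gir} and the exponential integrability from part (i), this should reduce to a routine density argument on the bounded Lipschitz cylinder functions.
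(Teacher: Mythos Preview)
Your treatment of part~(i) and of the integrability claim in part~(ii) matches the paper's proof essentially step for step: the moment bound on $D^{(h)}_1$ via the identity $(D^{(h)}_1)^q = D^{(qh)}_1\exp\bigl(\tfrac{q(q-1)}{2}\int_0^1 h_s\cdot d\qv{B}_s h_s\bigr)$, H\"older, and the characterization \eqref{;char1} are exactly what the paper does.

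For the vanishing of $\E^h[\pm Y_1]$ with $Y_1=\int_0^1 h_s\cdot d\ol{B}_s$, however, the paper takes a much shorter and more robust route than you propose. It does not invoke \tref{;gir} at all here. Instead it observes that for every $\theta\in\cA$, the process $Y_tD^{(h)}_t$ is a classical $P_\theta$-martingale (this is ordinary It\^o calculus under the probability $P_\theta$, using boundedness of $h$ for integrability), whence $E_{P_\theta}[Y_1D^{(h)}_1]=-E_{P_\theta}[-Y_1D^{(h)}_1]=0$ for all $\theta$; the upper expectation representation \eqref{;char2} then gives $\E^h[Y_1]=-\E^h[-Y_1]=0$ immediately. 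No approximation, no conditional $\E^h_t$, no domain questions.

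Your approach via \tref{;gir} is not wrong in spirit, but the ``routine density argument'' you defer is not routine. The space $\calL^{1,(h)}_G(\Omega_t)$ is built as a completion of cylinder functions of $\ol{B}$, whereas your step coefficients $\xi_k^{(n)}$ are cylinder functions of $B$. Passing from one to the other requires inverting the drift map $B\mapsto\ol{B}=B-\int_0^\cdot d\qv{B}_s\,h_s(B)$, which the paper only carries out for $h\in(\sbl)^d$ (the $\hat{h}$ construction in the proof of \pref{prop:u}), not for a general bounded $h$. You are also tacitly assuming a conditional expectation $\E^h_t$ with the usual tower and pull-out properties on a domain large enough to contain your integrands; the paper never constructs this. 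These issues can probably be handled, but they are genuine work, and the paper's two-line $P_\theta$-martingale argument avoids them entirely.
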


\begin{proof}
	(i)
	Let us check $X D^{(h)}_1 \in \calL^1_G ( \Omega )$ 
	via the characterization \eqref{;char1} of $\calL^1_G ( \Omega )$.
	Since $X$ and $D^{(h)}_1$ are in $\calL^1_G ( \Omega )$, 
	it is clear that $X D^{(h)}_1$ has a q.c.\ version.
	Note that by the boundedness of $h$ and \eqref{;qv},
	\begin{align}
		\int_0^1 h_s \cdot \left( d \qv{B}_s \, h_s \right)
		\le
		\sigma_1^2 \sup_{ 0 \le t \le 1 } \| | h_t | \|^2_{\infty}
		\qn{q.s.}
		\label{e:qvint}
	\end{align}
	with the constant $\sigma_1$ given in \eqref{;variance}. 
	Then for any $q \ge 1$,
	\begin{align}
		\ol{\E} \left[ ( D^{(h)}_1 )^q \right]
		&= 
		\ol{\E} 
		\left[ 
		\exp 
		\left( 
		\frac{ q^2 -q }{2} \int_0^1 h_s \cdot \left( d \qv{B}_s \, h_s \right) 
		\right)
		D^{(q h)}_1 
		\right] \\
		&\le 
		\exp 
		\left( 
		\frac{ q^2 -q }{2} \sigma_1^2 \sup_{0 \le t \le 1} \| |h_t| \|_{\infty}^2 
		\right)
		\E \left[ D^{(q h)}_1 \right] \\
		&= 
		\exp 
		\left( 
		\frac{ q^2 -q }{2} \sigma_1^2 \sup_{0 \le t \le 1} \| |h_t| \|_{\infty}^2 
		\right) 
		< 
		\infty,
		\label{eq:u11}
	\end{align}
	where for the last equality, 
	we used the fact that $D^{(qh)}$ is also a symmetric $G$-martingale 
	due to the boundedness of $h$.
	Then by H\"older's inequality,
	\begin{align}
		\ol{\E} \left[ | X D^{(h)}_1 |^{ \frac{p+1}{2} } \right]
		&\le
		\E \left[ |X|^{\frac{p+1}{2} \cdot \frac{2 p}{p+1} } \right] ^{\frac{p+1}{2 p}} 
		\E \left[ 
		( D^{(h)}_1 )^{ \frac{p+1}{2} \cdot \frac{2 p}{p-1} } 
		\right] ^{\frac{p-1}{2 p}} \\
		&=
		\E \left[ |X|^p \right]^{\frac{p+1}{2 p}}
		\E \left[ ( D^{(h)}_1 )^{ \frac{p(p+1)}{p-1} } \right]^{\frac{p-1}{2 p}}
		<
		\infty .
	\end{align}
	Therefore we obtain
	\begin{align}
		\lim_{n \to \infty} \ol{\E} 
		\left[ 
		| X D^{(h)}_1 | \one_{ \{ | X D^{(h)}_1 | \ge n \} } 
		\right] 
		=0 ,
	\end{align}
	and hence $X D^{(h)}_1 \in \calL^1_G ( \Omega )$.
	
	(ii)
	Set
	\begin{align}
		Y_t
		:=
		\int_0^t h_s \cdot d \ol{B}_s
		=
		\int_0^t h_s \cdot d B_s - \int_0^t h_s \cdot 
		\left(
		d \qv{B}_s \, h_s
		\right) 
	\end{align}
	for $0 \le t \le 1$.
	It follows from \eqref{e:qvint} that $Y_1 \in \calL^2_G ( \Omega )$,
	and hence $Y_1 D^{(h)}_1 \in \calL^1_G ( \Omega )$ by (i).
	Since $Y D^{(h)}$ is a $P_{\theta}$-martingale for any $\theta \in \cA$, 
	we have $E_{P_{\theta}} [ Y_1 D^{(h)}_1 ] = - E_{P_{\theta}} [ - Y_1 D^{(h)}_1 ] =0$.
	As $\theta \in \cA$ is arbitrary, we obtain
	\begin{align}
		\E^h \left[ Y_1 \right]
		=
		- \E^h \left[ - Y_1 \right]
		=
		0
	\end{align}
	by the upper expectation representation \eqref{;char2}, 
	and end the proof.
\end{proof}

\section{The variational representation for $\bm{G}$-Brownian \\
functionals}\label{;pr2} 

In this section, we state and prove the main result of this paper, 
the variational representation \eqref{eq:intro} 
for functionals on $G$-expectation space. 
In the rest of this paper, we assume $\sigma_0 >0$.
This assumption allows us to apply Girsanov's formula 
for $G$-Brownian motion (\tref{;gir}),
which plays a central role throughout this section and Section~\ref{s:rem}. 

When $f \in \calL^1_G ( \Omega )$ is q.s.\ bounded, we simply call it bounded. 

\begin{thm}\label{;tmain}
	For any bounded elements $f$ of $\calL^1_G ( \Omega )$, 
	it holds that 
	\begin{align}\label{;eqmain}
		\log \E \left[ e^{f(B)} \right]
		=
		\sup _{h \in ( \MG )^d}
		\E 
		\left[ 
		f \left( B+\int _0^{\cdot} d \qv{B}_s \, h_s \right) 
		-
		\frac{1}{2} \int _0^1 h_s \cdot \left( d \qv{B}_s \, h_s \right) 
		\right] . 
	\end{align}
\end{thm}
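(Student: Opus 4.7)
The plan is to split the proof of \eqref{;eqmain} into the two inequalities $\ge$ and $\le$, and in both directions first reduce the problem to the case where $f \in C_{b,Lip}(\Omega)$. The reduction is straightforward: $\bLip (\Omega)$ is dense in $\calL^1_G (\Omega)$ by construction, and uniform approximation preserves boundedness, so one approximates a general bounded $f \in \calL^1_G ( \Omega )$ by a bounded Lipschitz cylinder sequence $(f_n)$ and passes to the limit on both sides. On the left, $\log \E[e^{f_n}] \to \log \E[e^f]$ by continuity of $\E$ on $\calL^1_G$ plus uniform boundedness. On the right, one must argue that the supremum over $h$ commutes suitably with the approximation; by the well-definedness result (Proposition referred to as \pref{;abs} in the introduction), the functional $f(B + \int_0^{\cdot} d\qv{B}_s h_s)$ lies in $\calL^1_G ( \Omega )$, and the uniform bound on $\|f_n - f\|_\infty$ controls the difference inside the supremum uniformly in $h$.

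For the lower bound, fix a bounded $h \in (\MG)^d$ (the unbounded case follows by truncation and the well-definedness result). Apply Girsanov's formula (\tref{;gir}) with drift $-h$: set $\ol{B}' = B + \int_0^\cdot d\qv{B}_s h_s$, which is a $G$-Brownian motion under $\E^{-h}$, so by \rref{rem:gir}, $\E[e^{f(B)}] = \E^{-h}[e^{f(\ol{B}')}] = \E\bigl[e^{f(\ol{B}')} D^{(-h)}_1\bigr]$. Expanding $D^{(-h)}_1$ gives
\begin{align}
\E[e^{f(B)}]
= \E\!\left[\exp\!\left(f(\ol{B}') - \int_0^1 h_s \cdot dB_s - \tfrac{1}{2}\int_0^1 h_s \cdot (d\qv{B}_s h_s)\right)\right].
\end{align}
Using Jensen's inequality for the sublinear expectation (which follows from $\E = \sup_{P \in \calP} E_P$ and the classical Jensen for each $E_P$), we get
\begin{align}
\log \E[e^{f(B)}] \ge \E\!\left[f(\ol{B}') - \int_0^1 h_s \cdot dB_s - \tfrac{1}{2}\int_0^1 h_s \cdot (d\qv{B}_s h_s)\right].
\end{align}
Since $\int_0^\cdot h_s \cdot dB_s$ is a symmetric $G$-martingale (it is a $P_\theta$-martingale for every $\theta \in \cA$ by the upper expectation representation, analogous to \lref{l:sym}(ii)), it can be removed from the sublinear expectation without changing its value, yielding the lower bound.

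For the upper bound, the ingredient is a Clark-Ocone type representation (the \lref{lem:u2} mentioned in the introduction): for bounded Lipschitz cylinder $f$, one can write
\begin{align}
f(B) = \log \E[e^{f(B)}] + \int_0^1 \alpha_s \cdot dB_s - \tfrac{1}{2}\int_0^1 \alpha_s \cdot (d\qv{B}_s \alpha_s) - R_1
\end{align}
for some $\alpha \in (\MG)^d$ and some quasi-surely nonnegative residual $R_1$ arising from the decreasing part in Peng's $G$-martingale decomposition of $e^{Y_\cdot}$ where $Y_t = \log \E_t[e^{f(B)}]$ (obtained by applying It\^o's formula to the $G$-martingale $\E_\cdot[e^{f(B)}]$). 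For any admissible $h$, substitute $B = \ol{B}' - \int_0^\cdot d\qv{B}_s h_s$ on the left and rewrite the stochastic integral in terms of $dB = d\ol{B}' - d\qv{B}\,h$, which yields, after completing the square,
\begin{align}
f\!\left(B + \int_0^\cdot d\qv{B}_s h_s\right) - \tfrac{1}{2}\int_0^1 h_s \cdot (d\qv{B}_s h_s)
= \log \E[e^{f(B)}] + \int_0^1 \alpha_s \cdot dB_s - \tfrac{1}{2}\int_0^1 (\alpha_s - h_s) \cdot (d\qv{B}_s (\alpha_s - h_s)) - R_1.
\end{align}
The $(\alpha - h)$ quadratic term is nonpositive (since $d\qv{B}_s$ is PSD, by \eqref{;qv}), and $R_1 \ge 0$ q.s.; taking $\E$ and using that $\int_0^\cdot \alpha_s \cdot dB_s$ is a symmetric $G$-martingale with zero $G$-expectation gives exactly the desired bound $\E[\cdots] \le \log \E[e^{f(B)}]$.

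The main obstacle I anticipate is the upper bound. Two issues deserve care: first, the Clark-Ocone type lemma \lref{lem:u2} (used as a black box here) requires $M_t := \E_t[e^{f(B)}]$ to stay bounded away from zero for the It\^o--log computation, which should follow from $f$ being bounded; second, applying the decomposition of $f$ under the transformed coordinate $B + \int d\qv{B}\,h$ involves a change-of-variables step that must be made rigorous using \tref{;gir} and the invariance $\qv{\ol{B}'} = \qv{B}$, while also checking that the residual $R_1$ transforms correctly (or equivalently, that all integrability needed to take $\E$ of the identity is preserved under the Girsanov change). The lower bound, by contrast, is essentially a direct calculation once Girsanov and the sublinear Jensen inequality are in hand.
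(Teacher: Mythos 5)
Your ``upper bound'' does not prove the upper bound \eqref{Ubound}; it proves the lower bound \eqref{Lbound} a second time. From your completed-square identity you conclude, for \emph{every} admissible $h$, that
\begin{align}
	\E \left[ f \left( B+\int_0^{\cdot} d\qv{B}_s\,h_s \right)
	-\frac{1}{2}\int_0^1 h_s\cdot\left( d\qv{B}_s\,h_s \right) \right]
	\le \log\E\left[ e^{f(B)} \right],
\end{align}
because the quadratic term $-\frac{1}{2}\int_0^1(\alpha_s-h_s)\cdot(d\qv{B}_s(\alpha_s-h_s))$ and $-R_1$ are both nonpositive. Taking the supremum over $h$ yields $\sup_h \E[\cdots]\le\log\E[e^{f(B)}]$, which is exactly \eqref{Lbound} --- the inequality your Girsanov--Jensen argument has already established. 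The hard direction, $\log\E[e^{f(B)}]\le\sup_h\E[\cdots]$, requires exhibiting a near-optimal $h$, and there the nonnegativity of the residual $R_1=A$ works \emph{against} you: even if you choose $h$ so that the quadratic term vanishes (which already forces the fixed-point construction $\hat{h}$ of the paper, hence an approximation of the Clark--Ocone integrand by simple processes, hence the Scheff\'e-type \lref{lem:sche}), you are left with $\E[\int_0^1\alpha_s\cdot dB_s-A]=-\inf_{\theta\in\cA}E_{P_\theta}[A]\le 0$, still on the wrong side. The missing ingredient is \lref{urem}: by weak compactness of $\calP$ and maximization of $P\mapsto E_P[e^{f(B)}]$ one finds $P'\in\calP$ with $A=0$ $P'$-a.s., so that the Clark--Ocone identity becomes an exact equality under $Q=D^{(h)}_1 P'$ before being dominated by $\E^{h}[\cdots]$ and converted, via Girsanov, into one term of the supremum. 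Without some such step the equality \eqref{;eqmain} is not obtained.

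Two further remarks. Your lower bound is sound and is a genuinely different, arguably more direct, route than the paper's: applying \tref{;gir} with drift $-h$ and then Jensen's inequality for $\ol{\E}=\sup_{\theta}E_{P_\theta}$ replaces both the entropy inequality $-x\log x\le 1-x$ of \lref{lem:l1} and the drift fixed-point construction $\OL{h}$ of \lref{lem:lbLip}. But note two imprecisions that recur in your write-up: (i) after substituting $B\mapsto T^h(B)$ into any q.s.\ identity involving a stochastic integral, the integrand becomes $\alpha_s(T^h(B))$, not $\alpha_s$, and one must check that this composed process is an admissible integrand (or argue $P_\theta$-wise) before invoking symmetry of its It\^o integral; (ii) $\| f_n-f\|_{\infty}$ does not tend to $0$ for an $\calL^1_G$-approximating sequence, so the reduction from $\bLip(\Omega)$ to general bounded $f$ cannot be ``uniform in $h$'' and must instead be carried out for each fixed $h$ using \pref{;abs}, as in the proof of \pref{;comp}.
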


The well-definedness of the right-hand side 
of \eqref{;eqmain} will be seen in \pref{;comp}.
The following fact will be used in the proof of \tref{;tmain} repeatedly:
\begin{rem}\label{rem:Lip}
	For any $M>0$, the mappings
	\begin{align}\label{lmaps}
		\R \ni x \mapsto 
		\log \left( e^{-M} \vee \left( x \wedge e^M \right) \right)
		& & \n{and} & &
		\R \ni x \mapsto 
		\exp \left\{ \left( -M \right) \vee \left( x \wedge M \right) \right\} 
	\end{align}
	are both Lipschitz functions with Lipschitz constant $e^M$. 
	Here and in what follows we write $x \vee y = \max \{ x,y \}$ 
	and $x \wedge y = \min \{ x,y \}$ for $x,y \in \R$. 
\end{rem}

\subsection{A preliminary result}\label{;pr1}

In this subsection, we see that $G$-expectation in the right-hand side 
of \eqref{eq:intro} is well-defined, that is, we prove the following:
\begin{prop}\label{;comp}
	Let $f$ be a bounded element of $\calL^1_G ( \Omega )$. 
	Then, for any $h \in ( \MG )^d$, we have 
	\begin{align}
		f\left( B+\int _{0}^{\cdot }d\qv{B}_{s}\,h_{s}\right) 
		\in \calL^1_G (\Omega ). 
	\end{align}
\end{prop}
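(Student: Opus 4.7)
The plan is to verify the characterization \eqref{;char1} of $\calL^1_G(\Omega)$ for $F := f\bigl(B + \int_0^{\cdot} d\qv{B}_s\,h_s\bigr)$ by a two-stage approximation. First, for a bounded Lipschitz cylindrical $f = \varphi(B_{t_1},\dots,B_{t_n}) \in \bLip(\Omega)$, write $B^h_t := B_t + \int_0^t d\qv{B}_s\,h_s$; then
\begin{align*}
f\Bigl(B + \int_0^{\cdot} d\qv{B}_s\,h_s\Bigr) = \varphi\bigl(B^h_{t_1}, \ldots, B^h_{t_n}\bigr),
\end{align*}
and each coordinate of $B^h_{t_i}$ lies in $\calL^1_G(\Omega_{t_i})$, as the sum of $B_{t_i}$ and the $G$-integral $\int_0^{t_i} d\qv{B}_s\,h_s$ (well-defined for any $h \in (\MG)^d$). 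Composition of a bounded Lipschitz map with elements of $\calL^1_G$ preserves $\calL^1_G(\Omega)$, so the cylindrical case is settled for arbitrary $h \in (\MG)^d$.

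To lift to a general bounded $f \in \calL^1_G(\Omega)$, I would approximate $f$ by $f_n \in \bLip(\Omega)$ with $|f_n| \le \|f\|_{\infty}$ (truncate via \rref{rem:Lip}) and $\E[|f_n - f|^p] \to 0$ for some $p > 1$. Suppose for now that $h$ is bounded. Then \tref{;gir} applied with $-h$ in place of $h$ identifies $\ol B = B^h$ as a $G$-Brownian motion under $\E^{-h}[\,\cdot\,] = \E[\,\cdot\,D^{(-h)}_1]$, and \rref{rem:gir} yields $\E[g(B^h) D^{(-h)}_1] = \E[g]$ for every $g \in \calL^1_G(\Omega)$. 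Combining this with sublinear H\"older's inequality and with finite positive and negative moments of $D^{(\pm h)}_1$ (obtained by the computation in \eqref{eq:u11}, using the boundedness of $h$ and \eqref{;qv}), one produces
\begin{align*}
\E\bigl[|f_n(B^h) - f_m(B^h)|\bigr] \le C_h\,\E\bigl[|f_n - f_m|^p\bigr]^{1/p},
\end{align*}
which vanishes as $n, m \to \infty$. Hence $\{f_n(B^h)\}$ is Cauchy in $\calL^1_G(\Omega)$; its limit is identified with $f(B^h)$ by extracting a quasi-surely convergent subsequence and invoking the absolute continuity of $B^h$ with respect to $B$ (\pref{;abs}) to ensure that $B^h$ avoids the polar set on which $f_n \to f$ fails.

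For general $h \in (\MG)^d$, I would approximate $h$ by bounded $h^{(N)} \in (\MG)^d$ in $M^2_G(0,1;\R^d)$, so that $\int_0^{\cdot} d\qv{B}_s\,h^{(N)}_s \to \int_0^{\cdot} d\qv{B}_s\,h_s$ in $\calL^1_G$; the bounded case gives $f(B^{h^{(N)}}) \in \calL^1_G(\Omega)$, and \pref{;abs} together with \eqref{;char1} lets me pass to the limit $N \to \infty$. The main obstacle is the H\"older-type estimate in the bounded-$h$ step, which demands uniform control of negative-order moments of $D^{(-h)}_1$ via the boundedness of $h$ and \eqref{;qv}; the unbounded-$h$ case moreover cannot invoke Girsanov directly and so relies essentially on the absolute continuity result \pref{;abs} of \S\ref{s:rem}.
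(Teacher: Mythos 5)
Your cylindrical step and your recognition that \pref{;abs} is the indispensable input are both correct, and the bounded-$h$ argument via Girsanov and H\"older (with the moment bound \eqref{eq:u11} controlling the positive and negative powers of $D^{(-h)}_1$) can be made to work. The paper, however, never splits into bounded and unbounded $h$: it fixes an arbitrary $h\in(\MG)^d$, notes that $f_n(T^h(B))\in\calL^1_G(\Omega)$ for Lipschitz $f_n$ (quasi-continuity plus boundedness plus \eqref{;char1}), and then estimates $\ol{\E}\left[\left|f_n(T^h(B))-f(T^h(B))\right|\right]$ by splitting over $A_n=\{|f_n-f|>\ve\}$; the Chebyshev bound $\hat c(A_n)\le\ve^{-1}\E[|f_n-f|]$ together with \pref{;abs} --- which is stated for every $h\in(\MG)^d$, bounded or not --- gives $c(T^h(B)\in A_n)<\ve$ for large $n$. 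Your H\"older detour is therefore a correct but redundant alternative for the bounded case.

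The genuine gap is your final step. Writing that \pref{;abs} together with \eqref{;char1} ``lets me pass to the limit $N\to\infty$'' is not yet an argument: to conclude $f(B^{h^{(N)}})\to f(B^{h})$ in $\ol{\E}[|\cdot|]$ you must interpose a Lipschitz approximation $f_n$ of $f$ and control $\ol{\E}\left[\left|f(B^{h^{(N)}})-f_n(B^{h^{(N)}})\right|\right]$ \emph{uniformly in} $N$. Your H\"older constant $C_{h^{(N)}}$ blows up with $\sup_t\| |h^{(N)}_t| \|_{\infty}$, so it cannot supply this uniformity, and the statement of \pref{;abs} produces a $\delta=\delta(\ve,h)$ for each fixed $h$, not uniformly over the family $\{h^{(N)}\}$ (such uniformity does follow from the constant $C_h$ appearing in its proof, but you would have to extract and justify that refinement separately). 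Meanwhile the remaining term $\ol{\E}\left[\left|f_n(B^{h})-f(B^{h})\right|\right]$ can only be handled by invoking \pref{;abs} for the unbounded $h$ itself --- at which point you have already reproduced the paper's one-step proof and the entire reduction to bounded $h$ is superfluous. The fix is to drop the bounded/unbounded dichotomy and run the $\ve$-splitting argument directly with \pref{;abs} applied to the given $h$.
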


A key step to the proof of this proposition 
is an absolute continuity stated in \pref{;abs}.
In what follows, we denote 
\begin{align}
	T^h (B)_t 
	=
	B_t +\int _0^t d \qv{B}_s \, h_s , \qd 
	0 \le t \le 1, 
\end{align}
for $h\in ( \MG )^d$. 

\begin{proof}[Proof of \pref{;comp}]
	Let $\{ f_n \}_{n \in \N} \subset \bLip ( \Omega )$ be such that 
	\begin{align}
		\lim_{n \to \infty} 
		\E [| f_n (B) - f(B) |] =0 . 
	\end{align}
	Truncating $f_n$ if necessary, we may assume that
	$\| f_n \|_{\infty} \le \| f \|_{\infty}$ for all $n \in \N$. 
	Since every $f_n$ is in $\bLip ( \Omega )$ and $\int _0^t d \qv{B}_s \, h_s$ 
	has a q.c.\ version for each $t\in [0,1]$,
	it is clear that the functional $f_n \left( T^h (B) \right) $ 
	also has a q.c.\ version, hence belongs to $\calL^1_G (\Omega )$ 
	due to \eqref{;char1}.
	Therefore, in order to prove the proposition, 
	it is sufficient to show 
	\begin{align}
		\ol{\E}
		\left[ 
		\left| 
		f_n \left( T^h (B) \right)
		- f \left( T^h (B) \right) 
		\right| 
		\right] 
		\xrightarrow[ n \to \infty ]{} 0
		\label{eq1}
	\end{align}
	because of \eqref{;char2}. 
	To this end, fix $\ve > 0$ arbitrarily. 
	By the sublinearity of $\ol{\E}$, 
	the left-hand side of \eqref{eq1} is bounded from above by the sum
	\begin{align}
		&\ol{\E} 
		\left[ 
		\left| f_n ( T^h (B) ) - f( T^h (B) ) \right|
		; T^h (B) \in A_n^c \right] \\
		&\hspace{70pt}
		+
		\ol{\E} 
		\left[ 
		\left| f_n ( T^h (B) ) - f( T^h (B) ) \right| 
		; T^h (B) \in A_n \right] , 
		\label{eq0}
	\end{align}
	where $A_n = \{ \omega : | f_n ( \omega ) - f( \omega ) | > \ve \}$. 
	The first term is less than or equal to $\ve$ from the
	definition of $A_n$. 
	On the other hand, since
	\eqref{abs2} in \pref{;abs} yields the bounds 
	\begin{align}
		\left| f (T^h (B)) \right| ,~ \left| f_n (T^h (B)) \right| 
		\le 
		\| f \|_{\infty}
		\qn{q.s.}
		\label{eq2}
	\end{align}
	for all $n \in \N$, the second term of \eqref{eq0} is
	less than or equal to 
	\begin{align}
		2 \| f \| _{\infty} c \left( T^h(B) \in A_n \right) .
	\end{align}
	By Chebyshev's inequality and \eqref{;char3}, 
	\begin{align}
		\hat{c} \left( A_n \right) 
		\le 
		\ve^{-1}
		\E 
		\left[ 
		| f_n (B) - f(B) |
		\right] ;
		\label{eq3}
	\end{align}
	the right-hand converges to $0$ by letting $n \to \infty$. 
	Therefore \pref{;abs} implies $c \left( T^h(B) \in A_n \right) < \ve$ for
	sufficiently large $n$. 
	Combining these estimates, we can bound 
	\eqref{eq0} from above by
	\begin{align}
		(1+ 2 \| f \| _{\infty} ) \ve 
	\end{align}
	for sufficiently large $n$, 
	and hence obtain \eqref{eq1}. 
\end{proof}

\subsection{Proof of the lower bound}\label{;prflb}

In this subsection, we prove the lower bound in \tref{;tmain}:
for any bounded elements $f$ of $\calL^1_G ( \Omega )$,
\begin{align}\label{Lbound}
	\log \E \left[ e^{f(B)} \right]
	\ge 
	\sup_{h \in ( \MG )^d} 
	\E 
	\left[
	f \left( B + \int_0^{\cdot} d \qv{B}_s \, h_s \right)
	- 
	\frac{1}{2} \int_0^1 h_s \cdot \left( d \qv{B}_s \, h_s \right) 
	\right] .
\end{align}

\begin{lem}\label{lem:l1}
	Let $h \in ( \MG )^d$ be bounded. 
	Then for any bounded $f \in \calL^1_G ( \Omega )$, we have
	\begin{align}\label{eq:krhr}
		\log \E \left[ e^{f(B)} \right]
		\ge 
		\E ^h \left[ f(B) - \log D^{(h)}_1 \right].
	\end{align}
\end{lem}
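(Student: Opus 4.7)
The plan is to lift the classical Jensen/Donsker--Varadhan lower bound to the $G$-expectation setting via the upper-expectation representation \eqref{;char2}. More precisely, I would fix $h$, prove the desired inequality under each $P_\theta$ separately, and then take the supremum over $\theta\in\cA$ to recover both sides of \eqref{eq:krhr}.

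\textbf{Step 1 (classical bound at each $\theta$).} For each $\theta\in\cA$, under $P_\theta$ the canonical process $B$ has the law of $\int_0^\cdot\theta_s\,dW_s$, so $D^{(h)}$ is an ordinary exponential local martingale. Boundedness of $h$ together with the boundedness of $\Theta$ makes the classical Novikov condition trivial, so $D^{(h)}$ is a genuine $P_\theta$-martingale with $E_{P_\theta}[D^{(h)}_1]=1$. Let $Q_\theta$ be the probability measure on $\calB(\Omega)$ defined by $dQ_\theta/dP_\theta=D^{(h)}_1$. Since $f$ is bounded, $e^{f(B)}/D^{(h)}_1$ is a well-behaved $Q_\theta$-random variable, and Jensen's inequality for the concave logarithm under $Q_\theta$ yields
\begin{align*}
\log E_{P_\theta}\!\bigl[e^{f(B)}\bigr]
=\log E_{Q_\theta}\!\left[\frac{e^{f(B)}}{D^{(h)}_1}\right]
\ge E_{Q_\theta}\!\left[f(B)-\log D^{(h)}_1\right]
= E_{P_\theta}\!\left[\bigl(f(B)-\log D^{(h)}_1\bigr)D^{(h)}_1\right].
\end{align*}

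\textbf{Step 2 (supremum over $\theta$).} Next I would take $\sup_{\theta\in\cA}$ on both sides. Since $\log$ is monotone and continuous on $(0,\infty)$ and $e^{f(B)}$ is bounded above and away from zero, the sup commutes with $\log$, so the left-hand side becomes $\log\E[e^{f(B)}]$ by \eqref{;char2}; the right-hand side is, by the definition of $\E^h$, exactly $\E^h[f(B)-\log D^{(h)}_1]$. Before asserting the latter, I would verify that $\E^h[f(B)-\log D^{(h)}_1]$ is well-defined as a $G$-expectation: since $h$ is bounded, $\int_0^1 h_s\cdot(d\qv{B}_s h_s)$ is q.s.\ bounded by $\sigma_1^2\sup_t\||h_t|\|_\infty^2$ (cf.~\eqref{e:qvint}) and the It\^o term $\int_0^1 h_s\cdot dB_s$ lies in $\calL^2_G(\Omega)$ by the It\^o isometry, hence $f(B)-\log D^{(h)}_1\in\calL^2_G(\Omega)$; \lref{l:sym}(i) then places $(f(B)-\log D^{(h)}_1)D^{(h)}_1$ inside $\calL^1_G(\Omega)$, and \eqref{;char2} identifies $\E^h[\,\cdot\,]$ with $\sup_\theta E_{P_\theta}[\,\cdot\,D^{(h)}_1]$. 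Combining these identifications with the pointwise inequality of Step~1 gives \eqref{eq:krhr}.

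The probabilistic content is thus nothing beyond the classical relative-entropy inequality applied fiberwise in $\theta$, and the $G$-specific part is merely the two-step unpacking of the sublinear expectations into their linear constituents $P_\theta$. I expect no substantial obstacle beyond the integrability bookkeeping in the $G$-framework, which \lref{l:sym} and the upper-expectation representation are already tailored to handle; the boundedness of both $f$ and $h$ is what makes every interchange of $\log$, $\sup$, and $G$-norms legal.
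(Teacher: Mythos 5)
Your proposal is correct, and it reaches \eqref{eq:krhr} by a genuinely different route from the paper. The paper argues intrinsically at the level of the sublinear expectation: it writes the right-hand side as $\log \E [e^{f(B)}]+\E^h [-\log ( \E [e^{f(B)}] e^{-f(B)} D^{(h)}_1 )]$, applies the pointwise inequality $-x\log x\le 1-x$, and then closes the estimate using sublinearity together with the fact that $D^{(h)}$ is a \emph{symmetric} $G$-martingale (via $G$-Novikov's condition, \pref{;nov}), so that $\E [ e^{f(B)}/\E[e^{f(B)}] ] + \E[-D^{(h)}_1]=1-1=0$. You instead decompose via the upper expectation representation \eqref{;char2}, prove the classical Donsker--Varadhan/Jensen bound under each $P_\theta$ (where $D^{(h)}$ is an ordinary martingale by the classical Novikov condition), and take the supremum over $\theta\in\cA$; since the inequality holds fiberwise and $\sup$ preserves it, and $\log$ commutes with $\sup$ by monotonicity, both sides reassemble into the $G$-quantities. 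Your integrability bookkeeping (that $f(B)-\log D^{(h)}_1\in\calL^2_G(\Omega)$ and \lref{l:sym}(i) making $\E^h$ well-defined) matches the paper's. What your route buys is that it needs only classical facts plus \eqref{;char2}, bypassing \pref{;nov} and the symmetric-martingale manipulation; what the paper's route buys is an argument that stays entirely inside the $G$-framework and parallels Bou\'e--Dupuis directly. One caveat worth keeping in mind: the fiberwise reduction succeeds here only because $h$ is fixed so the inequality is pointwise in $\theta$; as \rref{nimed} indicates, this strategy does not trivially yield the full representation \eqref{;eqmain}, where the suprema over $h$ and over $\theta$ interact.
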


\begin{proof}
	First note that $f(B) - \log D^{(h)}_1 \in \calL^2_G ( \Omega )$ 
	by the assumption. 
	Hence the right-hand side of \eqref{eq:krhr} 
	is well-defined by (i) of \lref{l:sym}, 
	and is equal to
	\begin{align}
		&\E^h 
		\left[ 
		\log \E \left[ e^{f(B)} \right] 
		-
		\log \E \left[ e^{f(B)} \right]
		+ 
		\log e^{f(B)} 
		- \log D^{(h)}_1 
		\right] \\
		&= 
		\log \E \left[ e^{f(B)} \right]
		+ 
		\E^h 
		\left[ 
		- \log 
		\left( 
		\frac{ \E \left[ e^{f(B)} \right] }{ e^{f(B)} } 
		D^{(h)}_1 
		\right)
		\right] . 
		\label{eq:l1}
	\end{align}
	We rewrite the second term of \eqref{eq:l1} to obtain the bound
	\begin{align}
		&\E 
		\left[ 
		- \frac{ \E \left[ e^{f(B)} \right] }{ e^{f(B)} } 
		D^{(h)}_1
		\log 
		\left( 
		\frac{ \E \left[ e^{f(B)} \right] }{ e^{f(B)} } 
		D^{(h)}_1 
		\right)
		\times 
		\frac{ e^{f(B)} }{ \E \left[ e^{f(B)} \right] }
		\right] \\
		&\le 
		\E 
		\left[ 
		\left( 
		1- 
		\frac{ \E \left[ e^{f(B)} \right] }{ e^{f(B)} } 
		D^{(h)}_1 
		\right)
		\times 
		\frac{ e^{f(B)} }{ \E \left[ e^{f(B)} \right] } 
		\right] \\
		&=
		\E 
		\left[ 
		\frac{ e^{f(B)} }{ \E \left[ e^{f(B)} \right] } 
		- D^{(h)}_1 
		\right] ,
	\end{align}
	where the inequality follows from that 
	$-x \log x \le 1-x$ for $x>0$.
	Since $D^{(h)}$ is a symmetric $G$-martingale 
	by the boundedness of $h$
	and $G$-Novikov's condition (\pref{;nov}), 
	\begin{align}
		\E 
		\left[ 
		\frac{ e^{f(B)} }{ \E \left[ e^{f(B)} \right] } 
		- D^{(h)}_1 
		\right]
		= 
		\E 
		\left[ 
		\frac{ e^{f(B)} }{ \E \left[ e^{f(B)} \right] } 
		\right] 
		+ 
		\E \left[ -D^{(h)}_1 \right]
		= 1-1
		= 0.
	\end{align}
	Therefore the lemma follows.
\end{proof}

We denote by $\sbl$ the subset of $M^{2,0}_G (0,1)$ 
consisting of elements with each $\xi _k$ 
in the definition \eqref{;mpg} of $M^{p,0}_G (0,1)$ 
belonging to $\bLip ( \Omega _{t_k} )$.
Since $\bLip ( \Omega _{t_k} )$ is dense in $\calL ^2_G ( \Omega _{t_k} )$, 
we may deduce that $\sbl$ is also dense in $\MG$.
Let $h \equiv h_{\cdot} (B) \in ( \sbl )^d$ be written as 
\begin{align}
	h_t
	=
	\sum _{k=0}^{n-1} 
	\xi _k \one _{[ t_k , t_{k+1} )} (t), 
	\qd 0 \le t \le 1, 
	\label{sbl}
\end{align}
for some $n \in \N $, $0=t_0 < t_1 < \dots < t_n =1$, 
$\xi _0 \in \R ^d$, 
and $\xi _k \equiv \xi _k (B) \in ( \bLip ( \Omega _{t_k} ))^d$, 
$k=1 \ddd n-1$.
We associate $h$ with a simple process $\OL{h}$ defined as follows: 
\begin{align}
	&\bar{\xi} _0 
	:= \xi _0, 
	& &
	\OL{h}_t 
	:= \bar{\xi} _0 ,~ 
	t_0 \le t < t_1 , \\
	&\bar{\xi} _1 
	:= \xi _1 
	\left( 
	B_t - \int _0^t d \qv{B}_s \, \OL{h}_s , \, t \le t_1 
	\right) , 
	& &
	\OL{h} _t := \bar{\xi} _1 ,~ 
	t_1 \le t < t_2 , \\
	&\hspace{14em}\vdots \\
	&\bar{\xi}_{n-1} 
	:= \xi _{n-1} 
	\left( 
	B_t - \int _0^t d \qv{B} _s \, \OL{h} _s , \, t \le t_{n-1} 
	\right) , 
	& &
	\OL{h} _t 
	:= \bar{\xi}_{n-1} ,~
	t_{n-1} \le t < t_n . 
\end{align}
By this construction, it is clear that each $\bar{\xi} _k$ 
belongs to $\calL^1_G ( \Omega )$ 
and is bounded, and hence $\OL{h}$ 
is a bounded element of $( M^{2,0}_G (0,1) )^d$;
moreover, 
\begin{align}\label{;barh}
	\OL{h}_t = h_t \left( T^{-\OL{h}} (B)\right) 
	\quad \text{for all } 0 \le t \le 1. 
\end{align}

\begin{lem}\label{lem:lbLip}
	\eqref{Lbound} holds for $f \in \bLip ( \Omega )$.
\end{lem}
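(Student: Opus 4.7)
The plan is to handle simple, bounded integrands first via Girsanov's formula, then extend by density. Concretely, fix $h \in (\sbl)^d$ and form the companion simple process $\OL{h}$ as in \eqref{;barh}. Since $\OL{h}$ is bounded, \lref{lem:l1} applied to $\OL{h}$ yields the starting inequality
\begin{align}
\log \E\!\left[e^{f(B)}\right] \;\ge\; \E^{\OL{h}}\!\left[f(B) - \log D^{(\OL{h})}_1\right].
\end{align}

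Next I would rewrite the right-hand side via Girsanov. By \tref{;gir}, $\ol{B} := B - \int_0^{\cdot} d\qv{B}_s\,\OL{h}_s$ is a $G$-Brownian motion under $\E^{\OL{h}}$, so $\qv{B} = \qv{\ol{B}}$; moreover \eqref{;barh} gives $\OL{h}_\cdot = h_\cdot(\ol{B})$ and hence $B = T^{h(\ol{B})}(\ol{B})$. Expanding $\log D^{(\OL{h})}_1 = \int_0^1 \OL{h}\cdot dB - \tfrac{1}{2}\int_0^1 \OL{h}\cdot(d\qv{B}\,\OL{h})$ and converting the $dB$-integral into a $d\ol{B}$-integral via $\int \OL{h}\cdot dB = \int \OL{h}\cdot d\ol{B} + \int \OL{h}\cdot (d\qv{B}\,\OL{h})$, the integrand on the right becomes
\begin{align}
f\!\left(T^{h(\ol{B})}(\ol{B})\right) - \int_0^1 \OL{h}_s\cdot d\ol{B}_s - \tfrac{1}{2}\int_0^1 \OL{h}_s \cdot (d\qv{\ol{B}}_s\,\OL{h}_s).
\end{align}
\lref{l:sym}(ii) with $\OL{h}$ in place of $h$ gives $\E^{\OL{h}}\!\left[\pm \int_0^1 \OL{h}\cdot d\ol{B}\right] = 0$, and combining the two-sided bound with the subadditivity of $\E^{\OL{h}}$ eliminates the middle term. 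Then \rref{rem:gir} applied to $F(x) := f(T^h(x)) - \tfrac{1}{2}\int_0^1 h(x)_s\cdot(d\qv{x}_s\,h(x)_s)$, which lies in $\calL^1_G(\Omega)$ by \pref{;comp} together with the existence of $\int h\cdot d\qv{B}\,h$ in $\calL^1_G(\Omega)$, transfers $\E^{\OL{h}}[F(\ol{B})]$ to $\E[F(B)]$, recovering the desired right-hand side for $h$.

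To upgrade from $h \in (\sbl)^d$ to general $h \in (\MG)^d$, I would approximate $h$ by $h_n \in (\sbl)^d$ with $\|h_n - h\|_{\MGR} \to 0$, and use the q.s.\ estimate $\left|\int_0^t d\qv{B}_s\,k_s\right| \le \sigma_1^2 \int_0^1 |k_s|\,ds$ (from \eqref{;qv}) combined with the Lipschitz continuity of $f \in \bLip(\Omega)$ and Cauchy--Schwarz in $\MG$ to show that $\E\!\left[f(T^{h_n}(B)) - \tfrac{1}{2}\int_0^1 h_n \cdot (d\qv{B}_s\,h_n)\right]$ converges to its $h$-counterpart in $\calL^1_G$-norm sense, so the inequality persists in the limit.

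The main obstacle is the symmetric absorption of the $d\ol{B}$-integral under the sublinear $\E^{\OL{h}}$: it rests on the identity $\OL{h} = h(\ol{B})$ hard-wired into the recursive, time-adapted construction \eqref{;barh}. Without this identification, plugging $h$ itself into \lref{lem:l1} would not yield an integrand whose Girsanov transfer produces precisely $\E[F(B)]$, nor would \lref{l:sym}(ii) be directly applicable to eliminate the $\int \OL{h}\cdot d\ol{B}$ term. A secondary check is verifying the integrability hypothesis $F(B) \in \calL^1_G(\Omega)$ needed by \rref{rem:gir}, which reduces to \pref{;comp} plus the boundedness of $\OL{h}$ in the $(\sbl)^d$ case.
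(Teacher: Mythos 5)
Your proposal is correct and follows essentially the same route as the paper: apply \lref{lem:l1} to the companion simple process $\OL{h}$, eliminate the symmetric $d\ol{B}$-integral via \lref{l:sym}(ii), transfer back through Girsanov's formula using the identity \eqref{;barh}, and then pass from $(\sbl)^d$ to $(\MG)^d$ by density with the Lipschitz/Cauchy--Schwarz estimates. The only cosmetic difference is that you cite \pref{;comp} for the integrability needed in \rref{rem:gir}, where the paper appeals directly to $f \in \bLip(\Omega)$; both are valid since \pref{;comp} precedes this lemma.
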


\begin{proof}
	It is sufficient to show
	\begin{align}\label{eq:bLiph}
		\log \E \left[ e^{f(B)} \right]
		\ge  
		\E 
		\left[
		f \left( T^h (B) \right)
		- \frac{1}{2} \int_0^1 
		h_s \cdot \left( d \qv{B}_s \, h_s \right) 
		\right]
	\end{align}
	for all $h \in ( \MG )^d$.

	First we let $h \in ( \sbl )^d$. 
	Let $\OL{h}$ be the associated element of $( M^{2,0}_G (0,1) )^d$ 
	as constructed above so that \eqref{;barh} holds. 
	By the boundedness of $\OL{h}$, \lref{lem:l1} implies
	\begin{align}
		\log \E \left[ e^{f(B)} \right]
		&\ge 
		\E^{\OL{h}} 
		\left[ 
		f(B)
		- \frac{1}{2} \int_0^1 
		\OL{h}_s \cdot \left( d \qv{B}_s \, \OL{h}_s \right)
		- \int_0^1 \OL{h}_s \cdot d \ol{B}_s \right] \\
		&= 
		\E^{\OL{h}} 
		\left[ 
		f(B)
		- \frac{1}{2} \int_0^1 
		\OL{h}_s \cdot \left( d \qv{B}_s \, \OL{h}_s \right) 
		\right] ,
	\end{align}
	where $\ol{B} := T^{-\OL{h}} (B)$ 
	and the equality follows from (ii) of \lref{l:sym}. 
	By \eqref{;barh} and the obvious identity $\qv{B} = \qv{\ol{B}}$, 
	we can rewrite the right-hand side as 
	\begin{align}
		\E^{\OL{h}}
		\left[ 
		f \left( \ol{B} + \int_0^{\cdot} d \qv{\ol{B}}_s \, h_s ( \ol{B} ) \right)
		- 
		\frac{1}{2} \int_0^1 
		h_s ( \ol{B} ) \cdot \left( d \qv{ \ol{B} }_s \, h_s ( \ol{B} ) \right) 
		\right] .
		\label{olh}
	\end{align}
	By the boundedness of $\OL{h}$, 
	\pref{;nov} implies that $D^{( \OL{h} )}$ 
	is a symmetric $G$-martingale, 
	and hence by Girsanov's formula for $G$-Brownian motion (\tref{;gir}), 
	$\ol{B}$ is a $G$-Brownian motion on 
	$( \Omega , \calL^{1, (\OL{h})}_G ( \Omega ) , \E^{\OL{h}})$.
	Since $f \left( B + \int_0^{\cdot} d \qv{B}_s \, h_s (B) \right)$
	is in $\calL^1_G ( \Omega )$ by the assumption $f \in \bLip ( \Omega )$ 
	and $\int_0^1 h_s (B) \cdot \left( d \qv{B}_s \, h_s (B) \right)$
	is also in $\calL^1_G ( \Omega )$, 
	we see from \eqref{eq:gir} in \rref{rem:gir} that \eqref{olh} is equal to
	\begin{align} 
		\E 
		\left[ 
		f \left( B+ \int_0^{\cdot} d\qv{B}_s \, h_s \right)
		-
		\frac{1}{2} \int_0^1 h_s \cdot \left( d\qv{B}_s \, h_s \right) 
		\right] .
	\end{align}
	Therefore \eqref{eq:bLiph} holds for $h \in ( \sbl )^d$.

	Now we let $h \in ( \MG )^d$ and take a sequence 
	$\{ h^n \}_{n \in \N} \subset ( \sbl )^d$ such that
	\begin{align}\label{eq:l4}
		\left\| h - h^n \right\|_{ \MGR } \to 0 \qn{as } n \to \infty .
	\end{align}
	As seen above, \eqref{eq:bLiph} holds for every $h^n$, 
	from which it follows that
	\begin{align}
			\log \E \left[ e^{f(B)} \right]
			\ge \,
			&\E \left[
			f \left( T^h (B) \right)
			-\frac{1}{2} \int_0^1 h_s \cdot \left( d\qv{B}_s \, h_s \right)
			\right] \\
			&- \E \left[
			f \left( T^h (B) \right)
			-f \left( T^{h^n} (B) \right)
			\right] \\
			&- \frac{1}{2} \E \left[
			\int_0^1 h^n_s \cdot \left( d\qv{B}_s \, h^n_s \right)
			- \int_0^1 h_s \cdot \left( d\qv{B}_s \, h_s \right)
			\right] .
		\label{eq:l3}
	\end{align}
	Therefore it suffices to show that the second and third terms 
	in the right-hand side of \eqref{eq:l3} tend to $0$ as $n \to \infty$.
	For the second term, since $f$ is Lipschitz,
	\begin{align}
		\left| \E \left[
		f \left( T^h (B) \right)
		-f \left( T^{h^n} (B) \right)
		\right] \right|
		&\le \lip (f) \ol{\E} \left[ \sup_{ 0 \le t \le 1} \left|
		\int_0^t d\qv{B}_s \left( h_s - h^n_s \right) 
		\right| \right] \\
		&\le \lip (f) \sigma_1^2 
		\left\|  h - h^n \right\|_{ \MGR } , \label{eq:l5}
	\end{align}
	where in the last line, 
	we use \eqref{;qv} and the Cauchy-Schwarz inequality. 
	On the other hand, for the third term, we also have
	\begin{align}
		&\left| \E \left[
		\int_0^1 h^n_s \cdot \left( d\qv{B}_s \, h^n_s \right)
		- \int_0^1 h_s \cdot \left( d\qv{B}_s \, h_s \right)
		\right] \right| \\
		&\le
		\E \left[ \left|
		\int_0^1
		\left( h^n_s + h_s \right)
		\cdot \left( d\qv{B}_s \left( h^n_s - h_s \right) \right)
		\right| \right] \\
		&\le
		\sigma^2_1
		\| h^n + h \|_{ \MGR }
		\| h^n - h \|_{ \MGR } .
		\label{eq:l6}
	\end{align}
	Since $\{ h^n \}$ is an approximate sequence of $h$, 
	\eqref{eq:l5} and \eqref{eq:l6} tend to $0$ as $n \to \infty$.
	Therefore \eqref{eq:bLiph} is valid for $h \in ( \MG )^d$ 
	and we complete the proof.
\end{proof}

We are ready to prove the lower bound in \tref{;tmain}.

\begin{prop}\label{lem:lCb}
	\eqref{Lbound} holds for all bounded elements $f$ of $\calL^1_G ( \Omega )$.
\end{prop}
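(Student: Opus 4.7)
The plan is to extend \lref{lem:lbLip} from $\bLip ( \Omega )$ to bounded elements of $\calL^1_G ( \Omega )$ by density and truncation, much as in the proof of \pref{;comp}. I would pick a sequence $\{ f_n \} \subset \bLip ( \Omega )$ with $\E [ | f_n (B) - f(B) | ] \to 0$ and, by truncating, assume $\| f_n \|_{\infty} \le \| f \|_{\infty} =: M$ for every $n$. Fix $h \in ( \MG )^d$ arbitrarily. \lref{lem:lbLip} then yields
\begin{align}
\log \E \left[ e^{f_n (B)} \right]
\ge
\E \left[ f_n \left( T^h (B) \right) - \frac{1}{2} \int_0^1 h_s \cdot \left( d \qv{B}_s \, h_s \right) \right]
\end{align}
for each $n$, and \eqref{Lbound} follows by passing $n \to \infty$ and then taking the supremum over $h$.

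For the left-hand side, \rref{rem:Lip} applied to the exponential truncation at level $M$ gives $| e^{f_n (B)} - e^{f(B)} | \le e^M | f_n (B) - f(B) |$ q.s., so by sublinearity $| \E [ e^{f_n (B)} ] - \E [ e^{f(B)} ] | \le e^M \E [ | f_n (B) - f(B) | ] \to 0$. Since both $\E [ e^{f_n (B)} ]$ and $\E [ e^{f(B)} ]$ lie in the interval $[ e^{-M}, e^M ]$, applying \rref{rem:Lip} once more, now to the logarithmic truncation, yields $\log \E [ e^{f_n (B)} ] \to \log \E [ e^{f(B)} ]$.

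For the right-hand side, the quadratic variation term is independent of $n$, so sublinearity of $\E$ reduces matters to showing $\E [ | f_n ( T^h (B) ) - f ( T^h (B) ) | ] \to 0$. This is essentially what is established inside the proof of \pref{;comp}: decomposing on $A_n := \{ | f_n - f | > \ve \}$, the $A_n^c$ contribution is at most $\ve$, while on $A_n$ one uses the uniform q.s.\ bound $| f_n ( T^h (B) ) |, | f ( T^h (B) ) | \le M$ (from \pref{;abs}) together with Chebyshev's inequality, \eqref{;char3}, and the absolute continuity of \pref{;abs} to force $c ( T^h (B) \in A_n ) < \ve$ for all sufficiently large $n$. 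This $G$-expectation convergence of $f_n ( T^h (B) )$ to $f ( T^h (B) )$ is the only substantive step; everything else is a routine chase through sublinearity and Lipschitz estimates.
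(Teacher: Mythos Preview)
Your proposal is correct and follows essentially the same approach as the paper: approximate $f$ by $f_n \in \bLip(\Omega)$ with $\|f_n\|_\infty \le \|f\|_\infty$, apply \lref{lem:lbLip} to each $f_n$, and pass to the limit using \rref{rem:Lip} for the left-hand side and the argument from the proof of \pref{;comp} for the right-hand side. The only cosmetic difference is that the paper combines your two applications of \rref{rem:Lip} into a single estimate $\bigl|\log \E[e^{f(B)}] - \log \E[e^{f_n(B)}]\bigr| \le e^{2\|f\|_\infty}\,\E[|f(B)-f_n(B)|]$.
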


\begin{proof}
	Fix $h \in ( \MG )^d$ arbitrarily
	and let $\{ f_n \}_{n \in \N}$ be as in the proof of \pref{;comp}. 
	\lref{lem:lbLip} implies that
	\begin{align}
		&\log \E \left[ e^{f(B)} \right]
		- \E 
		\left[ 
		f \left( T^h (B) \right)
		- \frac{1}{2} \int_0^1 h_s \cdot \left( d \qv{B}_s \, h_s \right) 
		\right] \\
		&\ge 
		\log \E \left[ e^{f(B)} \right] 
		- \log \E \left[ e^{ f_n (B)} \right]
		- \E 
		\left[
		\left| 
		f \left( T^h (B) \right) - f_n \left( T^h (B) \right) 
		\right| 
		\right] .
		\label{eq:krrn}
	\end{align}
	As seen in the proof of \pref{;comp}, 
	the third term in the right-hand side converges to $0$ as $n \to \infty$. 
	By taking $M = \| f \|_{\infty}$ in \rref{rem:Lip}, we also have
	\begin{align}
		\left|
		\log \E \left[ e^{f(B)} \right] 
		- 
		\log \E \left[ e^{ f_n (B)} \right]
		\right| 
		\le 
		e^{ 2 \| f \|_{\infty} } 
		\E \left[ | f(B) - f_n (B) | \right]
		\to 0 
		\label{expLip}
	\end{align} 
	as $n \to \infty$, and hence obtain the proposition. 
\end{proof}

\subsection{Proof of the upper bound}\label{;prfub}

In this subsection, we prove the upper bound in \tref{;tmain}:
for any bounded elements $f$ of $\calL^1_G ( \Omega )$,
\begin{align}\label{Ubound}
	\hspace*{-5mm}
	\log \E \left[ e^{f(B)} \right]
	\le 
	\sup_{h \in ( \MG )^d} \E \left[
	f \left( B + \int_0^{\cdot} d \qv{B}_s \, h_s \right)
	- \frac{1}{2} \int_0^1 
	h_s \cdot \left( d \qv{B}_s \, h_s \right) \right].
\end{align}

Proofs of the next two lemmas proceed 
as those of Lemma~2.1 and Theorem~2.2 in Chapter~IV of \cite{Peng;10}.
\begin{lem}\label{lem:u1}
	Let $0 \le t_1 \le 1$.
	For every $\vp \in \bLip ( \R^d )$, there exist a bounded $h \in ( \MG )^d$ 
	and an $A \in \calL^2_G ( \Omega )$ with $A \ge 0$ q.s.\ such that
	\begin{align}
		\vp (B_1)
		= 
		\log \E_{t_1} \left[ e^{ \vp (B_1)} \right]
		+ \int_{t_1}^1 h_s \cdot d B_s
		- \frac{1}{2} \int_{t_1}^1 h_s \cdot \left( d \qv{B}_s \, h_s \right)
		- A 
	\end{align}
	in $\calL^2_G ( \Omega)$, 
	where $\E_{t_1}$ is the conditional $G$-expectation defined by \eqref{e:CGE}.
\end{lem}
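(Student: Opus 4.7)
The plan is to obtain the decomposition via a nonlinear Feynman--Kac identity for the $G$-heat equation. Let $v(t,x)$ be the (backward) solution of $\partial_t v+G(D^2 v)=0$ on $[0,1]\times\R^d$ with terminal condition $v(1,\cdot)=e^{\vp}$; concretely, $v(t,x):=u_{e^{\vp}}(1-t,x)$ where $u_{e^{\vp}}$ solves \eqref{;gheat} with initial data $e^{\vp}$. The Markov structure encoded in \eqref{e:CGE} yields $\E_{t_1}[e^{\vp(B_1)}]=v(t_1,B_{t_1})$, and $v\in C^{1,2}$ by \rref{hitaika} (we are in the regime $\sigma_0>0$ standing throughout Section~\ref{;pr2}). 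Two a priori bounds on $v$ will drive the argument: the two-sided estimate $e^{-\|\vp\|_\infty}\le v\le e^{\|\vp\|_\infty}$, and a uniform-in-$t$ Lipschitz estimate on $v(t,\cdot)$ inherited from $\vp\in\bLip(\R^d)$, giving boundedness of $Dv$.

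With this $v$ in hand, I apply $G$-It\^o's formula to $u(t,B_t):=\log v(t,B_t)$. Using the pointwise identities $\partial_t u=-G(D^2 v)/v$, $Du=Dv/v$, and $D^2 u=D^2 v/v-Du\,(Du)^{*}$ and substituting, the expansion reorganizes into
\begin{align}
du(t,B_t)=h_t\cdot dB_t-\frac{1}{2}h_t\cdot(d\qv{B}_t\,h_t)-dA_t,
\end{align}
where
\begin{align}
h_t:=\frac{Dv(t,B_t)}{v(t,B_t)},\qquad
dA_t:=\frac{1}{v(t,B_t)}\left(G(D^2 v(t,B_t))\,dt-\frac{1}{2}\tr\left[D^2 v(t,B_t)\,d\qv{B}_t\right]\right).
\end{align}
By the definition \eqref{;gen} of $G$, one has $\tfrac12\tr[A\gamma\gamma^{*}]\le G(A)$ for every $\gamma\in\Theta$ and every symmetric $A$; together with \eqref{;qv}, which says $d\qv{B}_t/dt=\gamma\gamma^{*}$ for some $\gamma\in\Theta$ q.s.\ a.e.\ in $t$, this forces $dA_t\ge0$, and hence $A:=A_1-A_{t_1}\ge 0$ q.s.\ upon integration from $t_1$ to $1$. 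Using $u(1,B_1)=\vp(B_1)$ and $u(t_1,B_{t_1})=\log\E_{t_1}[e^{\vp(B_1)}]$ gives the claimed decomposition.

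It remains to verify $h\in(\MG)^{d}$ with $h$ bounded, and $A\in\calL^2_G(\Omega)$. Boundedness of $h$ is immediate from the two-sided bound on $v$ combined with the Lipschitz estimate on $v(t,\cdot)$; this places both $\int_{t_1}^{1}h_s\cdot dB_s$ and $\int_{t_1}^{1}h_s\cdot(d\qv{B}_s\,h_s)$ inside $\calL^2_G(\Omega)$. Since $\vp(B_1)$ and $\log v(t_1,B_{t_1})$ are bounded, rearranging the decomposition exhibits $A$ as a sum of elements of $\calL^2_G(\Omega)$. The main technical obstacle I anticipate is establishing the uniform-in-$t$ Lipschitz regularity of $v(t,\cdot)$ and the legitimacy of $G$-It\^o's formula applied to the composition $\log v$; both are standard for the $G$-heat equation under $\sigma_0>0$ and are developed in Peng~\cite{Peng;10}, Chapter~IV, which the remark preceding the lemma explicitly invokes.
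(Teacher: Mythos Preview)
Your approach matches the paper's exactly: define the backward solution $v$ (the paper's $u$), take its logarithm, apply $G$-It\^o's formula, and read off $h=\nabla\log v(\cdot,B_\cdot)$ and the nondecreasing $A$. The only place the paper works harder than your sketch anticipates is the It\^o step: because $D^2 v$ need not be bounded up to $t=1$ (the terminal datum $e^{\vp}$ is only Lipschitz), the paper invokes the interior estimate $\|v\|_{C^{1+\alpha/2,\,2+\alpha}([0,1-\ve]\times\R^d)}<\infty$, applies $G$-It\^o on $[t_1,1-\ve]$, and then passes $\ve\to 0$ term by term in $\calL^2_G(\Omega)$ to obtain the decomposition on $[t_1,1]$.
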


\begin{proof}
	If we let
	\begin{align}
		u(t,x)
		:= \E \left[ e^{ \vp (B_1 - B_t +x) } \right]
	\end{align}
	for $(t,x) \in [0,1] \times \R^d$, 
	then by the assumption $\sigma_0 >0$, 
	$u$ is the $C^{1,2} \left( (0,1) \times \R^d \right)$-solution of
	\begin{align}\label{eq:bG}
		\left\{
		\begin{aligned}
		&\fp{u}{t} + G \left( D^2 u \right) =0 
		& & \n{in } (0,1) \times \R^d , \\
		&u (1,x) = e^{\vp (x)} 
		& & \n{for } x \in \R^d ;
		\end{aligned}
		\right.
	\end{align}
	see \rref{hitaika}.
	Observe that by letting $| \vp | := \sup_{x \in \R^d} | \vp (x) |$, 
	\begin{align}\label{eq:u0}
		0< e^{- | \vp |} \le u(t,x) \le e^{| \vp |} < \infty
	\end{align}
	for any $(t,x)$, and that by the Lipschitz continuity of $\vp$,
	\begin{align}\label{eq:u1}
		\sup_{(t,x) \in (0,1) \times \R^d} \left| \nabla u(t,x) \right| < \infty,
	\end{align}
	where $\nabla := \left( \fp{}{x^1} \ddd \fp{}{x^d} \right)^*$.
	Additionally,
	we note that by Theorem~4.5 in Appendix~C in \cite{Peng;10},
	there exists an $\alpha \in (0,1)$ such that
	\begin{align}\label{eq:u2}
		\| u \|
		_{C^{1+ \frac{\alpha}{2} ,2+ \alpha } ( [0,1- \ve ] \times \R^d )}
		< \infty \qn{for every } \ve \in (0,1).
	\end{align}
	Here
	\begin{align}
		&\| u \|
		_{C^{ \frac{\alpha}{2} , \alpha } ( [0,1- \ve ] \times \R^d )}
		:= 
		\sup_{
		\begin{subarray}{c} 
			x,y \in \R^d , \ x \neq y \\ s,t \in [0,1- \ve ], \ s \neq t 
		\end{subarray}
		}
		\frac{ \left| u(s,x) - u(t,y) \right| }
		{ \left| s-t \right|^{\alpha /2} + \left| x-y \right|^{\alpha} }, \\
		&\| u \|
		_{C^{1+ \frac{\alpha}{2} , 2 + \alpha } ( [0,1- \ve ] \times \R^d )}\\
		&:= 
		\| u \|
		_{C^{ \frac{\alpha}{2} , \alpha } ( [0,1- \ve ] \times \R^d )}
		+ 
		\left\| \fp{u}{t} \right\|
		_{C^{ \frac{\alpha}{2} , \alpha } ( [0,1- \ve ] \times \R^d )}
		+ 
		\sum_{i=1}^d \left\| \fp{u}{x^i} \right\|
		_{C^{ \frac{\alpha}{2} , \alpha } ( [0,1- \ve ] \times \R^d )} \\
		&+ 
		\sum_{i,j=1}^d \left\| \fp{^2 u}{x^i \partial x^j} \right\|
		_{C^{ \frac{\alpha}{2} , \alpha } ( [0,1- \ve ] \times \R^d )}.
	\end{align}
	Set $U (t,x) := \log u (t,x)$.
	Then $U$ is also a member of $C^{1,2} \left( (0,1) \times \R^d \right)$ and
	\begin{align}\label{eq:u3}
		\left\{
		\begin{aligned}
			&\fp{U}{t} (t,x) = \frac{1}{u (t,x)} \fp{u}{t} (t,x) ,\\
			&\nabla U(t,x) = \frac{1}{u (t,x)} \nabla u(t,x), \\
			&D^2 U(t,x) = -\nabla U(t,x) \left( \nabla U(t,x) \right)^*
			+ \frac{1}{u(t,x)} D^2 u(t,x) .
		\end{aligned}
		\right.
	\end{align}
	Therefore we have by \eqref{eq:u0} and \eqref{eq:u2},
	\begin{align}
		\| U \|
		_{C^{1+ \frac{\alpha}{2} ,2+ \alpha } ( [0,1- \ve ] \times \R^d )} < \infty
		\qn{for any } \ve \in (0,1),
	\end{align}
	which allows us to apply $G$-It\^o's formula 
	\cite[Theorem~6.5 in Chapter~III]{Peng;10} 
	to $U(t, B_t)$ on $[t_1, 1- \ve ]$ for $0< \ve < 1-t_1$.
	Then we have
	\begin{align}
		U(1- \ve , B_{1- \ve})
		&= U(t_1 , B_{t_1})
		+ \int_{t_1}^{1- \ve} \fp{U}{t} (s, B_s) \, d s
		+ \int_{t_1}^{1- \ve} \nabla U(s, B_s) \cdot d B_s\\
		&+ \frac{1}{2} \int_{t_1}^{1- \ve} 
		\tr \left[ d \qv{B}_s \, D^2 U(s, B_s) \right]
		\qn{in } \calL^2_G ( \Omega ).
	\end{align}
	By \eqref{eq:u3}, together with \eqref{eq:bG}, we obtain
	\begin{align}\label{eq:u4}
		\begin{aligned}
			&U(1- \ve , B_{1- \ve})\\
			&= U(t_1 , B_{t_1})
			+ \int_{t_1}^{1- \ve} h_s \cdot d B_s
			- \frac{1}{2} \int_{t_1}^{1- \ve} 
			h_s \cdot \left( d \qv{B}_s \, h_s \right)
			- A_{1- \ve} ,
		\end{aligned}
	\end{align}
	where
	\begin{align}
		&h_t := \nabla U(t, B_t), \qd 0 \le t < 1,\\
		&A_t := \int_{t_1}^t
		\frac{ G \left( D^2 u(s, B_s) \right) }{ u(s, B_s)} d s
		- \int_{t_1}^t
		\frac{1}{2}
		\frac{ \tr \left[ d \qv{B}_s \, D^2 u(s, B_s) \right] }{ u(s, B_s)} ,
		\qd t_1 \le t \le 1.
	\end{align}
	Note that $\{ A_t ; t_1 \le t \le 1 \}$ 
	is a nondecreasing process with $A_{t_1} =0$
	and each term of \eqref{eq:u4} is an element of $\calL^2_G ( \Omega )$.
	For the left-hand side of \eqref{eq:u4}, we first note that by \rref{rem:Lip},
	\begin{align}
		&\left| u(s,x) - u(t,x) \right|
		\le
		e^{| \vp |} \lip ( \vp ) 
		\E \left[ \left| B_s - B_t \right| \right] , \\
		&\left| u(t,x) - u(t,y) \right|
		\le
		e^{| \vp |} \lip ( \vp ) |x-y|
	\end{align}
	for all $0 \le s,t \le 1$ and $x,y \in \R^d$. 
	Again by \rref{rem:Lip} and by the above estimates, 
	\begin{align}
		&\left| U(1, B_1) - U(1- \ve , B_{1- \ve}) \right| \\
		&\le e^{| \vp |} \left| u(1, B_1) - u(1- \ve , B_{1- \ve}) \right| \\
		&\le e^{2 | \vp |} \lip ( \vp ) \Big\{
		\E \left[ \left| B_1 - B_{1- \ve} \right| \right] 
		+ \left| B_1 - B_{1- \ve} \right| \Big\}\\
		& \to 0 \qn{as } \ve \to 0 \qn{in } \calL^2_G ( \Omega ). \label{eq:u5}
	\end{align}
	For the second and third terms in the right-hand side of \eqref{eq:u4},
	since $\nabla U$ is bounded because 
	of \eqref{eq:u0}, \eqref{eq:u1} and \eqref{eq:u3}, we have
	\begin{align}\label{eq:u6}
		\int_{1- \ve}^1 h_s \cdot d B_s \to 0 \qn{and} \qd
		\int_{1- \ve}^1 h_s \cdot \left( d \qv{B}_s \, h_s \right) \to 0
	\end{align}
	as $\ve \to 0$ in $\calL^2_G ( \Omega )$.
	For the convergence of $A_{1- \ve}$,
	it is clear that $A_{1- \ve} \to A := A_1$ as $\ve \to 0$ 
	$P_{\theta}$-a.s.\ for every $\theta \in \cA$.
	Moreover, by \eqref{eq:u5} and \eqref{eq:u6},
	\begin{align}
		A_{1- \ve} \to
		- U(1, B_1) + U(t_1 , B_{t_1})
		+ \int_{t_1}^1 h_s \cdot dB_s
		- \frac{1}{2} \int_{t_1}^1 h_s \cdot \left( d \qv{B}_s \, h_s \right)
	\end{align}
	as $\ve \to 0$ in $\calL^2_G ( \Omega )$.
	For every $\theta \in \cA$, 
	taking a subsequence if necessary, we see that this convergence also holds 
	$P_\theta$-a.s.
	Therefore we have
	\begin{align}\label{eq:u7}
		\hspace*{-3mm}
		A = -U(1, B_1) + U(t_1 , B_{t_1})
		+ \int_{t_1}^1 h_s \cdot dB_s
		- \frac{1}{2} \int_{t_1}^1 h_s \cdot \left( d \qv{B}_s h_s \right)
	\end{align}
	$P_{\theta}$-a.s.\ for all $\theta \in \cA$.
	Since all terms in the right-hand side of \eqref{eq:u7} are in $\calL^2_G ( \Omega )$, 
	$A$ is also in $\calL^2_G ( \Omega )$.
	As a consequence, the equality \eqref{eq:u7} holds in $\calL^2_G ( \Omega )$.
	By noting that
	\begin{align}
		&U(1, B_1) = \vp (B_1), \\
		&U(t_1, B_{t_1})
		= \log \E \left. \left[ e^{\vp (B_1 - B_{t_1} +x)} \right] \right|_{x = B_{t_1}}
		= \log \E_{t_1} \left[ e^{\vp (B_1)} \right],
	\end{align}
	we finally obtain
	\begin{align}
		\vp (B_1) = \log \E_{t_1} \left[ e^{\vp (B_1)} \right]
		+ \int_{t_1}^{1} h_s \cdot d B_s
		- \frac{1}{2} \int_{t_1}^1 h_s \cdot \left( d \qv{B}_s \, h_s \right)
		- A
	\end{align}
	in $\calL^2_G ( \Omega )$.
\end{proof}

The following lemma is a type of the Clark-Ocone formula 
in the framework of $G$-expectation space.

\begin{lem}\label{lem:u2}
	For every $f \in \bLip ( \Omega )$, there exist a bounded $h \in ( \MG )^d$
	and an $A \in \calL^2_G ( \Omega )$ with $A \ge 0$ q.s.\ such that
	\begin{align}\label{eq:it}
		f(B)
		= \log \E \left[ e^{ f(B) } \right]
		+ \int_0^1 h_s \cdot d B_s
		- \frac{1}{2} \int_0^1 h_s \cdot \left( d \qv{B}_s \, h_s \right)
		- A
		\qn{in } \calL^2_G ( \Omega ).
	\end{align}
\end{lem}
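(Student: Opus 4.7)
The plan is to iterate \lref{lem:u1} across the finite time mesh on which $f$ is defined. Write $f(B) = \vp(B_{t_1}, \ldots, B_{t_n})$ for some $0 = t_0 < t_1 < \cdots < t_n \le 1$ and $\vp \in \bLip((\R^d)^n)$, and define recursively
\begin{align}
u_n(x_1, \ldots, x_n) := e^{\vp(x_1, \ldots, x_n)}, \qquad
u_{k-1}(x_1, \ldots, x_{k-1}) := \E[u_k(x_1, \ldots, x_{k-1}, B_{t_k - t_{k-1}} + x_{k-1})]
\end{align}
for $k = n, n-1, \ldots, 1$. \rref{rem:Lip} and the sublinearity of $\E$ show by induction that each $u_k$ is bounded Lipschitz with $e^{-|\vp|} \le u_k \le e^{|\vp|}$, so $\log u_k$ is bounded Lipschitz as well. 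From \eqref{e:CGE} one has $\E_{t_k}[e^{f(B)}] = u_k(B_{t_1}, \ldots, B_{t_k})$; in particular $u_0 = \E[e^{f(B)}]$ and $\log u_n(B_{t_1}, \ldots, B_{t_n}) = f(B)$.

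Next I would mimic the proof of \lref{lem:u1} on each sub-interval $[t_{k-1}, t_k]$ with $(x_1, \ldots, x_{k-1})$ playing the role of parameters. For $(t, x_1, \ldots, x_{k-1}, x) \in [t_{k-1}, t_k] \times \R^{dk}$, set
\begin{align}
U_k(t, x_1, \ldots, x_{k-1}, x) := \E[u_k(x_1, \ldots, x_{k-1}, B_{t_k - t} + x)];
\end{align}
for each fixed $(x_1, \ldots, x_{k-1})$ this is the bounded $C^{1,2}$-solution of the (time-reversed) $G$-heat equation on $(t_{k-1}, t_k) \times \R^d$ with terminal data $u_k(x_1, \ldots, x_{k-1}, \cdot)$, and it satisfies $U_k(t_k, \cdot) = u_k$ and $U_k(t_{k-1}, x_1, \ldots, x_{k-1}, x_{k-1}) = u_{k-1}(x_1, \ldots, x_{k-1})$. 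Applying $G$-It\^o's formula to $\log U_k(t, B_{t_1}, \ldots, B_{t_{k-1}}, B_t)$ on $[t_{k-1}, t_k - \ve]$ and sending $\ve \to 0$ exactly as in the proof of \lref{lem:u1} yields, in $\calL^2_G(\Omega)$,
\begin{align}
\log u_k(B_{t_1}, \ldots, B_{t_k}) = \log u_{k-1}(B_{t_1}, \ldots, B_{t_{k-1}}) + \int_{t_{k-1}}^{t_k} h^{(k)}_s \cdot dB_s - \frac{1}{2} \int_{t_{k-1}}^{t_k} h^{(k)}_s \cdot (d\qv{B}_s \, h^{(k)}_s) - A^{(k)},
\end{align}
where $h^{(k)}_s := \nabla_x \log U_k(s, B_{t_1}, \ldots, B_{t_{k-1}}, B_s)$ is bounded on $[t_{k-1}, t_k)$ (the bound being inherited from the uniform-in-parameters Lipschitz constant of $\log u_k$ via \eqref{eq:u1} and \eqref{eq:u3}) and $A^{(k)} \in \calL^2_G(\Omega)$ is nonnegative q.s.

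Finally, define $h_s := h^{(k)}_s$ for $s \in [t_{k-1}, t_k)$ and $A := \sum_{k=1}^n A^{(k)}$; then $h \in (\MG)^d$ is bounded, $A \in \calL^2_G(\Omega)$ satisfies $A \ge 0$ q.s., and summation over $k = 1, \ldots, n$ telescopes the $\log u_{k-1}$ contributions and produces \eqref{eq:it}. The main technical obstacle is the invocation of $G$-It\^o's formula when the parameters $(B_{t_1}, \ldots, B_{t_{k-1}})$ are random (though $\calL^2_G(\Omega_{t_{k-1}})$-measurable) rather than deterministic; one can handle this either by working on the enlarged state space $\R^{dk}$ for the frozen-coordinate process $(B_{t_1 \wedge t}, \ldots, B_{t_{k-1} \wedge t}, B_t)$, whose first $k-1$ coordinates are constant on $[t_{k-1}, t_k]$, or by verifying the identity $P_\theta$-a.s.\ for every $\theta \in \cA$ and concluding via the upper expectation representation \eqref{;char2}. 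Nonnegativity of $A^{(k)}$ is, as in \lref{lem:u1}, immediate from the definition \eqref{;gen} of $G$ together with \eqref{;qv}.
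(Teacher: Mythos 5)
Your proposal is correct and follows essentially the same route as the paper: the paper reduces to the two-point cylinder case $f(B)=\vp(B_{t_1},B_1)$ and performs exactly your parameter-frozen application of \lref{lem:u1} on each sub-interval (handling the substitution of the random parameter $B_{t_1}$ by appealing to the construction of the stochastic integrals), then telescopes via \eqref{e:GE}. Your version simply writes out the full induction over an $n$-point mesh rather than stopping at $n=2$.
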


\begin{proof}
	Let $0 \le t_1 \le 1$ and $\vp \in \bLip ((\R^d)^2)$.
	It suffices to show the lemma holds when $f(B) = \vp (B_{t_1} , B_1)$.
	Set
	\begin{align}
		u(t,x,y) := \E \left[ e^{\vp (x, B_1- B_t +y)} \right], 
		\qd U(t,x,y) := \log u(t,x,y)
	\end{align}
	for $(t,x,y) \in [0,1] \times \R^d \times \R^d$.
	By \lref{lem:u1}, we have for every $x \in \R^d$,
	\begin{align}
		\hspace*{-2mm}
		U(1,x, B_1)
		= 
		&\log \E 
		\left. \left[ 
		e^{\vp (x, B_1 - B_{t_1} +y)} 
		\right] \right|_{y= B_{t_1}} 
		+ \int_{t_1}^1 \nabla_y U(s,x, B_s) \cdot d B_s \\
		&-\frac{1}{2} \int_{t_1}^1 
		\nabla_y U(s,x, B_s) \cdot \left( d \qv{B}_s \, \nabla_y U(s,x, B_s) \right)\\
		&- 
		\left( 
		\int_{t_1}^1
		\frac{ G \left( D^2_y u(s,x, B_s) \right) }{ u(s,x, B_s) } \, d s
		- \int_{t_1}^1 \frac{1}{2} 
		\frac{ \tr \left[ d \qv{B}_s \, D^2_y u(s,x, B_s) \right] }{ u(s,x, B_s) } 
		\right)
	\end{align}
	in $\calL^2_G ( \Omega )$,
	where $\nabla_y := \left( \fp{}{y^1} \ddd \fp{}{y^d} \right)^*$ 
	and $D^2_y := \left( \fp{^2}{y^i \partial y^j} \right)_{i,j =1}^d$.
	By the construction of the integrations 
	with respect to $dB_s$ and $d \qv{B}_s$ (see, e.g., \cite{Peng;08b}),
	this identity still holds with $x$ replaced by $B_{t_1}$.
	Hence, by letting
	\begin{align}
		&\vp_1 (x) := \log \E \left[ e^{\vp (x, B_1 - B_{t_1} +x)} \right], \\
		&h_s := \nabla_y U(s, B_{t_1} , B_s), ~ t_1 \le s <1 ,\\
		&A^{(1)} := \int_{t_1}^1
		\frac{ G \left( D^2_y u(s, B_{t_1} , B_s) \right) }{ u(s, B_{t_1} , B_s) } \, d s
		- \int_{t_1}^1 \frac{1}{2} 
		\frac{ \tr \left[ d \qv{B}_s \, D^2_y u(s,B_{t_1},B_s) \right] }{ u(s,B_{t_1},B_s) } ,
	\end{align}
	we get
	\begin{align}
		\vp (B_{t_1} , B_1)
		&= U(1, B_{t_1} , B_1)\\
		&= \vp_1 (B_{t_1})
		+ \int_{t_1}^1 h_s \cdot d B_s
		- \frac{1}{2} \int_{t_1}^1 h_s \cdot \left( d \qv{B}_s \, h_s \right)
		- A^{(1)} .
	\end{align}
	Since $\vp_1 \in \bLip ( \R^d )$, 
	we may apply \lref{lem:u1} to $\vp_1 (B_{t_1})$
	to obtain $h \in ( \MG )^d$ and $A^{(2)} \in \calL^2_G ( \Omega )$ 
	with $A^{(2)} \ge 0$ q.s.\ such that
	\begin{align}
		&\vp (B_{t_1} , B_1) \\
		&= \log \E \left[ e^{\vp_1 (B_{t_1})} \right]
		+ \int_0^1 h_s \cdot d B_s
		- \frac{1}{2} \int_0^1 h_s \cdot \left( d \qv{B}_s \, h_s \right)
		- \left( A^{(1)} + A^{(2)} \right)
	\end{align}
	in $\calL^2_G ( \Omega )$.
	Noting \eqref{e:GE}, we have 
	\begin{align}
		\E \left[ e^{\vp_1 (B_{t_1})} \right]
		= 
		\E \left[ 
		\E \left. \left[ e^{\vp (x, B_1 - B_{t_1} +x)} \right] \right|_{x=B_{t_1}}
		\right]
		= 
		\E \left[ e^{\vp (B_{t_1} , B_1)} \right] .
	\end{align}
	Therefore the lemma follows.
\end{proof}

Recall that $\calP$ is the weak closure of 
$\{ P_{\theta} : \theta \in \cA \}$; 
by the tightness of $\{ P_{\theta} : \theta \in \cA \}$ 
(\cite[Proposition~50]{Denis;10}), $\calP$ is weakly compact. 

\begin{lem}\label{urem}
	For $f \in \bLip ( \Omega )$, 
	let $A \in \calL^2_G ( \Omega )$ be given in \lref{lem:u2}. 
	Then there exists $P \in \calP$ such that 
	\begin{align}
		A = 0
		\qd P \n{-a.s.}
		\label{Pn}
	\end{align}
\end{lem}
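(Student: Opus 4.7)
The plan is to exponentiate the identity from \lref{lem:u2} and then locate a measure $P \in \calP$ attaining the upper expectation $\E[e^{f(B)}]$, under which the residual $A$ will be forced to vanish. Exponentiation gives, q.s.,
\begin{align}
    e^{f(B)} = \E\left[e^{f(B)}\right] \cdot D^{(h)}_1 \cdot e^{-A},
\end{align}
where $D^{(h)}_1$ is the positive random variable defined in \eqref{Dh} and $A \ge 0$ q.s.

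First I would produce $P^* \in \calP$ with $E_{P^*}[e^{f(B)}] = \E[e^{f(B)}]$. Since $f \in \bLip(\Omega)$ is a bounded Lipschitz cylinder functional, $e^{f(B)}$ is bounded and continuous on $\Omega$, so $P \mapsto E_P[e^{f(B)}]$ is weakly continuous on $\calP$; together with the weak compactness of $\calP$ noted just before the statement, this provides such a $P^*$, and by \eqref{;char3} the value $E_{P^*}[e^{f(B)}] = \E[e^{f(B)}]$ is strictly positive since $e^{f(B)} \ge e^{-\| f \|_{\infty}}$.

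Next I would integrate the exponentiated identity against $P^*$. Since $A \ge 0$ q.s.\ and $D^{(h)}_1 \in \calL^1_G(\Omega)$, both sides are $P^*$-integrable and the identity yields
\begin{align}
    E_{P^*}\left[ D^{(h)}_1 e^{-A} \right] = 1 .
\end{align}
On the other hand, by the boundedness of $h$, \pref{;nov} guarantees that $D^{(h)}$ is a symmetric $G$-martingale, so $\E[ D^{(h)}_1 ] = 1$ and $\E[ -D^{(h)}_1 ] = -1$; via \eqref{;char3} these force $E_P [ D^{(h)}_1 ] = 1$ for \emph{every} $P \in \calP$, in particular for $P^*$. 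Subtracting now gives $E_{P^*}[ D^{(h)}_1 (1 - e^{-A}) ] = 0$, and since the integrand is nonnegative $P^*$-a.s.\ ($A \ge 0$ q.s.\ and $D^{(h)}_1 > 0$ q.s.\ as an exponential), it must vanish $P^*$-a.s., which together with $D^{(h)}_1 > 0$ forces $A = 0$ $P^*$-a.s.

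The main technical point to watch is the transfer of the identity in \lref{lem:u2}, which is stated as an equality in $\calL^2_G(\Omega)$, to a pointwise $P^*$-a.s.\ identity; this is handled by using the q.c.\ versions of each term and the fact that every polar set is $P$-null for all $P \in \calP$, ensured by \eqref{;char3}.
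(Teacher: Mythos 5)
Your proposal is correct and follows essentially the same route as the paper: exponentiate the identity of \lref{lem:u2}, use the symmetric $G$-martingale property of $D^{(h)}$ together with \eqref{;char3}, extract a maximizing measure from the weak compactness of $\calP$, and conclude from the sign of $A$. The only cosmetic difference is that you compare $E_{P^*}[D^{(h)}_1 e^{-A}]$ with $E_{P^*}[D^{(h)}_1]=1$, whereas the paper compares $E_{P'}[e^{f(B)+A}]$ with $E_{P'}[e^{f(B)}]$ — the same inequality read in mirror image.
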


\begin{proof}
	Rewriting \eqref{eq:it} in \lref{lem:u2}, we have
	\begin{align}
		e^{ f(B) + A }
		=
		D^{(h)}_1 \E \left[ e^{f(B)} \right] . 
	\end{align}
	By the boundedness of $h$ and \pref{;nov}, 
	$D^{(h)}$ is a symmetric $G$-martingale and hence 
	the right-hand side is symmetric. 
	Therefore the left-hand side is also symmetric, which yields
	\begin{align}
		\E \left[ e^{ f(B) + A } \right]
		=
		- \E \left[ -e^{ f(B) + A } \right] 
		=
		\E \left[ e^{f(B)} \right] . 
		\label{si}
	\end{align}
	Since every integrand in \eqref{si} is an element of $\calL^1_G ( \Omega )$, 
	we see from \eqref{;char3} that 
	\eqref{si} still holds if $\E$ is replaced by $\hat{\E}$, 
	from which it follows that 
	\begin{align}
		E_P \left[ e^{ f(B) +A } \right]
		=
		\hat{\E} \left[ e^{f(B)} \right] 
		\qn{for all } P \in \calP .
		\label{A2}
	\end{align}
	Also observe that, since $f$ is bounded and continuous, 
	the mapping $\calP \ni P \mapsto E_P [ e^{f(B)} ]$ 
	is continuous by the definition of weak convergence. 
	Then by the compactness of $\calP$, 
	there exists $P^{\prime} \in \calP$ which attains the supremum 
	of $E_P [e^{f(B)}]$ over $P \in \calP$, namely 
	\begin{align}
		E_{P^{\prime}} \left[ e^{f(B)} \right]
		=
		\hat{\E} \left[ e^{f(B)} \right] .
		\label{A1}
	\end{align}
	Combining \eqref{A2} and \eqref{A1} leads to $P^{\prime} (A=0) =1$
	since $A$ is nonnegative q.s.
\end{proof}

In the case of a classical Brownian motion, the following lemma 
is a consequence of Scheff\'e's lemma, 
the equivalence between $L^1$-convergence 
and the convergence of $L^1$-norms 
for an a.s.\ convergent sequence of random variables.
Although the setting is restricted, 
this lemma may be regarded 
as a sublinear counterpart to Scheff\'e's lemma.

\begin{lem}\label{lem:sche}
	For a bounded $h \in ( \MG )^d$, 
	let $\{ h^n \}_{n \in \N} \subset ( \sbl )^d$ be such that
	$\sup_{0 \le t \le 1} \| | h^n_t | \|_{\infty} \le \sup_{0 \le t \le 1} \| | h_t | \|_{\infty}$
	and
	\begin{align}\label{eq:hn}
		\lim_{n \to \infty} \left\| h - h^n \right\|_{ \MGR } =0.
	\end{align}
	Then we have
	\begin{align}\label{eq:u20}
		\lim_{n \to \infty}
		\E
		\left[
		\left|
		D^{(h)}_1 - D^{(h^n)}_1
		\right|
		\right]
		= 0 .
	\end{align}
	In particular, it holds that for any bounded elements $f$ of $\calL^1_G ( \Omega )$,
	\begin{align}
		&\E^h
		\left[
		f(B)
		- \frac{1}{2} \int_0^1 h_s \cdot
		\left(
		d \qv{B}_s \, h_s
		\right)
		\right] \\
		&=
		\lim_{n \to \infty} \E^{h^n}
		\left[
		f(B)
		- \frac{1}{2} \int_0^1 h^n_s \cdot
		\left(
		d \qv{B}_s \, h^n_s
		\right)
		\right] .
		\label{sch}
	\end{align}
\end{lem}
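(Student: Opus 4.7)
The plan is to prove (1) first, and then derive (2) from it. For (1), I would apply the pointwise inequality $|e^{a}-e^{b}|\le (e^{a}+e^{b})|a-b|$ to the exponents of $D^{(h)}_{1}$ and $D^{(h^{n})}_{1}$, obtaining
\begin{align}
\left|D^{(h)}_{1}-D^{(h^{n})}_{1}\right| \le \bigl(D^{(h)}_{1}+D^{(h^{n})}_{1}\bigr)\,|Y_{n}|\qn{q.s.,}
\end{align}
where
\begin{align}
Y_{n}:=\int_{0}^{1}(h_{s}-h^{n}_{s})\cdot dB_{s}-\tfrac{1}{2}\int_{0}^{1}\bigl(h_{s}\cdot(d\qv{B}_{s}\,h_{s})-h^{n}_{s}\cdot(d\qv{B}_{s}\,h^{n}_{s})\bigr).
\end{align}
H\"older's inequality for $G$-expectation then yields
\begin{align}
\E\left[\left|D^{(h)}_{1}-D^{(h^{n})}_{1}\right|\right] \le \E\left[\bigl(D^{(h)}_{1}+D^{(h^{n})}_{1}\bigr)^{2}\right]^{1/2}\E\left[|Y_{n}|^{2}\right]^{1/2}.
\end{align}

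The first factor is bounded uniformly in $n$ by repeating the calculation \eqref{eq:u11} from the proof of \lref{l:sym}(i), since by hypothesis $\{h^{n}\}$ shares the same q.s.\ $L^{\infty}$-bound as $h$, so each $D^{(h^{n})}_{1}$ has moments of every order uniformly bounded in $n$. For the second factor, I would split $Y_{n}$ into its stochastic integral part and its quadratic variation part. The former satisfies $\E[|\int_{0}^{1}(h_{s}-h^{n}_{s})\cdot dB_{s}|^{2}]\le\sigma_{1}^{2}\|h-h^{n}\|_{\MGR}^{2}$ by the $G$-It\^o isometry; the latter, rewritten via $h\cdot(d\qv{B}h)-h^{n}\cdot(d\qv{B}h^{n})=(h+h^{n})\cdot(d\qv{B}(h-h^{n}))$, admits the q.s.\ pointwise estimate coming from $|d\qv{B}_{s}/ds|\le\sigma_{1}^{2}$ which, combined with Cauchy-Schwarz in time, gives a control of the form $C\|h-h^{n}\|_{\MGR}^{2}$ with $C$ depending only on $\sup_{t}\||h_{t}|\|_{\infty}$. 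Both pieces vanish as $n\to\infty$ by hypothesis, establishing (1).

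For (2), I set $X_{h}:=f(B)-\tfrac{1}{2}\int_{0}^{1}h_{s}\cdot(d\qv{B}_{s}\,h_{s})$ and $X_{h^{n}}$ analogously, and apply the subadditivity of $\E$ twice to obtain
\begin{align}
\left|\E^{h}[X_{h}]-\E^{h^{n}}[X_{h^{n}}]\right| \le \E\left[\left|X_{h}D^{(h)}_{1}-X_{h^{n}}D^{(h^{n})}_{1}\right|\right].
\end{align}
Decomposing the integrand as $X_{h}(D^{(h)}_{1}-D^{(h^{n})}_{1})+(X_{h}-X_{h^{n}})D^{(h^{n})}_{1}$, the q.s.\ bound $|X_{h}|\le \|f\|_{\infty}+\tfrac{\sigma_{1}^{2}}{2}\sup_{t}\||h_{t}|\|_{\infty}^{2}$ together with (1) handles the first piece. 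The second piece is dominated by $\E[|X_{h}-X_{h^{n}}|^{2}]^{1/2}\E[(D^{(h^{n})}_{1})^{2}]^{1/2}$ via Cauchy-Schwarz; the former factor tends to $0$ by the $\calL^{2}_{G}$-convergence of the quadratic variation integrals already established in the proof of (1), and the latter is uniformly bounded in $n$ by a \lref{l:sym}-type estimate.

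The main delicate point, and the reason the hypothesis is phrased the way it is, lies in the uniform $L^{\infty}$-bound $\sup_{n,t}\||h^{n}_{t}|\|_{\infty}\le\sup_{t}\||h_{t}|\|_{\infty}$: without it, the higher moments of $D^{(h^{n})}_{1}$ could blow up in $n$ and neither H\"older estimate above would close. The convergence $\|h-h^{n}\|_{\MGR}\to 0$ alone would be insufficient, which is why the lemma carries this explicit boundedness condition in addition to the $M^{2}_{G}$-approximation.
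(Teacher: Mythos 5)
Your argument is correct, but for the convergence \eqref{eq:u20} it takes a genuinely different route from the paper. The paper exploits the normalization $E_{P_{\theta}}[D^{(h)}_1]=E_{P_{\theta}}[D^{(h^n)}_1]=1$ to reduce the $L^1$-distance to $2\,\E[(D^{(h)}_1-D^{(h^n)}_1)^+]$ (this is the ``sublinear Scheff\'e'' step that motivates the lemma's name), and then uses the one-sided bound $(1-e^{x})^+\le |x|$, so that only the second moment of $D^{(h)}_1$ enters the final Cauchy--Schwarz estimate; the moments of $D^{(h^n)}_1$ are never needed. You instead use the two-sided elementary inequality $|e^{a}-e^{b}|\le (e^{a}+e^{b})|a-b|$, which forces you to control $\E[(D^{(h^n)}_1)^2]$ uniformly in $n$ via the calculation \eqref{eq:u11} and the hypothesis $\sup_t\||h^n_t|\|_{\infty}\le\sup_t\||h_t|\|_{\infty}$. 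In the end both proofs invoke that hypothesis anyway --- the paper needs it to bound $\int_0^1|h_s+h^n_s|^2\,ds$ inside the estimate of $\E[|X_n|^2]$, where your $Y_n$ is exactly the paper's $-X_n$ and the quadratic estimates coincide --- so in this setting neither approach is strictly more economical; yours is more elementary and avoids the martingale identity, while the paper's makes transparent why the statement is a Scheff\'e-type result and would survive without uniform moment bounds on the approximating densities. Your derivation of \eqref{sch} from \eqref{eq:u20}, via the decomposition $X_h(D^{(h)}_1-D^{(h^n)}_1)+(X_h-X_{h^n})D^{(h^n)}_1$ together with the q.s.\ bound on $X_h$ from \eqref{e:qvint} and a Cauchy--Schwarz estimate for the second term, is sound and in fact supplies details that the paper leaves implicit.
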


\begin{proof}
	Since $D^{(h)}$ and $D^{(h^n)}$ are $P_{\theta}$-martingales 
	for any $\theta \in \cA$,
	their expectations under $P_{\theta}$ are equal to $1$, and hence
	\begin{align}
		E_{P_{\theta}}
		\left[
		D^{(h)}_1
		\right]
		=
		E_{P_{\theta}}
		\left[
		D^{(h^n)}_1
		\right] .
		\label{sch1}
	\end{align}
	The left-hand side and the right-hand side of \eqref{sch1} are equal to
	\begin{align}
		E_{P_{\theta}}
		\left[
		\left(
		D^{(h)}_1 - D^{(h^n)}_1
		\right)^+
		\right]
		+
		E_{P_{\theta}}
		\left[
		D^{(h)}_1 \wedge D^{(h^n)}_1
		\right]
		\label{sch2}
	\end{align}
	and
	\begin{align}
		E_{P_{\theta}}
		\left[
		\left(
		D^{(h^n)}_1 - D^{(h)}_1
		\right)^+
		\right]
		+
		E_{P_{\theta}}
		\left[
		D^{(h^n)}_1 \wedge D^{(h)}_1
		\right] ,
		\label{sch3}
	\end{align}
	respectively.
	Here $x^+ = x \vee 0$ for $x \in \R$.
	Combining these with \eqref{sch1}, we have the relation
	$E_{P_{\theta}} [( D^{(h)}_1 - D^{(h^n)}_1 )^+]
	= E_{P_{\theta}} [( D^{(h^n)}_1 - D^{(h)}_1 )^+]$, and hence
	\begin{align}
		E_{P_{\theta}}
		\left[
		\left|
		D^{(h)}_1 - D^{(h^n)}_1
		\right|
		\right]
		=
		2 E_{P_{\theta}}
		\left[
		\left(
		D^{(h)}_1 - D^{(h^n)}_1
		\right)^+
		\right] .
	\end{align}
	As $\theta \in \cA$ is arbitrary, it follows that
	\begin{align}
		\E
		\left[
		\left|
		D^{(h)}_1 - D^{(h^n)}_1
		\right|
		\right]
		=
		2 \E
		\left[
		\left(
		D^{(h)}_1 - D^{(h^n)}_1
		\right)^+
		\right] .	
		\label{sch4}
	\end{align}
	By letting $X_n := \log D^{(h^n)}_1 - \log D^{(h)}_1$,
	the right-hand side of \eqref{sch4} is rewritten as
	\begin{align}
		2 \E
		\left[
		\left(
		1- e^{X_n}
		\right)^+
		D^{(h)}_1
		\right],
	\end{align}
	which is bounded from above by
	\begin{align}
		2 \E
		\left[
		\left|
		X_n
		\right|
		D^{(h)}_1
		\right]
		\le
		2 \E
		\left[
		\left|
		X_n
		\right|^2
		\right]^{1/2}
		\E
		\left[
		( D^{(h)}_1 )^2
		\right]^{1/2} .
	\end{align}
	To obtain the lemma, 
	it is  enough to show that $\E [ | X_n |^2 ]$ tends to $0$ as $n \to \infty$ 
	since $\E [ ( D^{(h)}_1 )^2 ]$ is finite by \eqref{eq:u11}.
	Note that
	\begin{align}
		X_n
		=
		\int_0^1
		\left(
		h^n_s - h_s
		\right)
		\cdot d B_s
		- \frac{1}{2} \int_0^1
		\left(
		h^n_s + h_s
		\right)
		\cdot
		\left(
		d \qv{B}_s
		\left(
		h^n_s - h_s
		\right)
		\right) .
	\end{align}
	Since it holds that
	\begin{align}
		\E
		\left[
		\left|
		\int_0^1
		\left(
		h^n_s - h_s
		\right)
		\cdot d B_s
		\right|^2
		\right]
		\le
		\sigma^2_1
		\left\|
		h^n-h
		\right\|^2_{ \MGR }
	\end{align}
	and
	\begin{align}
		&\E
		\left[
		\left|
		\int_0^1
		\left(
		h^n_s + h_s
		\right)
		\cdot
		\left(
		d \qv{B}_s
		\left(
		h^n_s - h_s
		\right)
		\right)
		\right|^2
		\right] \\
		&\le
		\sigma^4_1 \E
		\left[
		\int_0^1
		\left|
		h^n_s + h_s
		\right|^2
		d s \int_0^1
		\left|
		h^n_s - h_s 
		\right|^2
		d s
		\right] \\
		&\le
		4 \sigma^4_1 \sup_{0 \le t \le 1}
		\left\|
		| h_t |
		\right\|^2_{\infty}
		\left\|
		h^n - h
		\right\|^2_{ \MGR } ,
	\end{align}
	we get $\lim_{n \to \infty} \E [| X_n |^2] =0$, and complete the proof.
\end{proof}

Let $f \in \calL^1_G ( \Omega )$ be bounded and 
$\{ f_n \}_{ n \in \N } \subset \bLip ( \Omega )$ such that
\begin{align}
	\| f_n \|_{\infty} \le \| f\|_{\infty}
	\n{ for all } n \in \N
	\qn{and} \qd 
	\lim_{n \to \infty} 
	\E [| f_n (B) - f(B) |] =0 .
	\label{setfn}
\end{align}
For each $f_n$, let $h^n \in ( \MG )^d$ and $A^n \in \calL^2_G ( \Omega )$ 
be as given by \lref{lem:u2} and $P^n \in \calP$ as given by \lref{urem}. 

\begin{lem}\label{uu}
	For each $n \in \N$, we have
	\begin{align}
		\log \E \left[ e^{f_n (B)} \right]
		&= 
		E_{Q^n} 
		\left[
		f_n (B) - \frac{1}{2} \int_0^1 h^n_s \cdot \left( d \qv{B}_s \, h^n_s \right)
		\right] ,
		\label{u}
	\end{align}
	where $Q^n := D^{(h^n)}_1 P^n$.
\end{lem}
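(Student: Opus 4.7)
The starting point is the identity provided by \lref{lem:u2} applied to $f_n \in \bLip ( \Omega )$,
\begin{align}
	f_n (B) = \log \E \left[ e^{f_n (B)} \right] + \int_0^1 h^n_s \cdot d B_s - \frac{1}{2} \int_0^1 h^n_s \cdot \left( d \qv{B}_s \, h^n_s \right) - A^n ,
\end{align}
which holds in $\calL^2_G ( \Omega )$. I would begin by verifying that $Q^n = D^{(h^n)}_1 P^n$ is a genuine probability measure. Since $h^n$ is bounded, $D^{(h^n)}$ is a symmetric $G$-martingale by \pref{;nov}, so $\E [ D^{(h^n)}_1 ] = 1$ and $\E [ - D^{(h^n)}_1 ] = -1$; the upper expectation representation \eqref{;char3} then forces $\sup_{P \in \calP} E_P [ \pm D^{(h^n)}_1 ] = \pm 1$, and hence $E_P [ D^{(h^n)}_1 ] = 1$ for \emph{every} $P \in \calP$, in particular for $P^n$.

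Next I integrate the displayed identity against $Q^n$. All four terms on the right-hand side lie in $\calL^2_G ( \Omega )$, so by \lref{l:sym}(i) each of them multiplied by $D^{(h^n)}_1$ is in $\calL^1_G ( \Omega )$, ensuring $Q^n$-integrability. Moreover, $A^n = 0$ $P^n$-a.s.\ by \lref{urem}, and $Q^n \ll P^n$, so $E_{Q^n} [ A^n ] = 0$. This yields
\begin{align}
	E_{Q^n} \left[ f_n (B) \right]
	=
	\log \E \left[ e^{f_n (B)} \right]
	+ E_{Q^n} \left[ \int_0^1 h^n_s \cdot d B_s \right]
	- \frac{1}{2} E_{Q^n} \left[ \int_0^1 h^n_s \cdot \left( d \qv{B}_s \, h^n_s \right) \right] .
\end{align}

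It remains to deal with the $dB_s$-integral under $Q^n$. Writing $\ol{B} = B - \int_0^{\cdot} d \qv{B}_s \, h^n_s$, I decompose
\begin{align}
	\int_0^1 h^n_s \cdot d B_s
	=
	\int_0^1 h^n_s \cdot d \ol{B}_s
	+ \int_0^1 h^n_s \cdot \left( d \qv{B}_s \, h^n_s \right) ,
\end{align}
and observe that \lref{l:sym}(ii) gives $\E^{h^n} [ \pm \int_0^1 h^n_s \cdot d \ol{B}_s ] = 0$; appealing once again to the upper expectation representation \eqref{;char3} (now applied to $( \int_0^1 h^n_s \cdot d \ol{B}_s ) D^{(h^n)}_1 \in \calL^1_G ( \Omega )$) yields $E_P [ ( \int_0^1 h^n_s \cdot d \ol{B}_s ) D^{(h^n)}_1 ] = 0$ for every $P \in \calP$, so the $d \ol{B}_s$-term vanishes under $Q^n$. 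Plugging this back and combining the two copies of the quadratic-variation term produces exactly \eqref{u}. The main subtlety is the passage from the sublinear identity $\E^{h^n} [ \pm Y ] = 0$ to the individual linear identity $E_{P^n} [ Y D^{(h^n)}_1 ] = 0$, since $P^n$ is only known to lie in the weak closure $\calP$ and need not be of the form $P_{\theta}$; this is handled uniformly by the sandwich argument $\sup E_P = 0 = -\sup E_P [- \cdot ]$ afforded by the upper expectation representation.
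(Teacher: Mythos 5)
Your proposal is correct and follows essentially the same route as the paper: both rest on the identity of \lref{lem:u2}, the vanishing of the $d\ol{B}_s$-integral obtained by upgrading the symmetric identity of \lref{l:sym}(ii) through the upper expectation representation \eqref{;char3} to every $P \in \calP$, and the fact $P^n ( A^n = 0 ) = 1$ from \lref{urem}. The only (harmless) difference is that you explicitly verify $E_{P^n} [ D^{(h^n)}_1 ] = 1$ by the same sandwich argument, a point the paper leaves implicit.
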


\begin{proof}
	Setting $\ol{B} := T^{-h^n} (B)$, we have by \lref{lem:u2}, 
	\begin{align}
		\log \E \left[ e^{f_n (B) } \right] 
		=
		f_n (B)
		-
		\int_0^1 h^n_s \cdot d \ol{B}_s
		- 
		\frac{1}{2} \int_0^1 h^n_s \cdot \left( d \qv{B}_s \, h^n_s \right) 
		+ A^n .
	\end{align}
	Since $h^n$ is bounded, we have by (ii) of \lref{l:sym}, 
	\begin{align}
		\E 
		\left[
		\left(
		\int_0^1 h^n_s \cdot d \ol{B}_s
		\right)
		D^{(h^n)}_1
		\right]
		=
		- \E 
		\left[
		-
		\left(
		\int_0^1 h^n_s \cdot d \ol{B}_s
		\right)
		D^{(h^n)}_1
		\right]
		= 0.
	\end{align}
	By \eqref{;char3}, this relation holds with $\E$ replaced by $\hat{\E}$, 
	which results in 
	\begin{align}
		E_P 
		\left[
		\left(
		\int_0^1 h^n_s \cdot d \ol{B}_s
		\right)
		D^{(h^n)}_1
		\right]
		= 0
		\qn{for all } P \in \calP .
	\end{align}
	Combining this with $P^n ( A^n =0 ) =1$ leads to \eqref{u}. 
\end{proof}

Now we are in a position to prove the upper bound in \tref{;tmain}.

\begin{prop}\label{prop:u}
	\eqref{Ubound} holds for any bounded elements $f$ of $\calL^1_G ( \Omega )$.
\end{prop}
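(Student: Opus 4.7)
My plan for \pref{prop:u} is to combine the Clark--Ocone type identity of \lref{uu} with a classical Girsanov change of variable and the upper expectation representation \eqref{;char3}, then to pass to the limit in the bounded-Lipschitz approximation $\{ f_n \}_{n \in \N}$ fixed in \eqref{setfn}.

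First, for each $f_n$, \lref{uu} gives $\log \E [ e^{ f_n (B) } ] = E_{Q^n} [ f_n (B) - \tfrac{1}{2} \int_0^1 h^n_s \cdot (d \qv{B}_s\, h^n_s) ]$, where $Q^n = D^{(h^n)}_1 P^n$ with $P^n \in \calP$ and bounded $h^n \in ( \MG )^d$. Since $B$ is a continuous martingale under $P^n$ and $h^n$ is bounded, the classical Girsanov theorem applied on $( \Omega , \calB ( \Omega ) , P^n )$ shows that $\ol{B}^n := B - \int_0^{\cdot} d \qv{B}_s\, h^n_s$ is a continuous martingale under $Q^n$ whose law coincides with the law of $B$ under $P^n$. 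Using $\qv{B} = \qv{ \ol{B}^n }$, the reconstruction $B = \ol{B}^n + \int_0^\cdot d \qv{ \ol{B}^n }_s\, h^n_s (B)$ is a Volterra-type fixed-point equation in $B$; solving it pathwise produces a functional $\tilde{h}^n$ on $\Omega$ with $h^n (B) = \tilde{h}^n ( \ol{B}^n )$ and $B = T^{ \tilde{h}^n } ( \ol{B}^n )$, and $\tilde{h}^n \in ( \MG )^d$ follows from the boundedness and adaptedness of $h^n$. Substituting and using the identity in law of $\ol{B}^n$ under $Q^n$, the previous display equals $E_{P^n} [ \Phi^n ]$ with $\Phi^n := f_n ( T^{ \tilde{h}^n } (B) ) - \tfrac{1}{2} \int_0^1 \tilde{h}^n_s \cdot (d \qv{B}_s\, \tilde{h}^n_s)$.

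By \pref{;comp}, $\Phi^n \in \calL^1_G ( \Omega )$, so $P^n \in \calP$ and \eqref{;char3} yield $E_{P^n} [ \Phi^n ] \le \hat{\E} [ \Phi^n ] = \E [ \Phi^n ] \le \sup_{ h \in ( \MG )^d } \E [ f_n ( T^h (B) ) - \tfrac{1}{2} \int_0^1 h_s \cdot (d \qv{B}_s\, h_s) ]$, which is \eqref{Ubound} for $f_n$. To pass to the limit $n \to \infty$, the left-hand side converges to $\log \E [ e^{ f (B) } ]$ via the Lipschitz estimate in \rref{rem:Lip}, and for the right-hand side sublinearity of $\E$ together with the Chebyshev-plus-absolute-continuity argument used in the proof of \pref{;comp} (invoking \pref{;abs}) shows that $\sup_h \E [ | f_n ( T^h (B) ) - f ( T^h (B) ) | ] \to 0$, so the limit of the right-hand side is bounded by $\sup_h \E [ f ( T^h (B) ) - \tfrac{1}{2} \int_0^1 h_s \cdot (d \qv{B}_s\, h_s) ]$.

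I expect the main obstacle to be the Girsanov identification: showing that $h^n (B)$ under $Q^n$ can be written as $\tilde{h}^n ( \ol{B}^n )$ for some $\tilde{h}^n \in ( \MG )^d$. This amounts to solving the Volterra-type equation $B = \ol{B}^n + \int_0^\cdot d \qv{ \ol{B}^n }_s\, h^n_s (B)$ pathwise and then verifying the measurability and integrability of the resulting $\tilde{h}^n$. A secondary technical point is the uniform-in-$h$ control needed in the final limit, for which a quantitative form of the absolute continuity \pref{;abs} is essential.
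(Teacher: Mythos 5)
Your starting point (\lref{uu}) and the idea of inverting the drift via a Girsanov transformation are in the same spirit as the paper's argument, but two steps in your outline have genuine gaps. First, the inversion: you need the pathwise solution $\tilde h^n$ of $B=\ol B{}^n+\int_0^{\cdot}d\qv{\ol B{}^n}_s\,\tilde h^n_s(\ol B{}^n)$ to lie in $(\MG)^d$, and ``boundedness and adaptedness of $h^n$'' does not give this: membership in $\MG$ is a completion-of-simple-processes condition, not a measurability condition, and for a general bounded $h^n$ there is no reason the fixed-point solution is approximable by elements of $M^{2,0}_G(0,1)$. The paper circumvents exactly this point. It first bounds $E_{Q^n}[\cdots]\le\hat{\E}[(\cdots)D^{(h^n)}_1]=\E^{h^n}[\cdots]$, then uses the Scheff\'e-type \lref{lem:sche} to replace $h^n$ by a simple $h\in(\sbl)^d$ at the cost of an $\ve$, and carries out the inversion only for simple processes, where the inverted process $\hat h$ is built explicitly interval by interval and is manifestly a bounded element of $(M^{2,0}_G(0,1))^d$; the transfer to $\E[f(T^{\hat h}(B))-\cdots]$ is then \tref{;gir}, not the classical Girsanov theorem under $P^n$ (whose applicability to a weak limit $P^n\in\calP$, for which the martingale property of $B$ is not recorded in the paper, you would also have to justify).

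Second, your limiting step needs $\sup_{h\in(\MG)^d}\E\left[\left|f_n(T^h(B))-f(T^h(B))\right|\right]\to 0$, i.e.\ absolute continuity \emph{uniformly} over all $h$. \pref{;abs} does not provide this: the constant $C_h$ appearing in its proof grows with $\|h\|_{\MGR}$, and the supremum in \eqref{Ubound} runs over unbounded families of $h$. The paper's proof is arranged precisely to avoid this issue: it never proves \eqref{Ubound} for $f_n$ and then passes to the limit. Instead it decomposes $\log\E[e^{f(B)}]$ so that the variational term carries $f$ itself, namely $E_{Q^n}\left[f(B)-\frac12\int_0^1 h^n_s\cdot\left(d\qv{B}_s\,h^n_s\right)\right]$, and the only error term involving $f_n-f$ is $E_{Q^n}[f_n(B)-f(B)]$, taken under the single measure $Q^n$ and controlled by the explicit density bound $D^{(h^n)}_1=e^{f_n(B)}/\E[e^{f_n(B)}]\le e^{2\|f\|_\infty}$ $P^n$-a.s.\ supplied by Lemmas~\ref{lem:u2} and \ref{urem}. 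To repair your argument you would need either to adopt this decomposition or to prove a uniform-in-$h$ strengthening of \pref{;abs}, which is not available.
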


\begin{proof}
	For a bounded $f \in \calL^1_G ( \Omega )$, 
	let $\{ f_n \}_{n \in \N} \subset \bLip ( \Omega )$ satisfy \eqref{setfn}, 
	and let $h^n$, $A^n$ and $P^n$ be as above. 
	Then for each $n \in \N$, we have by \lref{uu},
	\begin{align}
		\log \E \left[ e^{f(B)} \right] 
		&=
		\log \E \left[ e^{f(B)} \right]
		-
		\log \E \left[ e^{ f_n (B) } \right]
		+
		E_{Q^n} [ f_n (B) - f(B) ] \\
		&\qd
		+ E_{Q^n} 
		\left[
		f (B) - \frac{1}{2} \int_0^1 h^n_s \cdot \left( d \qv{B}_s \, h^n_s \right)
		\right] . 
		\label{eq:fn}
	\end{align}
	
	For the difference of the first two terms in the right-hand side of \eqref{eq:fn},
	as seen in \eqref{expLip}, we have
	\begin{align}
		\left|
		\log \E \left[ e^{f(B)} \right] 
		- 
		\log \E \left[ e^{f_n (B)} \right]
		\right|
		\to 0 
		\qn{as } n \to \infty . 
	\end{align}
	As to the third term in the right-hand side of \eqref{eq:fn}, 
	we see from Lemmas~\ref{lem:u2} and \ref{urem} that
	\begin{align}
		D^{(h^n)}_1
		=
		\frac{ e^{ f_n (B) } }{ \E \left[ e^{f_n (B) } \right] }
		\le 
		e^{2 \| f \|_{\infty} } 
		\qd P^n \n{-a.s.},
	\end{align}
	and hence 
	\begin{align}
		E_{Q^n} [ f_n (B) - f(B) ]
		&\le 
		e^{2 \| f \|_{\infty}}
		E_{P^n} [| f_n (B) - f(B) |] \\
		&\le
		e^{2 \| f \|_{\infty}}
		\hat{\E} [| f_n (B) - f(B) |] ,
	\end{align}
	which converges to $0$ as $n \to \infty$ 
	by \eqref{;char3} and \eqref{setfn}.
	
	Therefore it remains to estimate the last term 
	in the right-hand side of \eqref{eq:fn}.
	For this purpose, we first observe the bound 
	\begin{align}
		E_{Q^n} 
		\left[
		f(B) 
		- 
		\frac{1}{2} \int _0^1 
		h^n_s \cdot ( d \qv{B}_s \, h^n_s )
		\right]
		&\le 
		\hat{\E}
		\left[
		\left(
		f(B) 
		- 
		\frac{1}{2} \int _0^1 
		h^n_s \cdot ( d \qv{B}_s \, h^n_s )
		\right)
		D^{(h^n)}_1
		\right] \\
		&=
		\E ^{h^n} 
		\left[
		f(B) 
		- 
		\frac{1}{2} \int _0^1 
		h^n_s \cdot ( d \qv{B}_s \, h^n_s )
		\right] ,
		\label{EQn}
	\end{align}
	where the equality follows from \eqref{;char3}.
	Fix $\ve > 0$ arbitrarily. 
	From \eqref{sch} in \lref{lem:sche}, we see that,
	for every $n \in \N$, there exists an $h \equiv h^{(n, \ve)} \in ( \sbl )^d$ 
	such that 
	\begin{align}\label{eq:u10}
		\hspace*{-5mm}
		\E^{h^n} 
		\left[ 
		f(B) - \frac{1}{2} \int_0^1 h^n_s \cdot 
		\left( d \qv{B}_s \, h^n_s \right) 
		\right]
		&\le 
		\E^h
		\left[ 
		f(B) - \frac{1}{2} \int_0^1 h_s \cdot 
		\left( d \qv{B}_s \, h_s \right) 
		\right]
		+ \ve .
	\end{align}
	Let $h \in ( \sbl )^d$ be written as \eqref{sbl} 
	and define a simple process $\hat{h}$ as follows:
	\begin{align}
		&\hat{\xi}_0 := \xi_0 , 
		& &
		\hat{h}_t := \hat{\xi}_0 ,~ 
		t_0 \le t < t_1, \\
		&\hat{\xi}_1 := \xi_1 
		\left( B_t + \int_0^t d \qv{B}_s \, \hat{h}_s ,~ t \le t_1 \right) ,
		& &
		\hat{h}_t := \hat{\xi}_1 ,~ 
		t_1 \le t < t_2, \\
		&\hspace{15em} \vdots & & \\
		&\hat{\xi}_{m-1} := \xi_{m-1} 
		\left( B_t + \int_0^t d \qv{B}_s \, \hat{h}_s ,~ t \le t_{m-1} \right) ,
		& &
		\hat{h}_t := \hat{\xi}_{m-1} ,~ 
		t_{m-1} \le t < t_m .
	\end{align}
	By this construction, it is obvious that $\hat{h}$ is in 
	$( M^{2,0}_G (0,1) )^d$ and bounded. 
	Furthermore, it can be shown inductively that 
	\begin{align}
		h_t
		= \hat{h}_t ( T^{-h} (B) ),
		\qd t_k \le t < t_{k+1}, 
		\label{hath}
	\end{align}
	for all $k=0,1 \ddd m-1$. 
	Put $\ol{B} := T^{-h} (B)$.
	By \tref{;gir}, we have
	\begin{align}
		&\E^h
		\left[ 
		f(B) - \frac{1}{2} \int_0^1 h_s \cdot 
		\left( d \qv{B}_s \, h_s \right) 
		\right] \\
		&= 
		\E^h 
		\left[
		f \left( \ol{B} + \int_0^{\cdot} d \qv{ \ol{B}}_s \, \hat{h}_s (\ol{B}) \right)
		- \frac{1}{2} \int_0^1 \hat{h}_s (\ol{B}) \cdot 
		\left( d \qv{\ol{B}}_s \, \hat{h}_s (\ol{B}) \right) 
		\right] \\
		&= 
		\E 
		\left[
		f \left( B + \int_0^{\cdot} d \qv{B}_s \, \hat{h}_s \right)
		- \frac{1}{2} \int_0^1 \hat{h}_s \cdot 
		\left( d \qv{B}_s \, \hat{h}_s \right) 
		\right] ;
	\end{align}
	for the validity of the second equality, see \rref{rem:gir}.
	Combining this with \eqref{EQn} and \eqref{eq:u10}, we obtain
	\begin{align}
		&E_{Q^n}
		\left[ 
		f(B) - \frac{1}{2} \int_0^1 h^n_s \cdot \left( d \qv{B}_s \, h^n_s \right) 
		\right] \\
		&\le \sup_{h \in ( \MG )^d} \E \left[
		f \left( B + \int_0^{\cdot} d \qv{B}_s \, h_s \right)
		- \frac{1}{2} \int_0^1 h_s \cdot \left( d \qv{B}_s \, h_s \right) \right]
		+ \ve \label{eq:l20}
	\end{align}
	for all $n \in \N$, and complete the proof.
\end{proof}

\section{An application to large deviations}\label{;ld}

In this section, we apply \tref{;tmain} 
to the derivation of the Laplace principles 
for $\{ ( \sqrt{\ve} B , \qv{B} ) ; \ve >0 \}$ 
and $\{ \sqrt{\ve} B ; \ve >0 \}$.
Similarly to the classical case, 
the Laplace principle implies the large deviation principle, 
and hence we recover the large deviation principles for these two families,
which are originally proved by Gao-Jiang \cite{Gao;10} 
through discretization technique.

First we formulate the Laplace principle under $G$-expectation as follows:
Let $\{ X^{\ve} ; \ve > 0 \}$ be a family of random variables 
taking values in a Polish space $\calX$.
We let $I$ be a rate function on $\calX$, that is, 
a mapping from $\calX$ into $[0, \infty ]$ 
such that for each $M>0$ the revel set $\{ x \in \calX : I(x) \le M \}$ 
is a compact subset of $\calX$.
We say that $\{ X^{\ve} ; \ve >0 \}$ satisfies the Laplace principle on $\calX$ 
with rate function $I$ if for all bounded 
and continuous functions $\Phi : \calX \to \R$, it holds that
\begin{align}\label{eq:L}
	\lim_{\ve \to 0} \ve \log \E \left[ \exp
	\left( \frac{ \Phi \left( X^{\ve} \right) }{\ve} \right) \right]
	= \sup_{x \in \calX} \left\{ \Phi (x) - I (x) \right\} .
\end{align}

The following proposition can be proved 
through the same argument as in the classical case 
(see, e.g., the proof of \cite[Theorem~1.2.3]{Dupuis;97}).
We remark that the validity of the converse assertion 
is ensured by Lemma~A.3 in \cite{Gao;10}.

\begin{prop}\label{prop:L}
	The Laplace principle implies the large deviation principle 
	with the same rate function.
	More precisely,
	if the Laplace limit \eqref{eq:L} holds 
	for all bounded continuous functions $\Phi : \calX \to \R$, 
	then $\{ X^{\ve} ; \ve >0 \}$ satisfies 
	the large deviation principle on $\calX$ with rate function $I$.
\end{prop}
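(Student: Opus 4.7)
My plan is to mirror the classical Dupuis--Ellis argument (as in \cite[Theorem~1.2.3]{Dupuis;97}), adapting it to the sublinear/capacity framework. I would deduce the two one-sided bounds of the LDP separately---the upper bound on closed subsets of $\calX$ and the lower bound on open subsets---each obtained by feeding a carefully tailored bounded continuous test function $\Phi$ into the Laplace limit \eqref{eq:L}. The test functions will be built via Urysohn's lemma on the Polish space $\calX$.

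For the upper bound, given a closed $F \subset \calX$, I would fix $M > 0$ and $\delta > 0$, and take $\Phi \in C_b ( \calX )$ with $0 \le \Phi \le M$, $\Phi \equiv M$ on $F$, and $\Phi \equiv 0$ outside the open $\delta$-neighborhood $F^{\delta} := \{ x \in \calX : d(x, F) < \delta \}$. The pointwise inequality
\begin{align}
\exp \left( \Phi ( X^{\ve} ) / \ve \right) \ge e^{M/\ve} \, \one _{ \{ X^{\ve} \in F \} }
\end{align}
and monotonicity give $\E [ \exp ( \Phi (X^{\ve} ) / \ve ) ] \ge e^{M/\ve} c ( X^{\ve} \in F )$; taking $\ve \log$, invoking \eqref{eq:L}, and using the trivial bound $\sup_{x} \{ \Phi - I \} \le M - \inf_{F^{\delta}} I$ yield $\limsup_{\ve \to 0} \ve \log c ( X^{\ve} \in F ) \le -\inf_{F^{\delta}} I$. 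Sending $\delta \downarrow 0$ and using the lower semicontinuity of $I$ (built into the rate-function definition via compactness of its level sets) upgrades the right-hand side to $-\inf_F I$.

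For the lower bound on an open $G \subset \calX$, I would fix $x_0 \in G$ with $I(x_0) < \infty$, choose $\delta > 0$ so that the closed $\delta$-ball around $x_0$ lies in $G$, take $M > I(x_0)$, and build $\Phi \in C_b ( \calX )$ with $-M \le \Phi \le 0$, $\Phi(x_0) = 0$, and $\Phi \equiv -M$ off that ball. The pointwise bound $\exp ( \Phi ( X^{\ve} ) / \ve ) \le e^{-M/\ve} + \one _{ \{ X^{\ve} \in G \} }$ combined with the subadditivity of $\E$ gives $\E [ \exp ( \Phi (X^{\ve}) / \ve ) ] \le e^{-M/\ve} + c ( X^{\ve} \in G )$; pairing this with \eqref{eq:L} and the evaluation $\sup_x \{ \Phi - I \} \ge \Phi(x_0) - I(x_0) = -I(x_0) > -M$ forces $-I(x_0) \le \liminf_{\ve \to 0} \ve \log c ( X^{\ve} \in G )$, and taking the supremum over $x_0 \in G$ completes the lower bound. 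The main technical point, compared with the classical proof, is justifying the monotonicity and subadditivity steps above for the indicators $\one _{ \{ X^{\ve} \in F \} }$ and $\one _{ \{ X^{\ve} \in G \} }$, which \emph{a priori} need not lie in $\calL^1_G ( \Omega )$; I would circumvent this by passing to the upper expectation $\ol{\E}$ (defined for all bounded Borel functionals) via the representation \eqref{;char2} and running the estimates $P_{\theta}$-by-$P_{\theta}$ for $\theta \in \cA$ before taking the supremum---this is the step where the sublinear framework differs substantively from the classical one, though no new conceptual input is needed.
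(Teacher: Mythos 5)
Your argument is the standard Dupuis--Ellis derivation, which is exactly what the paper invokes here (it gives no proof of its own, deferring to \cite[Theorem~1.2.3]{Dupuis;97}), and your device of passing to $\ol{\E}$ via \eqref{;char2} and arguing $P_{\theta}$-by-$P_{\theta}$ is the right way to handle the indicators $\one_{\{X^{\ve}\in F\}}$, $\one_{\{X^{\ve}\in G\}}$ in the capacity setting. One repair is needed in the upper bound: the inequality $\sup_{x}\{\Phi(x)-I(x)\}\le M-\inf_{F^{\delta}}I$ is false when $\inf_{F^{\delta}}I>M$, since a point $x\notin F^{\delta}$ with small $I(x)$ gives $\sup_{x}\{\Phi-I\}\ge -I(x)$, which can exceed $M-\inf_{F^{\delta}}I$; the correct bound is $\max\{M-\inf_{F^{\delta}}I,\,0\}$, so with $M$ fixed you only obtain $\limsup_{\ve\to 0}\ve\log c(X^{\ve}\in F)\le -(M\wedge\inf_{F^{\delta}}I)$, and you must also let $M\to\infty$ to reach $-\inf_{F}I$ (this is essential when $\inf_F I$ is large or infinite). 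The lower-bound half is complete as written.
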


\begin{rem}\label{rem:L}
	As in the classical case, it is enough to check that 
	the Laplace limit \eqref{eq:L} is valid 
	for all bounded Lipschitz continuous functions $\Phi$ 
	in order to see $\{ X^{\ve} ; \ve >0 \}$ satisfies the Laplace principle.
\end{rem}

Denote by $C ( [0,1] ; \R^{d \times d} )$ (resp. by $C ( [0,1] ; \R^d )$) 
the space of all $\R^{d \times d}$-valued (resp. $\R^d$-valued) continuous functions 
on $[0,1]$ vanishing at 0. 
We equip $C ( [0,1] ; \R^{d \times d} )$ with the distance
\begin{align}
	\n{dist} (y^1 , y^2) 
	:= 
	\sup_{0 \le t \le 1} 
	\left\| y^1 (t) - y^2 (t) \right\| ,
	\qd y^1 , y^2 \in C ( [0,1] ; \R^{d \times d} ) ,
\end{align}
where $\| A \| = \sqrt{ \tr [A A^*] }$ for $A \in \R^{d \times d}$, 
and equip $C ( [0,1] ; \R^d )$ with the distance $\rho$ defined by \eqref{eq:rho}.
Set
\begin{align}
	&\bH 
	:= 
	\left\{ 
	x \in C ( [0,1] ; \R^d ) : 
	x \n{ is absolutely continuous and }
	\int_0^1 | \dot{x} \left( t \right) |^2 \, d t
	< \infty 
	\right\} , \\
	&\A 
	:= 
	\left\{
	\begin{aligned}
		y \in C ( [0,1] ; \R^{d \times d} ) :~ 
		&y \n{ is absolutely continuous and } \\
		&\dot{y} (t)
		\in \{ \gamma \gamma^* : \gamma \in \Theta \} 
		\n{ for a.e. } t \in [0,1]
	\end{aligned}
	\right\} .
\end{align}
Here $\dot{x}$ and $\dot{y}$ denote 
the derivatives $dx/dt$ and $dy/dt$, respectively, provided that they exist. 
We define rate functions 
$J : C ( [0,1] ; \R^d ) \times C ( [0,1] ; \R^{d \times d} ) \to [0, \infty ]$ 
and $I : C ( [0,1] ; \R^d ) \to [0, \infty ]$ by
\begin{align}
	&J (x,y) = 
	\left\{
	\begin{aligned}
		&\frac{1}{2} \int_0^1 \dot{x} (t)
		\cdot \left( \dot{y} (t)^{-1} \dot{x} (t) \right) d t
		& &\n{if } (x,y) \in \bH \times \A , \\
		&+ \infty 
		& &\n{otherwise,}
	\end{aligned}
	\right. \\
	&I (x) = 
	\left\{
	\begin{aligned}
		&\frac{1}{2} \int_0^1 \inf_{\gamma \in \Theta}
		| \gamma^{-1} \dot{x} (t) |^2 \, d t
		& &\n{if } x \in \bH , \\
		&+ \infty 
		& &\n{otherwise.}
	\end{aligned}
	\right.
\end{align}
In the following, we abbreviate the notation as $\sup_{x}$ (resp. $\sup_{(x,y)}$) 
when we take the supremum over $x \in C([0,1] ; \R^d)$ 
(resp. $(x,y) \in C ([0,1] ; \R^d) \times C([0,1] ; \R^{d \times d})$). 

\begin{prop}\label{prop:apl}
	$(\mathrm{i})$ 
	For any bounded Lipschitz continuous function $\Psi : C ( [0,1] ; \R^d )$
	$ \times C ( [0,1] ; \R^{d \times d} ) \to \R$,
	\begin{align}\label{eq:BB}
		\begin{aligned}
			\lim_{\ve \to 0} 
			\ve \log \E 
			\left[
			\exp \left( \frac{ \Psi ( \sqrt{\ve} B , \qv{B} ) }{\ve} \right) 
			\right]
			&= \sup_{(x,y)} 
			\left\{ \Psi (x,y) - J (x,y) \right\} .
		\end{aligned}
	\end{align}
	$(\mathrm{ii})$ 
	For any bounded Lipschitz continuous function $\Phi : C ( [0,1] ; \R^d ) \to \R$,
	\begin{align}\label{eq:B}
		\lim_{\ve \to 0} 
		\ve \log \E 
		\left[
		\exp \left( \frac{ \Phi ( \sqrt{\ve} B ) }{\ve} \right) 
		\right]
		&= \sup_{x} 
		\left\{ \Phi (x) - I (x) \right\} .
	\end{align}
\end{prop}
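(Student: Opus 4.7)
The plan is to apply the variational representation \eqref{;eqmain} with the functional $f(B) := \Psi(\sqrt{\ve}B, \qv{B})/\ve$ and then pass to the limit $\ve \to 0$ through the upper expectation representation $\E = \sup_{\theta \in \cA} E_{P_\theta}$. Note that $f$ is a bounded element of $\calL^1_G(\Omega)$, so \tref{;tmain} applies. Rescaling by setting $u := \sqrt{\ve}h$ transforms the identity \eqref{;eqmain} into
\begin{align}
\ve \log \E\!\left[ \exp\!\left( \frac{\Psi(\sqrt{\ve}B, \qv{B})}{\ve}\right) \right]
= \sup_{u \in (\MG)^d} \E\!\left[ \Psi\!\left( \sqrt{\ve}B + \int_0^\cdot d\qv{B}_s \, u_s,\, \qv{B}\right) - \frac{1}{2} \int_0^1 u_s \cdot (d\qv{B}_s \, u_s) \right].
\end{align}
The Lipschitz continuity of $\Psi$, together with $\ol{\E}[\sup_t |\sqrt{\ve}B_t|] = O(\sqrt{\ve})$, reduces the analysis to the expression obtained by dropping the $\sqrt{\ve}B$ term inside $\Psi$, up to a vanishing error.

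Next, for each fixed $u$ and each $\theta \in \cA$, under $P_\theta$ the paths $y_t := \qv{B}_t = \int_0^t \theta_s \theta_s^* \, ds$ and $x^u_t := \int_0^t d\qv{B}_s \, u_s = \int_0^t \theta_s \theta_s^* u_s \, ds$ are absolutely continuous, $y \in \A$ by \eqref{;qv}, and $\dot{x}^u(s) = \dot{y}(s) u_s$. Because $\sigma_0 > 0$, $\dot{y}(s)$ is invertible and $u_s = \dot{y}(s)^{-1}\dot{x}^u(s)$, so a direct computation gives
\begin{align}
\int_0^1 u_s \cdot (d\qv{B}_s \, u_s) = \int_0^1 \dot{x}^u(s) \cdot \dot{y}(s)^{-1}\dot{x}^u(s) \, ds = 2J(x^u, y) \qn{under } P_\theta.
\end{align}
The $\limsup \le$ direction then follows by bounding $E_{P_\theta}[\Psi(x^u,y) - J(x^u,y)] \le \sup_{(x,y)}\{\Psi(x,y) - J(x,y)\}$ and supremizing over $u$, $\theta$ and the $\sqrt{\ve}B$-error.

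For the $\liminf \ge$ direction, given $(x_0, y_0) \in \bH \times \A$ with $J(x_0, y_0) < \infty$, I would use a measurable selection argument (which works because $\Theta$ is closed and bounded) to obtain $\theta_0 \in \cA$ with $\theta_0(s)\theta_0(s)^* = \dot{y}_0(s)$ a.e., and set $u^0_s := \dot{y}_0(s)^{-1}\dot{x}_0(s) \in L^2([0,1];\R^d)$. After approximating $u^0$ by simple processes in $(\sbl)^d$ so that the variational formula can be evaluated at an admissible control, computing the expectation under $P_{\theta_0}$ yields $\Psi(x_0, y_0) - J(x_0, y_0) + o(1)$ as $\ve \to 0$. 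Taking the supremum over $(x_0, y_0)$ completes part (i). Part (ii) then follows by applying (i) to $\Psi(x,y) := \Phi(x)$: the outer supremum becomes $\sup_x\{\Phi(x) - \inf_{y \in \A} J(x,y)\}$, and a pointwise measurable selection of a minimizer $\gamma_*(t) \in \Theta$ of $\gamma \mapsto |\gamma^{-1}\dot{x}(t)|^2$ shows $\inf_{y \in \A} J(x,y) = I(x)$.

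The main obstacles I expect are (a) verifying that the near-optimal controls $u$ used in the lower bound can be chosen in (or approximated in $\| \cdot \|_{\MGR}$ by elements of) the admissible class $(\MG)^d$, which I intend to handle via density of $(\sbl)^d$; (b) carrying out the measurable selections of $\theta_0$ and $\gamma_*$ from the closed bounded set $\Theta$; and (c) arranging the interchange of the supremum over $u$, the upper expectation, and the $\ve \to 0$ limit so that the Lipschitz perturbation $\sqrt{\ve}B$ is controlled uniformly in $u$ and $\theta$.
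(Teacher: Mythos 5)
Your plan is correct and follows essentially the same route as the paper: apply Theorem~\ref{;tmain} to $f=\Psi(\sqrt{\ve}B,\qv{B})/\ve$, rescale $h\mapsto\sqrt{\ve}h$, absorb the $\sqrt{\ve}B$ perturbation uniformly via the Lipschitz bound $\lip(\Psi)\,\ol{\E}[\sup_t|B_t|]\sqrt{\ve}$, and then match the $\ve$-free variational problem with $\sup_{(x,y)}\{\Psi-J\}$ by identifying paths under each $P_\theta$ for the upper bound and by a deterministic measurable selection $g:[0,1]\to\Theta$ with control $\eta=\dot y^{-1}\dot x$ (already in $(\MG)^d$, so no extra simple-process approximation is needed) for the lower bound. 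The only step you gloss over is the verification that $\Psi(\sqrt{\ve}B,\qv{B})$ actually lies in $\calL^1_G(\Omega)$, which the paper handles by a polygonal approximation of $\qv{B}$ together with $\|\qv{B}_t-\qv{B}_s\|\le d\sigma_1^2|t-s|$ q.s.
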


From \pref{prop:L} and \rref{rem:L}, we see that \pref{prop:apl} implies that 
the family $\{ ( \sqrt{\ve} B , \qv{B} ) ; \ve >0 \}$ 
(resp.\ $\{ \sqrt{\ve} B ; \ve >0 \}$) satisfies the large deviation principle on 
$C ( [0,1] ; \R^d ) \times C ( [0,1] ; \R^{d \times d} )$ (resp.\ $C ( [0,1] ; \R^d )$) 
with rate function $J$ (resp.\ $I$).

We begin with a lemma which is an application of \tref{;tmain}.

\begin{lem}\label{lem:apl}
	For every $\ve>0$, we have the following.
	
	$(\mathrm{i})$ 
	For any bounded Lipschitz continuous function 
	$\Psi : C ( [0,1] ; \R^d ) \times C ( [0,1] ; \R^{d \times d} )$
	$\to \R$,
	\begin{align}
		&\ve \log \E 
		\left[ 
		\exp \left( \frac{\Psi ( \sqrt{\ve} B , \qv{B} )}{ \ve } \right) 
		\right] \\
		&= \sup_{h \in ( \MG )^d}
		\E 
		\left[ 
		\Psi \left( \sqrt{\ve} B + \int_0^{\cdot} d \qv{B}_s \, h_s , \qv{B} \right)
		- \frac{1}{2} \int_0^1 h_s \cdot \left( d \qv{B}_s \, h_s \right)
		\right] .
		\label{eq:a1}
	\end{align}
	
	$(\mathrm{ii})$ 
	For any bounded and Lipschitz continuous function $\Phi : C ( [0,1] ; \R^d ) \to \R$,
	\begin{align}\label{eq:a2}
		&\ve \log \E 
		\left[ 
		\exp \left( \frac{\Phi ( \sqrt{\ve} B )}{ \ve } \right) 
		\right] \\
		&= \sup_{h \in ( \MG )^d}
		\E 
		\left[ 
		\Phi \left( \sqrt{\ve} B + \int_0^{\cdot} d \qv{B}_s \, h_s \right)
		- \frac{1}{2} \int_0^1 h_s \cdot \left( d \qv{B}_s \, h_s \right)
		\right] .
	\end{align}
\end{lem}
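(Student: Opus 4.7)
The plan is to reduce both parts of the lemma to a single application of \tref{;tmain} combined with a rescaling of the admissible control. We focus on part (i); part (ii) is the special case in which $\Psi$ does not depend on its second argument, and the argument below goes through verbatim with the $\qv{B}$-invariance issue becoming vacuous.

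Fix $\ve >0$ and set $f := \ve^{-1} \Psi ( \sqrt{\ve} B , \qv{B} )$, viewed as a functional on $\Omega$. Boundedness of $f$ is clear. To see $f \in \calL^1_G ( \Omega )$, we use that each $B^i_t \in \calL^1_G ( \Omega_t )$ and that $\qv{B^i , B^j}_t \in \calL^1_G ( \Omega_t )$ (the latter being realized as a quasi-sure limit of Riemann sums in $\bLip ( \Omega_t )$). Combining this with the Lipschitz continuity of $\Psi$ and a piecewise-linear time-interpolation of $( \sqrt{\ve} B , \qv{B} )$ produces cylinder-type approximants in $\bLip ( \Omega )$ converging to $f$ in the $\E [ | \cdot | ]$-norm.

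Applying \tref{;tmain} to $f$ and multiplying both sides by $\ve$ yields
\begin{align}
\ve \log \E \left[ e^{ \Psi ( \sqrt{\ve} B , \qv{B} ) / \ve } \right]
=
\sup_{h' \in ( \MG )^d}
\E \left[ \Psi \left( \sqrt{\ve}\, T^{h'} (B) , \qv{T^{h'}(B)} \right) - \frac{\ve}{2} \int_0^1 h'_s \cdot ( d \qv{B}_s \, h'_s ) \right] .
\end{align}
The key observation is that the quadratic variation is invariant under the shift:
\begin{align}
\qv{T^{h'}(B)} = \qv{B} \qquad \text{q.s.}
\end{align}
Under each $P_{\theta}$ with $\theta \in \cA$, the canonical process $B$ is a continuous martingale and $\int_0^{\cdot} d \qv{B}_s \, h'_s$ has finite variation $P_{\theta}$-a.s., so the identity holds $P_{\theta}$-a.s.; the upper expectation representation \eqref{;char2} then lifts it to a q.s.\ statement. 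Finally, the bijection $h' \mapsto h := \sqrt{\ve}\, h'$ on $( \MG )^d$ transforms $\sqrt{\ve} \int_0^{\cdot} d \qv{B}_s \, h'_s$ into $\int_0^{\cdot} d \qv{B}_s \, h_s$ and $\ve \, h'_s \cdot ( d \qv{B}_s \, h'_s )$ into $h_s \cdot ( d \qv{B}_s \, h_s )$, producing \eqref{eq:a1} exactly.

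The main obstacle we anticipate is the verification of $f \in \calL^1_G ( \Omega )$, since $\Psi$ depends on $\qv{B}$ and one must simultaneously approximate $B$ and $\qv{B}$ by functionals in $\bLip ( \Omega )$ in a way compatible with the $\E [ | \cdot | ]$-norm; the q.s.\ invariance $\qv{T^{h'}(B)} = \qv{B}$ also deserves some care, but follows readily from the upper expectation representation once the pathwise identity under each $P_{\theta}$ is established.
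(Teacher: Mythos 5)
Your proposal is correct and follows essentially the same route as the paper: verify that $\Psi(\sqrt{\ve}B,\qv{B})$ is a bounded element of $\calL^1_G(\Omega)$ via a piecewise-linear time-interpolation of $\qv{B}$ (using the q.s.\ bound $\|\qv{B}_t-\qv{B}_s\|\le d\sigma_1^2|t-s|$), apply \tref{;tmain}, invoke the q.s.\ invariance $\qv{T^{h'}(B)}=\qv{B}$, and rescale the control by $\sqrt{\ve}$. The only cosmetic difference is that the paper does not push the approximants all the way into $\bLip(\Omega)$ but only checks that they are bounded and quasi-continuous, which already places them in $\calL^1_G(\Omega)$ by the characterization \eqref{;char1}; this slightly shortens the membership argument you flag as the main obstacle.
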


\begin{proof}
	The proofs of (i) and (ii) are similar, so we only show (i).
	
	We first check that the functional $\Psi (\sqrt{\ve} B , \qv{B})$
	is a bounded element of $\calL^1_G ( \Omega )$. 
	For $n \in \N$, let $\Delta_n = \{ 0= t_0 < t_1 < \dots < t_n =1 \}$ 
	be the partition of $[0,1]$ such that $t_k - t_{k-1} = 1/n$ for all $k= 1 \ddd n$.
	For every $y \in C ([0,1] ; \R^{d \times d})$ we denote by $( y )^{\Delta_n}$ 
	the polygonal approximation of $y$ such that 
	$( y )^{\Delta_n} (t_k) = y (t_k)$, $k=1 \ddd n$. 
	Since $\qv{B}_{t} \in ( \calL^1_G ( \Omega ) )^{d \times d} $ 
	for each $t \in [0,1]$, the mapping
	\begin{align}
		\Omega \ni \omega \mapsto
		\Psi_n  (\omega)
		:= 
		\Psi 
		\left( \sqrt{\ve} \omega ,  \left( \qv{B} \right)^{\Delta_n} (\omega) 
		\right)
	\end{align}
	has a q.c.\ version.
	$\Psi_n$ is also bounded, and hence is in $\calL^1_G ( \Omega )$.
	By the Lipschitz continuity of $\Psi$, we have
	\begin{align}
		\ol{\E} 
		\left[
		\left| 
		\Psi \left( \sqrt{\ve} B , \qv{B} \right) 
		-
		\Psi_n \left( B \right)
		\right|
		\right]
		&\le 
		\lip ( \Psi ) 
		\ol{\E}
		\left[ 
		\n{dist}
		\left( 
		\qv{B} , ( \qv{B} ) ^{\Delta_n}
		\right)
		\right] .
		\label{poly}
	\end{align}
	Note that
	\begin{align}
		&\n{dist}
		\left( 
		\qv{B} , ( \qv{B} ) ^{\Delta_n}
		\right) \\
		&= \max _{1 \le k \le n}
		\sup_{t_{k-1} \le t \le t_k}
		\left\| 
		\qv{B} _t - \qv{B} _{t_{k-1}} - n ( t-t_{k-1} ) ( \qv{B} _{t_k} - \qv{B} _{t_{k-1}} )
		\right\| \\
		&\le 2 \max _{1 \le k \le n}
		\sup_{t_{k-1} \le t \le t_{k}}
		\left\| 
		\qv{B} _t - \qv{B} _{t_{k-1}}
		\right\| , \label{eq:dist}
	\end{align}
	and that
	$\left\| \qv{B}_t - \qv{B}_s \right\| \le d \sigma_1^2 |t-s|$ 
	for all $0 \le s,t \le 1$ q.s.
	Then the right-hand side of \eqref{poly} is estimated from above by 
	\begin{align}
		\frac{2 d \lip ( \Psi ) \sigma_1^2 }{n} ,
	\end{align}
	which tends to $0$ as $n \to \infty$,
	and hence $\Psi \left( \sqrt{\ve} B , \qv{B} \right) \in \calL^1_G ( \Omega )$.
	
	Now let us verify \eqref{eq:a1}.
	By \tref{;tmain}, we have
	\begin{align}
		&\ve \log \E 
		\left[ 
		\exp \left( \frac{ \Psi (\sqrt{\ve} B , \qv{B} ) }{ \ve } \right) 
		\right] \\
		&= 
		\sup_{h \in ( \MG )^d }
		\E 
		\left[ 
		\Psi \left( \sqrt{\ve} T^h (B) , \qv{T^h (B)} \right)
		- \frac{\ve}{2} \int_0^1 h_s \cdot \left( d \qv{B}_s \, h_s \right) 
		\right] \\
		&= 
		\sup_{h \in ( \MG )^d }
		\E 
		\left[
		\Psi 
		\left( \sqrt{\ve} B + \int_0^{\cdot} d \qv{B}_s \, \sqrt{\ve} h_s , \qv{B} \right)
		- \frac{1}{2} \int_0^1 
		\sqrt{\ve} h_s \cdot \left( d \qv{B}_s \, \sqrt{\ve} h_s \right)
		\right] \\
		&= 
		\sup_{h \in ( \MG )^d }
		\E 
		\left[
		\Psi \left( \sqrt{\ve} B + \int_0^{\cdot} d \qv{B}_s \, h_s , \qv{B} \right)
		- \frac{1}{2} \int_0^1 h_s \cdot \left( d \qv{B}_s \, h_s \right)
		\right],
	\end{align}
	which is \eqref{eq:a1}.
\end{proof}

By using \lref{lem:apl}, we prove \pref{prop:apl}.

\begin{proof}[Proof of \pref{prop:apl}]
	(i) By the Lipschitz continuity of $\Psi$, we have
	\begin{align}
		&\begin{aligned}
			\left| 
			\sup_{h \in ( \MG )^d }
			\E 
			\left[ 
			\Psi \left( \sqrt{\ve} B + \int_0^{\cdot} d \qv{B}_s \, h_s , \qv{B} \right)
			- \frac{1}{2} \int_0^1 h_s \cdot \left( d \qv{B}_s \, h_s \right)
			\right] 
			\right. \\
			\left. 
			- \sup_{h \in ( \MG )^d }
			\E 
			\left[ 
			\Psi \left( \int_0^{\cdot} d \qv{B}_s \, h_s , \qv{B} \right)
			- \frac{1}{2} \int_0^1 h_s \cdot \left( d \qv{B}_s \, h_s \right)
			\right] 
			\right|
		\end{aligned} \\
		&\le 
		\sup_{h \in ( \MG )^d} 
		\E 
		\left[ 
		\left|
		\Psi \left( \sqrt{\ve} B + \int_0^{\cdot} d \qv{B}_s \, h_s , \qv{B} \right)
		- \Psi \left( \int_0^{\cdot} d \qv{B}_s \, h_s , \qv{B} \right) 
		\right| 
		\right] \\
		&\le 
		C \sqrt{\ve} . \label{eq:a3}
	\end{align}
	Here $C := \lip ( \Psi ) \ol{\E} \left[ \sup_{0 \le t \le 1} | B_t | \right]$,
	whose finiteness follows from the Cauchy-Schwarz inequality and Doob's inequality: 
	\begin{align}
		\ol{\E} 
		\left[ \sup_{0 \le t \le 1} | B_t | \right]
		&\le
		\ol{\E} 
		\left[ \sup_{0 \le t \le 1} | B_t |^2 \right] ^{1/2} \\
		&= 
		\sup_{\theta \in \cA} 
		E_P 
		\left[ 
		\sup_{0 \le t \le 1} 
		\left| \int_0^t \theta _s \, d W _s \right| ^2 
		\right] ^{1/2} \\
		&\le
		2 \sup_{\theta \in \cA} 
		E_P 
		\left[
		\int_0^1 \tr [ \theta _s \theta _s^* ] \, d s
		\right] ^{1/2}
		\le 2 \sqrt{d} \sigma _1 ;
		\label{C}
	\end{align}
	for the equality, 
	recall that, in the definition of the upper expectation $\ol{\E}$, 
	the supremum is taken over the laws of 
	$\int _0^{\cdot} \theta _s \, d W_s$, $\theta \in \cA$, 
	under the probability measure $P$.
	Combining \lref{lem:apl} with \eqref{eq:a3}, 
	we see that
	\begin{align}
		&\lim_{\ve \to 0} 
		\ve\log \E 
		\left[
		\exp \left( \frac{ \Psi ( \sqrt{\ve} B , \qv{B} ) }{\ve} \right) 
		\right] \\
		&= 
		\sup_{h \in ( \MG )^d }
		\E 
		\left[ 
		\Psi \left( \int_0^{\cdot} d \qv{B}_s \, h_s , \qv{B} \right)
		- \frac{1}{2} \int_0^1 h_s \cdot \left( d \qv{B}_s \, h_s \right)
		\right] .
	\end{align}
	Hence what to show is that
	\begin{align}\label{eq:a4}
		&\sup_{h \in ( \MG )^d }
		\E 
		\left[ 
		\Psi \left( \int_0^{\cdot} d \qv{B}_s \, h_s , \qv{B} \right)
		- \frac{1}{2} \int_0^1 h_s \cdot \left( d \qv{B}_s \, h_s \right)
		\right] \\
		&= 
		\sup_{(x,y)} 
		\left\{ \Psi (x,y) - J (x,y) \right\} .
	\end{align}

	First we prove the upper bound.
	For $\theta \in \cA$ and $h \in ( \MG )^d$ , we set
	\begin{align}
		X_t 
		\equiv X_t^{( \theta , h )} 
		:= 
		\int_0^t \theta_s \theta_s^* h^{(\theta)}_s \, d s ,
		\qd
		Y_t 
		\equiv Y_t^{( \theta )}
		:= 
		\int_0^t \theta_s \theta_s^* \, d s
		\qn{for } 0 \le t \le 1 .
	\end{align}
	Here $h^{(\theta)}$ is defined by
	\begin{align}
		h^{(\theta)}_t 
		= 
		h_t \left( \int_0^{\cdot} \theta_s \, d W_s \right) 
		\qn{for } 0 \le t \le 1 .
	\end{align}
	Then $X \in \bH$ and $Y \in \A$ $P$-a.s., 
	and hence
	\begin{align}
		E_{P_{\theta}} 
		\left[ 
		\Psi \left( \int_0^{\cdot} d \qv{B}_s \, h_s , \qv{B} \right)
		- \frac{1}{2} \int_0^1 h_s \cdot \left( d \qv{B}_s \, h_s \right) 
		\right]
		&= 
		E_P 
		\left[ \Psi (X,Y) - J (X,Y) \right] \\
		&\le 
		\sup_{(x,y)} 
		\left\{ \Psi (x,y) - J (x,y) \right\} .
	\end{align}
	Since $\theta \in \cA$ and $h \in ( \MG )^d$ 
	are arbitrary, we get the upper bound in \eqref{eq:a4}.

	Next we prove the lower bound in \eqref{eq:a4}.
	If $x \not\in \bH$ or $y \not \in \A$, 
	the right-hand side of \eqref{eq:a4} is $- \infty$, 
	so we take an arbitrary $(x,y) \in \bH \times \A$.
	Let $y$ be written as
	\begin{align}
		y(t) 
		= 
		\int_0^t g(s) g(s)^* \, d s ,
		\qd 0 \le t \le 1,
	\end{align}
	for some deterministic measurable function $g : [0,1] \to \Theta$.
	We denote by $P_g$ the law of $\int_0^{\cdot} g (s) \, d W_s$ 
	and define a deterministic function $\eta$ by
	\begin{align}
		\eta_t
		= 
		\dot{y} (t)^{-1} \dot{x} (t)
		\qn{for a.e.\ } t \in [0,1] .
	\end{align}
	Then $\eta$ is in $( \MG )^d$, whence
	\begin{align}
		&\sup_{h \in ( \MG )^d }
		\E 
		\left[ 
		\Psi \left( \int_0^{\cdot} d \qv{B}_s \, h_s , \qv{B} \right)
		- \frac{1}{2} \int_0^1 h_s \cdot \left( d \qv{B}_s \, h_s \right)
		\right] \\
		&\ge 
		E_{P_g} 
		\left[ 
		\Psi \left( \int_0^{\cdot} d \qv{B}_s \, \eta_s , \qv{B} \right)
		- \frac{1}{2} \int_0^1 \eta_s \cdot \left( d \qv{B}_s \, \eta_s \right) 
		\right] \\
		&= 
		E_P 
		\left[ 
		\Psi \left( \int_0^{\cdot} \dot{y} (s) \eta_s \, d s , y  \right)
		- \frac{1}{2} \int_0^1 \eta_s \cdot \left( \dot{y} (s) \eta_s \right) d s 
		\right] \\
		&=
		\Psi (x,y) -J (x,y).
	\end{align}
	Taking the supremum of the right-hand side, we obtain the lower bound.

	(ii) Similarly to (i), we have by \lref{lem:apl} 
	\begin{align}
		&\lim_{\ve \to 0} 
		\ve \log \E 
		\left[
		\exp \left( \frac{ \Phi ( \sqrt{\ve} B ) }{\ve} \right) 
		\right] \\
		&= 
		\sup_{h \in ( \MG )^d}
		\E 
		\left[ 
		\Phi \left( \int_0^{\cdot} d \qv{B}_s \, h_s \right)
		-\frac{1}{2} \int_0^1 h_s \cdot \left( d \qv{B}_s \, h_s \right) 
		\right] .
	\end{align}
	Therefore it is sufficient to show
	\begin{align}\label{eq:a5}
	\hspace{-8mm}
		\sup_{h \in ( \MG )^d}
		\E 
		\left[ 
		\Phi \left( \int_0^{\cdot} d \qv{B}_s \, h_s \right)
		-\frac{1}{2} \int_0^1 h_s \cdot \left( d \qv{B}_s \, h_s \right) 
		\right]
		= 
		\sup_{x} 
		\left\{ \Phi (x) - I (x) \right\} .
	\end{align}

	For the upper bound, take $\theta \in \cA$ and $h \in ( \MG )^d$
	arbitrarily and let $X \equiv X^{(\theta ,h)}$ be as in the proof of (i).
	Then we have
	\begin{align}
		E_{P_{\theta}} 
		\left[ 
		\Phi \left( \int_0^{\cdot} d \qv{B}_s \, h_s \right)
		-\frac{1}{2} \int_0^1 h_s \cdot \left( d \qv{B}_s \, h_s \right) 
		\right]
		&= 
		E_P 
		\left[ 
		\Phi (X) 
		- \frac{1}{2} \int_0^1 \left| \theta_s^{-1} \dot{X}_s \right|^2 d s 
		\right] \\
		&\le  
		E_P 
		\left[ \Phi (X) - I (X) \right] \\
		&\le 
		\sup_{x} 
		\left\{ \Phi (x) - I (x) \right\} .
	\end{align}

	For the lower bound in \eqref{eq:a5}, we fix any $x \in \bH$.
	By the measurable selection (see, e.g., \cite[Lemma~A.1]{Boue;98}), 
	there exists a measurable map $\Gamma : \R^d \to \Theta$ such that
	\begin{align}
		\left| \Gamma ( \xi )^{-1} \xi \right|
		= 
		\inf_{\gamma \in \Theta} 
		\left| \gamma^{-1} \xi \right|
		\qn{for all } \xi \in \R^d .
	\end{align}
	For such $\Gamma$, 
	define $g (s) := \Gamma \left( \dot{x} (s) \right)$ 
	for a.e.\ $s \in [0,1]$,
	and note that
	\begin{align}
		I (x)
		= 
		\frac{1}{2} \int_0^1 \left| g(s)^{-1} \dot{x} (s) \right|^2 d s.
	\end{align}
	Denote by $P_g$ the law of $\int_0^{\cdot} g(s) \, d W_s$ and set
	\begin{align}
		z(t)
		:= 
		\int_0^t \left( g(s) g(s)^* \right)^{-1} \dot{x} (s) \, d s.
	\end{align}
	Since $\dot{z} \in ( \MG )^d$, it follows that
	\begin{align}
		&\sup_{h \in ( \MG )^d}
		\E 
		\left[ 
		\Phi \left( \int_0^{\cdot} d \qv{B}_s \, h_s \right)
		-\frac{1}{2} \int_0^1 h_s \cdot \left( d \qv{B}_s \, h_s \right) 
		\right] \\
		&\ge E_{P_g}
		\left[ 
		\Phi \left( \int_0^{\cdot} d \qv{B}_s \, \dot{z} (s) \right)
		- \frac{1}{2} \int_0^1  \dot{z} (s) \cdot \left( d \qv{B}_s \, \dot{z} (s) \right) 
		\right] \\
		&= 
		\Phi (x)
		- \frac{1}{2} \int_0^1 \left| g(s)^{-1} \dot{x} (s) \right|^2 \, d s \\
		&= 
		\Phi (x) - I (x) ,
	\end{align}
	and hence by taking the supremum of the rightmost side, 
	we obtain the lower bound in \eqref{eq:a5}.
\end{proof}

\begin{rem}
	Let $\calE [X] := - \E [-X]$ for $X \in \calL^1_G (\Omega)$.
	By similar arguments to those in Section~\ref{;pr2}, 
	we can also derive a variational representation
	\begin{align}
		\log \calE
		\left[
		e^{f (B)}
		\right]
		= \sup_{ h \in ( \MG )^d } \calE 
		\left[
		f \left( B  + \int_0^{\cdot} d \qv{B}_s \, h_s \right ) 
		- \frac{1}{2} \int_0^1 h_s \cdot \left( d \qv{B}_s \, h_s \right)
		\right]
		\label{lvar}
	\end{align}
	for any bounded elements $f$ of $\calL^1_G ( \Omega )$.
	Note that by \eqref{;char2}, 
	$\calE [X] = \inf_{\theta \in \cA} E_{ P_{\theta} } [X]$,
	to which we may associate the ``lower" capacity $\underline{c}$ via
	\begin{align}
		\underline{c} \left( A \right)
		= \inf_{\theta \in \cA} P_{\theta} (A)
		\qn{for } A \in \calB ( \Omega ) .
	\end{align}
	We expect that similar applications to those given in this section 
	are also possible under $\calE$ and $\underline{c}$.
	A key to the validity of \eqref{lvar} is the identity \eqref{sch} in \lref{lem:sche}.
\end{rem}

\section{An absolute continuity result}\label{s:rem}

We conclude this paper with the proof of 
an absolute continuity relationship between 
$B$ and $B + \int_0^{\cdot} d \qv{B}_s \, h_s$ under the capacity, 
which plays a key role in Subsection~\ref{;pr1} and is of independent interest itself. 
Let $c$ and $\hat{c}$ be the capacities 
given in \eqref{c} and \eqref{hatc}, respectively, 
and $T^h (B)$ for $h \in ( \MG )^d$ as in Section~\ref{;pr2}. 

\begin{prop}\label{;abs}
	Let $h \in ( \MG )^d$ be arbitrary. 
	Then for any $\ve > 0$, there exists $\delta >0$ such that
	\begin{align}
		c \left( T^h (B) \in A \right)
		< \ve 
		\qn{for all } A \in \calB ( \Omega )
		\n{ with } \hat{c} \left( A \right) < \delta .
		\label{abs1}
	\end{align}
	In particular, 
	\begin{align}
		c \left( T^h (B) \in N \right)
		= 0 
		\qn{for all } N \in \calB ( \Omega )
		\n{ with } c \left( N \right) = 0 .
		\label{abs2}
	\end{align}
\end{prop}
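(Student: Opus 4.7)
The second claim \eqref{abs2} is a direct consequence of \eqref{abs1} (apply it with any $\varepsilon > 0$ to $A = N$ with $\hat c (N) = 0$), so my plan focuses on \eqref{abs1}. The strategy is to first prove the estimate for bounded $h$ via classical Girsanov on each $P_\theta$, and then extend to arbitrary $h \in ( \MG )^d$ by approximation.

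For bounded $h$, I would fix $\theta \in \cA$ and realize $B$ as $\int_0^\cdot \theta_s\, dW_s$ on the underlying Brownian space, so that under $P_\theta$, $T^h (B) = \int_0^\cdot \theta_s\, dW_s + \int_0^\cdot \theta_s \theta_s^* h_s\, ds$. Boundedness of $h$ and of $\Theta$ secures Novikov's condition, so the exponential $L := \exp ( -\int_0^1 \theta_s^* h_s \cdot dW_s - \frac{1}{2} \int_0^1 | \theta_s^* h_s|^2\, ds )$ is a true martingale, and classical Girsanov gives a measure $Q$ with $dQ = L\, dP$ under which $T^h (B)$ has law $P_\theta$. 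A Cauchy--Schwarz split then yields
\begin{align}
	P_\theta (T^h (B) \in A)
	= E_Q [L^{-1} \one_{T^h (B) \in A}]
	\le E_P [L^{-1}]^{1/2} P_\theta (A)^{1/2} ,
\end{align}
and boundedness of $h$ and $\Theta$ majorizes $E_P [L^{-1}]$ by a constant $C(h)$ uniform in $\theta$. Taking the supremum over $\theta$ and using $c \le \hat c$ yields $c(T^h (B) \in A) \le \sqrt{C(h)} \, \hat c (A)^{1/2}$, which is \eqref{abs1} for bounded $h$ with $\delta = ( \varepsilon / \sqrt{C(h)} )^2$.

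For a general $h \in ( \MG )^d$, set $h^N := h \one_{|h| \le N}$, so that $\| h - h^N \|_{\MGR} \to 0$. From $d \qv{B}_s \le \sigma_1^2 \, ds$ q.s.\ and Cauchy--Schwarz, one obtains $\ol \E [ \sup_t | T^h (B)_t - T^{h^N} (B)_t | ] \le \sigma_1^2 \| h - h^N \|_{\MGR} \to 0$, so by Chebyshev $T^{h^N} (B) \to T^h (B)$ in capacity. The natural combination of this convergence with the bounded-case estimate for $h^N$ via the inclusion $\{ T^h (B) \in A \} \subset \{ T^{h^N} (B) \in A^\eta \} \cup \{ \rho (T^h (B), T^{h^N} (B)) > \eta \}$ requires enlarging $A$ to its $\eta$-neighborhood $A^\eta$.

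The main obstacle is precisely this enlargement, since $\hat c (A^\eta)$ need not be close to $\hat c (A)$ for a general Borel $A$. I expect the cleanest remedy to be to localize the bounded-case argument at the stopping times $\tau_N := \inf \{ t : \int_0^t |h_s|^2 \, ds > N \} \wedge 1$, which converge to $1$ quasi-surely because $\int_0^1|h_s|^2\, ds < \infty$ q.s.\ for $h \in ( \MG )^d$: on $\{ \tau_N = 1 \}$ the Girsanov--Cauchy-Schwarz argument applies to $h \one_{[0, \tau_N]}$ directly, producing an estimate in terms of $\hat c (A)$ itself, while the contribution from $\{ \tau_N < 1 \}$ is made $< \varepsilon / 2$ by choosing $N$ large. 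Executing this localization within the sublinear framework---so that the bounds on $E_P [L^{-1}]$ remain uniform in $\theta \in \cA$---is the technical heart of the proof.
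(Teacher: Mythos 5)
Your reduction of \eqref{abs2} to \eqref{abs1} is fine once you add the observation (made in the paper just before Remark~\ref{nimed}) that $c(N)=0$ implies $\hat{c}(N)=0$, since \eqref{abs1} is stated in terms of $\hat{c}$. The main argument, however, has a genuine gap at its very first step: the identification of the law of $T^h(B)$ under $Q$. Under $Q$ the process $\tilde{W}:=W+\int_0^{\cdot}\theta_s^* h_s\,ds$ is indeed a Brownian motion and $T^h(B)=\int_0^{\cdot}\theta_s\,d\tilde{W}_s$, but $\theta$ is progressively measurable with respect to the filtration of the \emph{original} $W$, not of $\tilde{W}$, and the joint law of $(\theta,\tilde{W})$ under $Q$ is not that of $(\theta,W)$ under $P$. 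So $Q\circ(T^h(B))^{-1}$ is in general not $P_\theta$, nor even $P_{\theta'}$ for some $\theta'\in\cA$; it is only the law of a continuous martingale whose quadratic-variation density takes values in $\{\gamma\gamma^*:\gamma\in\Theta\}$ relative to a larger filtration. Bounding $Q(T^h(B)\in A)$ by $\hat{c}(A)$ for an \emph{arbitrary Borel} set $A$ would then require knowing that $\hat{c}$ dominates all such martingale laws, which is precisely the kind of input the paper does not have at its disposal (cf.\ Remark~\ref{nimed}). The paper instead stays inside the $G$-framework: it reduces to compact $K\subset A$ by inner regularity of each $P_\theta\circ(T^h(B))^{-1}$, replaces $\one_K$ by Lipschitz functions $g_m(\rho(\cdot,K))$ so that all functionals lie in $\calL^1_G(\Omega)$ (Lemma~\ref{;key}), and applies the $G$-Girsanov theorem (Theorem~\ref{;gir}) through the process $\OL{h^n}$ built so that $T^{h^n}\circ T^{-\OL{h^n}}(B)=B$.

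The second gap is the passage to general $h\in(\MG)^d$, which you explicitly leave unexecuted. Your Cauchy--Schwarz constant is of order $e^{\sigma_1^2\sup_t\||h_t|\|_\infty^2}$, which blows up along any $\MG$-approximation of an unbounded $h$, so some replacement is indeed unavoidable; but the proposed stopping-time localization raises its own unresolved issues (whether $h\one_{[0,\tau_N]}$ lies in $(\MG)^d$, whether the Novikov/Girsanov step applies to it uniformly in $\theta\in\cA$, and how the two events are recombined without enlarging $A$). The paper's resolution is the relative-entropy device of Bou\'e--Dupuis: in place of Cauchy--Schwarz it uses $\one_{(\alpha,\infty)}(x)\,x\log x\le e^{-1}+x\log x$ together with $\E^{\OL{h}}[\log D^{(\OL{h})}_1]\le\tfrac12\sigma_1^2\|h\|_{\MGR}^2$ (Lemma~\ref{;appli}), which yields $\E[g_m(\rho(T^h(B),K))]\le\alpha\,\E[g_m(\rho(B,K))]+C_h/\log\alpha$ with $C_h$ controlled by the $\MGR$-norm alone; this bound is stable along simple approximations $h^n\in(\sbl)^d$ and gives \eqref{abs1} with $\alpha=1/\sqrt{\delta}$. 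Without either that entropy estimate or a completed localization, your argument does not close.
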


\begin{rem}
	Note that, since the reverse Fatou lemma does not hold in full generality under $c$, 
	we cannot conclude from \eqref{abs2} the stronger result than \eqref{abs1}:
	\begin{align}
		c \left( T^h (B) \in A \right) < \ve
		\qn{for all } A \in \calB ( \Omega ) 
		\n{ with } c \left( A \right) < \delta .
	\end{align}
\end{rem}

We start with the following lemma. 

\begin{lem}\label{;appli}
	Let $h \in ( \sbl )^d$. It holds that 
	\begin{align}
		\E ^{\OL{h}}
		\left[ 
		\log D^{( \OL{h} )}_1
		\right] 
		\le 
		\frac{1}{2} \sigma_1 ^2 \| h \|_{ \MGR }^2 .
	\end{align}
	Here $\OL{h}$ is the associated bounded element of $( M^{2,0}_G (0,1) )^d$ 
	defined so that the relation \eqref{;barh} holds. 
\end{lem}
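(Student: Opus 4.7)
The plan is to compute $\E^{\OL{h}}[\log D^{(\OL{h})}_1]$ explicitly by isolating a term that vanishes under $\E^{\OL{h}}$ and a term that can be transformed back to the original $G$-expectation via Girsanov's formula. By construction, since $h \in (\sbl)^d$, the process $\OL{h}$ is bounded and lies in $(M^{2,0}_G(0,1))^d$, so all the hypotheses of Theorem~\ref{;gir}, Proposition~\ref{;nov}, and Lemma~\ref{l:sym} are met.

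First I would write, by the definition \eqref{Dh} of $D^{(\OL{h})}_1$,
\begin{align}
\log D^{(\OL{h})}_1
= \int_0^1 \OL{h}_s \cdot dB_s - \frac{1}{2} \int_0^1 \OL{h}_s \cdot \left( d\qv{B}_s\, \OL{h}_s \right).
\end{align}
Setting $\ol{B} := B - \int_0^{\cdot} d\qv{B}_s \, \OL{h}_s = T^{-\OL{h}}(B)$, I rewrite the stochastic integral as
$\int_0^1 \OL{h}_s \cdot dB_s = \int_0^1 \OL{h}_s \cdot d\ol{B}_s + \int_0^1 \OL{h}_s \cdot ( d\qv{B}_s\, \OL{h}_s )$, so that
\begin{align}
\log D^{(\OL{h})}_1
= \int_0^1 \OL{h}_s \cdot d\ol{B}_s + \frac{1}{2} \int_0^1 \OL{h}_s \cdot \left( d\qv{B}_s\, \OL{h}_s \right).
\end{align}
Applying $\E^{\OL{h}}$ to both sides and using Lemma~\ref{l:sym}(ii) (which applies since $\OL{h}$ is bounded) to eliminate the first term, I obtain
\begin{align}
\E^{\OL{h}} \left[\log D^{(\OL{h})}_1 \right]
= \frac{1}{2} \, \E^{\OL{h}} \left[ \int_0^1 \OL{h}_s \cdot \left( d\qv{B}_s\, \OL{h}_s \right) \right].
\end{align}

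Next I would transform the right-hand side back to an $\E$-expectation. Using the defining property \eqref{;barh}, namely $\OL{h}_t = h_t(\ol{B})$, together with the identity $\qv{B} = \qv{\ol{B}}$, the integrand becomes a functional of $\ol{B}$:
\begin{align}
\int_0^1 \OL{h}_s \cdot \left( d\qv{B}_s\, \OL{h}_s \right)
= \int_0^1 h_s(\ol{B}) \cdot \left( d\qv{\ol{B}}_s\, h_s(\ol{B}) \right).
\end{align}
Since $h \in (\sbl)^d$, the functional $\int_0^1 h_s \cdot (d\qv{B}_s\, h_s)$ belongs to $\calL^1_G(\Omega)$, so Remark~\ref{rem:gir} (Girsanov's formula in the form \eqref{eq:gir}) gives
\begin{align}
\E^{\OL{h}} \left[ \int_0^1 h_s(\ol{B}) \cdot \left( d\qv{\ol{B}}_s\, h_s(\ol{B}) \right) \right]
= \E \left[ \int_0^1 h_s \cdot \left( d\qv{B}_s\, h_s \right) \right].
\end{align}

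Finally I would invoke the pointwise bound coming from \eqref{;qv}: quasi-surely, $d\qv{B}_s/ds$ takes values in $\{\gamma \gamma^* : \gamma \in \Theta\}$, whose operator norms are bounded by $\sigma_1^2$. Hence q.s.\
$\int_0^1 h_s \cdot (d\qv{B}_s\, h_s) \le \sigma_1^2 \int_0^1 |h_s|^2 ds$, and taking $G$-expectation (which is dominated by $\ol{\E}$ by \eqref{;char2}, and which commutes with the deterministic time integral in the sense $\E[\int_0^1 |h_s|^2 ds] \le \int_0^1 \E[|h_s|^2]ds$) yields the desired estimate
\begin{align}
\E^{\OL{h}} \left[\log D^{(\OL{h})}_1 \right]
\le \frac{1}{2} \sigma_1^2 \int_0^1 \E\left[|h_s|^2\right] ds
= \frac{1}{2} \sigma_1^2 \|h\|_{\MGR}^2.
\end{align}
I expect the main subtle point to be the justified application of Girsanov in the transformation step, in particular ensuring that the integrand $\int_0^1 h_s \cdot (d\qv{B}_s h_s)$ lies in $\calL^1_G(\Omega)$ so that \eqref{eq:gir} applies; this is where the simple-process assumption $h \in (\sbl)^d$ is crucial, and the remaining estimates are then routine.
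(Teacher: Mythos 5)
Your proof is correct and follows essentially the same route as the paper's: the same decomposition of $\log D^{(\OL{h})}_1$ into $\int_0^1 \OL{h}_s \cdot d\ol{B}_s + \frac{1}{2}\int_0^1 \OL{h}_s\cdot(d\qv{B}_s\,\OL{h}_s)$, elimination of the symmetric stochastic-integral term via Lemma~\ref{l:sym}(ii), transfer back to $\E$ via \eqref{;barh} and Girsanov's formula, and the final bound from \eqref{;qv}. The only cosmetic difference is that the paper opens by explicitly recording the integrability $(\log D^{(\OL{h})}_1)D^{(\OL{h})}_1\in\calL^1_G(\Omega)$ before manipulating the expectations, a point you address only implicitly.
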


\begin{proof}
	First observe that by the boundedness of $h$, 
	$( \log D^{( \OL{h} )}_1 ) D^{( \OL{h} )}_1 \in \calL^1_G ( \Omega )$,
	which may be deduced from the proof of (ii) of \lref{l:sym}.
	Denoting $\bar{B}=T^{-\OL{h}} (B)$, we have 
	\begin{align}
		\log D^{( \OL{h} )}_1
		&=
		\int _0^1 \OL{h}_s \cdot d \bar{B}_s
		+
		\frac{1}{2} \int _0^1 \OL{h}_s \cdot 
		\left( 
		d \qv{B}_s \, \OL{h}_s
		\right) .
		\label{e:dot}
	\end{align}
	Taking the sublinear expectation of both sides 
	of \eqref{e:dot} under $\E^{ \OL{h} }$,
	we have by (ii) of \lref{l:sym},
	\begin{align}
		\E^{\OL{h}}
		\left[
		\log D^{( \OL{h} )}_1
		\right]
		&=
		\frac{1}{2} \E^{\OL{h}}
		\left[
		\int _0^1 \OL{h}_s \cdot 
		\left( 
		d \qv{B}_s \, \OL{h}_s
		\right)
		\right] \\
		&=
		\frac{1}{2} \E^{\OL{h}}
		\left[
		\int _0^1 h_s (\bar{B}) \cdot 
		\left( 
		d \qv{\bar{B}}_s \, h_s (\bar{B})
		\right)
		\right] ,
		\label{e:omr}
	\end{align}
	where the second equality follows from \eqref{;barh} 
	and the obvious identity $\qv{B}=\qv{\bar{B}}$.
	Then by \tref{;gir} and \rref{rem:gir}, 
	\begin{align}
		\frac{1}{2} \E ^{\OL{h}}
		\left[ 
		\int _0^1 h_s (\bar{B}) \cdot 
		\left( 
		d \qv{\bar{B}}_s \, h_s (\bar{B})
		\right) 
		\right] 
		&=
		\frac{1}{2} \E 
		\left[ 
		\int _0^1 h_s (B) \cdot 
		\left( 
		d \qv{B} _s \, h_s(B)
		\right) 
		\right] \\
		&\le 
		\frac{1}{2} \sigma _1^2
		\E 
		\left[ 
		\int _0^1 | h_s (B) |^2 \, d s
		\right] , 
	\end{align}
	and therefore the assertion is proved. 
\end{proof}

\begin{lem}\label{;key}
	Let $f : \Omega \to \R$ be bounded and Lipschitz continuous.
	Then, for every $h \in ( \MG )^d$, we have 
	$f \left( T^h (B) \right) \in \calL^1_G (\Omega )$. 
\end{lem}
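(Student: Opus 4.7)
The strategy is to approximate $h$ by simple processes in $(\sbl)^d$ and exploit the characterization \eqref{;char1} of $\calL^1_G(\Omega)$. Since $f$ is bounded, $|f(T^h(B))| \le \|f\|_\infty$ q.s., so the tail condition in \eqref{;char1} is automatic; it suffices to exhibit a quasi-continuous version of $f(T^h(B))$.

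First, by density of $(\sbl)^d$ in $(\MG)^d$, choose $\{h^n\} \subset (\sbl)^d$ with $\|h^n - h\|_{\MGR} \to 0$. For each such simple $h^n$, the integral $\int_0^t d\qv{B}_s \, h^n_s$ is a finite linear combination of increments of the quasi-continuous process $\qv{B}$ with $\bLip(\Omega_{t_k})$-valued coefficients, continuous in $t$ q.s. Hence $T^{h^n}(B)$ admits a jointly q.c.\ continuous-path version, and composing with the bounded Lipschitz $f$ yields $f(T^{h^n}(B)) \in \calL^1_G(\Omega)$.

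The key estimate uses \eqref{;qv} and \eqref{;variance}, which together imply $\|d\qv{B}_s / ds\|_{\mathrm{op}} \le \sigma_1^2$ q.s. Combined with the Cauchy--Schwarz inequality on $[0,1]$, one obtains
\begin{align}
\sup_{0 \le t \le 1} \left| T^{h^n}(B)_t - T^{h^m}(B)_t \right|
\le \sigma_1^2 \left( \int_0^1 |h^n_s - h^m_s|^2 \, ds \right)^{1/2}
\qn{q.s.}
\end{align}
Lipschitz continuity of $f$, followed by a sublinear Fubini-type bound ($\E[\int \cdot \, ds] \le \int \E[\cdot] \, ds$, immediate from $\E = \sup_\theta E_{P_\theta}$) and the Jensen inequality $\E[X^{1/2}] \le \E[X]^{1/2}$, then gives
\begin{align}
\E \left[ \left| f(T^{h^n}(B)) - f(T^{h^m}(B)) \right| \right]
\le \lip(f) \, \sigma_1^2 \, \| h^n - h^m \|_{\MGR} ,
\end{align}
so $\{f(T^{h^n}(B))\}$ is Cauchy in the $\E$-norm, converging to some $Y \in \calL^1_G(\Omega)$ by completeness.

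To identify $Y$ with $f(T^h(B))$, extract a subsequence $\{n_k\}$ for which $\|h^{n_k} - h^{n_{k+1}}\|_{\MGR}$ is summable. Chebyshev's inequality applied under each $P \in \calP$ and the Borel--Cantelli lemma then show that the paths $T^{h^{n_k}}(B)$ converge uniformly in $t$, q.s., to a continuous-path q.c.\ limit, which we designate as the q.c.\ version of $T^h(B)$. Lipschitz continuity of $f$ gives $f(T^{h^{n_k}}(B)) \to f(T^h(B))$ q.s., hence $Y = f(T^h(B))$ q.s. The most delicate point is this coordination of $\E$-norm convergence with uniform q.s.\ convergence of the underlying paths: $T^h(B)$ is a priori only defined componentwise in $\calL^1_G(\Omega_t)^d$ at each $t$, and the uniform-in-$t$ q.s.\ bound displayed above is exactly what supplies the joint q.c.\ continuous-path version needed to compose with $f$.
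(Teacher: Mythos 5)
Your route differs from the paper's: you approximate the integrand $h$ by simple processes and difference two transformed paths, whereas the paper fixes $h$ and approximates the path $T^h(B)$ by its polygonal interpolation over the dyadic-type partitions $\Delta_n$. Your Cauchy estimate in the $\E$-norm and the identification of the limit via a subsequence with summable $\MGR$-increments are fine. The genuine gap is the sentence ``Hence $T^{h^n}(B)$ admits a jointly q.c.\ continuous-path version'' for simple $h^n$. This is not immediate, and it is exactly the crux of the lemma. For a general bounded Lipschitz $f$ (not a cylinder function, which is the whole point of \lref{;key} as opposed to the situation in \pref{;comp}), one must compose $f$ with the \emph{path-valued} map $\omega \mapsto T^{h^n}(B)(\omega) \in C([0,1];\R^d)$, and quasi-continuity of that map reduces to quasi-continuity of $\omega \mapsto \qv{B}(\omega)$ as a $C([0,1];\R^{d\times d})$-valued map. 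What is available from the construction is only that each fixed-time marginal $\qv{B}_t$ (and $\int_0^t d\qv{B}_s\,h^n_s$) lies in $(\calL^1_G(\Omega_t))^{d\times d}$ and hence has a q.c.\ version for each fixed $t$; passing from countably many marginals to a single q.c.\ path-valued version requires its own approximation argument. Making $h^n$ simple does nothing to remove this difficulty, and your closing remark that the uniform-in-$t$ bound on $T^{h^n}(B)-T^{h^m}(B)$ ``supplies the joint q.c.\ version'' conflates the Cauchy estimate between two approximants with the base case that each approximant is itself q.c.\ as a path.

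The gap is fillable, and once filled your argument stands and is in one respect cleaner than the paper's. One can show that $\qv{B}$ has a q.c.\ $C([0,1];\R^{d\times d})$-valued version by taking its polygonal interpolations $(\qv{B})^{\Delta_n}$ (continuous functions of finitely many q.c.\ random variables, hence q.c.) and using the deterministic q.s.\ bound $\| \qv{B}_t - \qv{B}_s \| \le d\sigma_1^2 |t-s|$, which gives uniform convergence at a deterministic rate; a uniform q.s.\ limit of q.c.\ maps with deterministic rate is q.c. Since the identity $\omega \mapsto \omega$ is trivially continuous, $T^{h^n}(B)$ is then path-valued q.c.\ for simple $h^n$. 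With that lemma in hand, your differencing of $T^{h^n}(B)-T^{h^m}(B)$ cancels the $B$-increments entirely, so you never need to control the modulus of continuity of $B$ itself; the paper, by contrast, polygonalizes the full path $T^h(B)$ and must handle the term $I^1_n = \ol{\E}[\max_k \sup_{t_{k-1}\le t\le t_k}|B_t - B_{t_{k-1}}|]$ via tightness of $\{P_\theta\}$ and the Arzel\`a--Ascoli theorem. So your approach trades the tightness argument for the path-valued quasi-continuity of $\qv{B}$ --- a fair trade, but you must actually prove the latter rather than assert it.
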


\begin{proof}
	For each $n \in \N$, let the partition $\Delta _n$ of $[0,1]$ and 
	the polygonal approximation $(\omega )^{\Delta _{n}}$ 
	of $\omega \in \Omega$ as in the proof of \lref{lem:apl}. 
	Since $f$ is continuous and $\int _0^t d \qv{B} _s \, h_s$ 
	has a q.c.\ version for each $t \in [0,1]$, 
	it is clear that the functional 
	$f \left( \left( T^h (B) \right)^{ \Delta _n } \right) $ 
	also has a q.c.\ version, 
	hence belongs to $\calL^1_G ( \Omega )$ due to \eqref{;char1}.
	Therefore, in order to prove the lemma, it is sufficient to show 
	\begin{align}\label{;eqkey1}
		\ol{\E} 
		\left[ 
		\left| 
		f \left( T^h (B) \right)
		- f \left( \left( T^h (B) \right)^{ \Delta _n } \right) 
		\right| 
		\right] 
		\xrightarrow[ n \to \infty ]{} 0 .
	\end{align}
	To this end, note that, similarly to \eqref{eq:dist}, we have
	\begin{align}
		\rho \left( \omega , ( \omega )^{ \Delta_n } \right)
		&\le 
		2 \max _{1 \le k \le n}
		\sup_{t_{k-1} \le t \le t_{k}}
		\left| 
		\omega_t - \omega_{t_{k-1}}
		\right| 
		\qn{for } \omega \in \Omega .
	\end{align}
	Combining this estimate with Lipschitz continuity of $f$, we see that the 
	left-hand side of \eqref{;eqkey1} is bounded from above by 
	\begin{align}
		2 \lip (f) \left( I^1_n + I^2_n \right) 
	\end{align}
	with
	\begin{align}
		I^1_n
		&= 
		\ol{\E} 
		\left[
		\max_{1 \le k \le n} 
		\sup_{t_{k-1} \le t \le t_k}
		\left| B_t - B_{t_{k-1}} \right| 
		\right] , \\
		I^2_n
		&= 
		\ol{\E} 
		\left[
		\max_{1 \le k \le n} 
		\sup_{t_{k-1} \le t \le t_k}
		\left| \int_{t_{k-1}}^t d \qv{B}_s \, h_s \right| 
		\right] .
	\end{align}
	As to $I^2_n$,
	we see that by \eqref{;qv} and the Cauchy-Schwarz inequality,
	\begin{align}
		I_n^2
		&\le
		\sigma_1^2 \ol{\E}
		\left[
		\max_{1 \le k \le n} \sup_{t_{k-1} \le t \le t_k}
		\int _{t_{k-1}}^t | h_s | \, d s
		\right] \\
		&\le
		\sigma_1^2 \ol{\E}
		\left[
		\max_{1 \le k \le n} \sup_{t_{k-1} \le t \le t_k}
		\sqrt{t- t_{k-1}}
		\left(
		\int _0^1 | h_s |^2 \, d s
		\right) ^{1/2}
		\right] \\
		&\le 
		\sigma_1^2 n^{-1/2} \| h \|_{\MGR} .
	\end{align}
	Therefore $I^2_n$ tends to $0$ as $n \to \infty$.
	In order to see the convergence of $I^1_n$ to 0, fix $\ve >0$ arbitrarily.
	By the tightness of the family $\left\{ P_{\theta} : \theta \in \cA \right\}$,
	there exists a compact $K_{\ve} \subset \Omega$ 
	such that $c \left( K_{\ve}^c \right) \le \ve$.
	We bound $I^1_n$ from above by the sum
	\begin{align}\label{eq:jnb}
		\ol{\E} 
		\left[
		\max_{1 \le k \le n} 
		\sup_{t_{k-1} \le t \le t_k}
		\left| B_t - B_{t_{k-1}} \right|
		; K_{\ve}
		\right]
		+ 
		\ol{\E} 
		\left[
		\max_{1 \le k \le n} 
		\sup_{t_{k-1} \le t \le t_k}
		\left| B_t - B_{t_{k-1}} \right|
		; K_{\ve}^c	
		\right] .
	\end{align}
	Due to the uniform equicontinuity of $K_{\ve}$ by the Arzel\`a-Ascoli theorem 
	(see \cite[Theorem~7.2]{Billingsley;99}),
	the first term of \eqref{eq:jnb} converges to $0$ as $n \to \infty$.
	On the other hand,
	the second term of \eqref{eq:jnb} is dominated by
	\begin{align}
		2 \ol{\E} 
		\left[
		\sup_{0 \le t \le 1} \left| B_t \right|
		; K_{\ve}^c
		\right]
		\le 
		2 \ol{\E} 
		\left[
		\sup_{0 \le t \le 1} \left| B_t \right|^2
		\right] ^{1/2}
		c \left( K_{\ve}^c \right)^{1/2}
	\end{align}
	by the Cauchy-Schwarz inequality.
	Combining these with the estimate given in \eqref{C} yields
	\begin{align}
		\limsup_{n \to \infty} I^1_n
		\le 4 \sqrt{d} \sigma_1 \sqrt{\ve},
	\end{align}
	which leads to \eqref{;eqkey1} as $\ve$ is arbitrary.
\end{proof}

Using Lemmas~\ref{;appli} and \ref{;key}, we prove \pref{;abs}. 

\begin{proof}[Proof of \pref{;abs}]
	Let $\delta >0$ and $A \in \calB ( \Omega )$ be such that 
	$\hat{c} \left( A \right) < \delta$. 
	Let $h \in ( \MG )^d$ and fix a compact subset $K \subset A$ arbitrarily. 
	We take a sequence $\{ h^n \} _{n \in \N} \subset (\sbl )^d$ so that
	\begin{align}
		\lim_{n \to \infty}
		\| h - h^n \|_{ \MGR }
		= 0 . 
		\label{hn}
	\end{align}
	For each $m \in \N$, set the function $g_m$ by 
	$g_m (a) = 1- ma$ for $0 \le a \le1/m$ and 
	$g_m (a) =0$ for $a > 1/m$.
	Then the mapping $\Omega \ni \omega \mapsto g_m ( \rho ( \omega , K ) )$
	is bounded and Lipschitz continuous, and hence by \lref{;key},
	\begin{align}
		g_m \left( \rho \left( T^h (B) , K \right) \right) ,\ 
		g_m \left( \rho \left( T^{h^n} (B) , K \right) \right) 
		\in 
		\calL^1_G (\Omega ), 
		\qd m=1,2, \ldots .
		\label{gmL1G}
	\end{align}
	Fix $\alpha >1$. 
	We start with the proof of 
	\begin{align}\label{;estcap4}
		\E 
		\left[ 
		g_m \left( \rho \left( T^h (B) , K \right) \right) 
		\right] 
		\le 
		\alpha \E 
		\left[ 
		g_m \left( \rho \left( B , K \right) \right) 
		\right]
		+
		\frac{1}{\log \alpha} C_h , \\
		C_h
		:= 
		\frac{1}{e} 
		+ \frac{1}{2} \sigma _1^2 
		\sup _{n \in \N} 
		\| h^n \|_{\MGR}^2 ;
	\end{align}
	note that $C _h < \infty$ by \eqref{hn}. 
	For each $n\in \N $, 
	we bound the left-hand side of \eqref{;estcap4} from above by 
	\begin{align*}
		\E 
		\left[ 
		\left| 
		g_m \left( \rho \left( T^h (B) , K \right) \right) 
		-
		g_m \left( \rho \left( T^{h^n} (B) , K \right) \right) 
		\right| 
		\right]
		+
		\E 
		\left[ 
		g_m \left( \rho \left( T^{h^n} (B) , K \right) \right) 
		\right] . 
		\label{estgm}
	\end{align*}
	The first term is dominated by 
	\begin{align}
		\lip ( g_m ) \ol{\E}
		\left[
		\left|
		\sup _{0 \le t \le 1}
		\int_0^t d \qv{B} _s \, ( h_s - h^n_s )
		\right|
		\right]
		\le
		\lip ( g_m )
		\sigma _1^2 \| h - h^n \|_{ \MGR } ,
	\end{align}
	which vanishes by letting $n \to \infty $. 
	As to the second term of \eqref{estgm}, 
	we let $\OL{h^n} $ be a bounded element of $M^{2,0}_G (0,1)$ 
	that satisfies \eqref{;barh} with $h$ replaced by $h^n$. 
	Noting that \eqref{;barh} yields the relation 
	$T^{h^n} \circ T^{-\OL{h^n}} (B) = B$,
	we have, by \eqref{gmL1G} and \tref{;gir} (see \rref{rem:gir}), 
	\begin{align}
		\E 
		\left[ 
		g_m \left( \rho \left( T^{h^n} (B) , K \right) \right) 
		\right]
		=
		\E ^{\OL{h^n}}
		\left[ 
		g_m \left( \rho \left( B , K \right) \right) 
		\right] .
		\label{gm}
	\end{align}
	Following an argument in the proof of \cite[Lemma~2.8]{Boue;98},
	we estimate \eqref{gm} from above by
	\begin{align}
		&\ol{\E}
		\left[ 
		g_m \left( \rho \left( B , K \right) \right) D^{( \OL{h^n} )}_1 ;
		D^{( \OL{h^n} )}_1 \le \alpha
		\right]
		+
		\ol{\E}
		\left[ 
		g_m \left( \rho \left( B , K \right) \right) D^{( \OL{h^n} )}_1 ;
		D^{( \OL{h^n} )}_1 > \alpha
		\right] \\
		&\le
		\alpha
		\E 
		\left[ 
		g_m \left( \rho \left( B , K \right) \right) 
		\right]
		+
		\frac{1}{\log \alpha} 
		\ol{\E} 
		\left[
		g_m \left( \rho \left( B , K \right) \right) 
		D^{( \OL{h^n} )}_1 \log D^{( \OL{h^n} )}_1 ;
		D^{( \OL{h^n} )}_1 > \alpha
		\right] \\
		&\le
		\alpha
		\E 
		\left[ 
		g_m \left( \rho \left( B , K \right) \right) 
		\right]
		+
		\frac{1}{\log \alpha} 
		\left(
		\frac{1}{e}
		+
		\E 
		\left[
		D^{( \OL{h^n} )}_1 \log D^{( \OL{h^n} )}_1 
		\right]
		\right) .
		\label{estgm2}
	\end{align}
	Here for the last line, 
	we used the inequality 
	\begin{align}
		\one _{( \alpha , \infty )} (x) x \log x
		\le \frac{1}{e} + x \log x
		\qn{for all } x>0 .
	\end{align}
	Using \lref{;appli}, we see that \eqref{estgm2}
	is dominated from above by
	\begin{align}
		\alpha
		\E 
		\left[ 
		g_m \left( \rho \left( B , K \right) \right) 
		\right]
		+
		\frac{1}{\log \alpha} 
		C_h .
	\end{align}
	Combining these leads to \eqref{;estcap4}. 
	
	For the the left-hand side of \eqref{;estcap4}, we have 
	\begin{align}
		c \left( T^h (B) \in K \right) 
		\le
		\E \left[ g _m \left( \rho \left( T^h (B) , K \right) \right) \right]
		\label{estcapL}
	\end{align}
	since $\one _K ( \omega ) \le g _m ( \rho ( \omega , K) )$ 
	for all $\omega \in \Omega$. 
	On the other hand, 
	when we let $m \to \infty$, 
	$g _m \left( \rho \left( \omega , K \right) \right) $ converges to 
	$\one _K ( \omega )$ for all $\omega \in \Omega $ by the closedness of $K$, 
	and this convergence is decreasingly monotone. 
	Therefore by \cite[Theorem~31]{Denis;10} and \eqref{;char3}, 
	\begin{align}
		\lim_{m \to \infty}
		\E \left[ g_m \left( \rho \left( B,K \right) \right) \right]
		=
		\hat{\E} \left[ \one _K (B) \right]
		\le 
		\hat{c} \left( A \right) ,
		\label{estcapR}
	\end{align}
	where the inequality follows from the inclusion $K \subset A$.
	Then by \eqref{;estcap4}, \eqref{estcapL} 
	and \eqref{estcapR} with $\alpha = 1/ \sqrt{\delta}$, 
	we have for any compact subset $K \subset A$,
	\begin{align}
		c \left( T^h (B) \in K \right) 
		&\le
		\frac{1}{\sqrt{\delta}}
		\hat{c} \left( A \right)
		+
		\frac{2}{\log (1/ \delta )} C_h \\
		&< 
		\sqrt{\delta}
		+
		\frac{2}{\log (1/ \delta )} C_h
	\end{align}
	as $\hat{c} \left( A \right) < \delta$. 
	Therefore 
	\begin{align}
		c
		\left( 
		T^h (B) \in A
		\right) 
		&=
		\sup _{\theta \in \cA}
		P_{\theta} \circ \left( T^h (B) \right) ^{-1}
		(A) \\
		&=
		\sup _{\theta \in \cA}
		\sup _{
		\begin{subarray}{c}
			K \subset A \\
			K \n{: compact}
		\end{subarray}
		}
		P_{\theta} \circ \left( T^h (B) \right)^{-1}
		(K) \\
		&=
		\sup _{
		\begin{subarray}{c}
			K \subset A\\
			K \n{: compact}
		\end{subarray}
		}
		c \left( T^h (B) \in K \right) \\
		&\le
		\sqrt{\delta}
		+
		\frac{2}{\log (1/ \delta )} C_h ,
		\label{capth}
	\end{align}
	where the second line follows from the fact that 
	$P_{\theta} \circ \left( T^h (B) \right )^{-1}$ is a regular measure 
	for each $\theta \in \cA$ as $\Omega $ is a 
	complete separable metric space. 
	As the rightmost side of \eqref{capth} can be arbitrarily small 
	by letting $\delta \downarrow 0$, we conclude \eqref{abs1}. 
	Moreover, as mentioned just before \rref{nimed}, 
	$c \left( N \right)=0$ implies $\hat{c} \left( N \right) = 0$, 
	from which we have \eqref{abs2}. 
\end{proof}


\subsubsection*{Acknowledgements}
The author would like to express her sincere gratitude 
to Professor Yuu Hariya for invaluable suggestions and advice.
This work was partially supported by 
Research Fellowships of the Japan Society for the Promotion of 
Science for Young Scientists.



\begin{thebibliography}{99}
\bibitem{Billingsley;99}
	Billingsley,~P:
	\newblock Convergence of Probability Measures, second ed.
	\newblock A Wiley-Interscience Publication, New York (1999)

\bibitem{Boue;98}
	Bou\'e,~M., Dupuis,~P.:
	\newblock A variational representation for certain functionals of Brownian motion.
	\newblock Ann. Probab.\ 26, 1641--1659 (1998)

\bibitem{Denis;10}
	Denis,~L., Hu, M., Peng,~S.:
	\newblock Function spaces and capacity related to a sublinear expectation: 
		application to {$G$}-Brownian motion paths.
	\newblock Potential Anal.\ 34, 139--161 (2011)

\bibitem{Dupuis;97}
	Dupuis,~P., Ellis,~R.S.:
	\newblock A Weak Convergence Approach to the Theory of Large Deviations.
	\newblock A Wiley-Interscience Publication, New York (1997)

\bibitem{Gao;10}
	Gao,~F., Jiang,~H.:
	\newblock Large deviations for stochastic differential equations 
		driven by $G$-{B}rownian motion.
	\newblock Stochastic Process.\ Appl.\ 120, 2212--2240 (2010)

\bibitem{Osuka;11}
	Osuka,~E.:
	\newblock Girsanov's formula for $G$-Brownian motion.
	\newblock arXiv:1106.2387v2 [math.PR] 3 Oct 2011

\bibitem{Peng;06}
	Peng,~S.:
	\newblock $G$-expectation, $G$-Brownian motion 
		and related stochastic calculus of It\^o type.
	\newblock Stoch.\ Anal.\ Appl., Abel Symp.\ 2, 541--567, Springer, Berlin (2007)

\bibitem{Peng;08b}
	Peng,~S.:
	\newblock Multi-dimensional $G$-Brownian motion 
		and related stochastic calculus under $G$-expectation.
	\newblock Stochastic Process.\ Appl.\ 118, 2223--2253 (2008)

\bibitem{Peng;10}
	Peng,~S.:
	\newblock Nonlinear expectations and stochastic calculus under uncertainty
		---with robust central limit theorem and $G$-Brownian motion.
	\newblock arXiv:1002.4546v1 [math.PR] 24 Feb 2010

\bibitem{Zhang;09}
	Zhang,~X.:
	\newblock A variational representation 
		for random functionals on abstract Wiener spaces.
	\newblock J.\ Math.\ Kyoto Univ.\ 49, 475--490 (2009)

\end{thebibliography}
\end{document}